\documentclass[11pt]{article}

\usepackage[margin=1in]{geometry}
\usepackage{amsmath,amssymb,amsthm}
\usepackage{mathtools}
\usepackage{bm}
\usepackage{graphicx}
\usepackage{booktabs}
\usepackage{multirow}
\usepackage{enumitem}
\usepackage{natbib}
\usepackage{algorithm}
\usepackage{algpseudocode}
\usepackage{xcolor}
\usepackage{hyperref}
\usepackage{amsmath,amssymb,amsthm,amsfonts,mathtools}
\usepackage{bm}
\usepackage{graphicx}
\usepackage{booktabs}
\usepackage{enumitem}
\usepackage{microtype}
\usepackage{xcolor}
\usepackage{hyperref}
\usepackage[nameinlink,noabbrev]{cleveref}
\usepackage{array}
\usepackage{multirow}
\theoremstyle{plain}
\newtheorem{theorem}{Theorem}
\newtheorem{proposition}{Proposition}
\newtheorem{corollary}{Corollary}
\newtheorem{lemma}{Lemma}
\newtheorem{assumption}{Assumption}
\newtheorem{remark}{Remark}

\newcommand{\Var}{\operatorname{Var}}

\newcommand{\cF}{\mathcal{F}}

\newcommand{\IF}{\mathrm{IF}}
\newcommand{\PoF}{\mathrm{PoF}}
\newcommand{\PCS}{\mathrm{PCS}}
\newcommand{\PFS}{\mathrm{PFS}}
\newcommand{\CVaR}{\mathrm{CVaR}}

\newcommand{\SL}{\mathrm{SL}}

\newcommand{\Halmos}{\qed}

\usepackage{graphicx}
\usepackage{subfigure, epsfig}
\usepackage{natbib}

\hypersetup{
    colorlinks=true,
    linkcolor=blue,
    citecolor=blue,
    urlcolor=blue
}

\title{
\textbf{The Implementation Cost of Fairness in Service Policy Selection}
}

\author{
Junjie Liu$^{1}$, Mingjie Hu$^{2}$, Kejia Hu$^{3}$, Siyang Gao$^{1}$, Jianqiang Hu$^{2}$\\[0.8em]\small
$^{1}$Department of systems engineering, City University of Hong Kong, Hong Kong SAR, China\\ \small
$^{2}$School of Management, Fudan University, Shanghai, China\\ \small
$^{3}$Saïd Business School, University of Oxford, Oxford, United Kingdom
}

\date{}

\begin{document}

\maketitle

\begin{abstract}
Service organizations use simulation, pilot studies, and historical data
to select service policies that balance
aggregate performance and fairness. Existing work primarily evaluates the
operational cost of fairness, defined as the performance loss caused by
restricting the feasible policy set. In this research, we identify and study implementation cost as a distinct and equally important dimension of fairness, defined as the sampling effort required to verify fairness and reliably select the best fair policy. Specifically, we consider fixed-budget policy selection under fairness requirements based on
mean performance, under-service risk, quantiles, and upper tail outcomes, and show that different fairness metrics can induce the same fair policy set
and the same operational cost while requiring substantially different
amounts of evidence because their estimators have different local
verification difficulty. We further show that, near a critical fairness tolerance, the required sampling budget for implementation scales inversely with the square of the distance to the boundary, creating a finite-budget
implementation gap. These results imply that the fairness metric,
the fairness tolerance, and the available sampling budget should be designed
jointly, because a population-level fairness requirement may be operationally
attractive but not statistically implementable at the available budget.
To translate this perspective into practice, we develop a fairness-guided adaptive
allocation algorithm that directs samples toward the ranking and
fairness verification comparisons governing false selection. Experiments in
synthetic, call center, and emergency department settings demonstrate the implementation cost mechanisms identified in this work and show that the proposed allocation uses the sampling budget much more effectively.
\end{abstract}

\vspace{1em}


\section{Introduction}

Service organizations often use simulation studies, pilot
experiments, and historical data to select service policies, such as staffing, routing, scheduling,
triage, and priority policies. These policies determine not only aggregate
system performance but also how service outcomes are distributed across
customer groups. A fairness requirement therefore has an operational
consequence because it restricts the policies that an organization is
willing to deploy. At the same time, since fairness must be assessed from finite and uncertain data in practice, it also creates an evidence requirement, which in our setting is a requirement in the form of a sampling budget sufficient to estimate the relevant group-level outcomes and verify fairness reliably. Group-level means, threshold probabilities, quantiles, and tail outcomes are rarely known and must be estimated to determine whether a policy satisfies the intended fairness requirement.

Consider a hospital choosing among triage and fast-track rules using a
limited simulation study. The hospital may evaluate policies through
aggregate service performance while requiring comparable waiting time
burdens or service guarantee attainment across patient groups. Equalizing
mean waiting time, a high waiting time quantile, or an upper tail
waiting time measure may lead to the same or similar policy recommendation
in the population model. The corresponding estimators, however, use
different parts of the outcome distribution. Means aggregate information
from all observations, and extreme quantiles depend on local density and
tail measures rely disproportionately on relatively rare observations. The
same population-level policy recommendation can therefore require very
different amounts of evidence before it can be implemented with the same
statistical reliability.

This mechanism is widely observed in service operations. A call center evaluating priority and
cross-training rules may observe abundant information about average waits
while seeing relatively few extreme delays for a smaller customer group. An
emergency department may estimate overall service guarantee attainment
precisely while having much less information about severe waiting burdens
for patients requiring communication support. In both settings, managers
can specify a clear population-level parity requirement before the available
data can reliably distinguish a near-boundary fair policy from an infeasible
one. The resulting evidence requirement can delay deployment, motivate a
larger pilot or simulation study, or affect the choice among fairness
metrics that have similar operational implications.

Literature on the price of fairness formalizes the \emph{operational cost of fairness} as the loss in aggregate performance caused by restricting the feasible policy set \citep{bertsimas2011price,caragiannis2012efficiency}. That measure treats the quantities defining fairness as known. In data-driven service operations, the organization must also estimate those quantities, verify the fair policy set, and rank the policies within that set. We call the total sampling budget required to complete these tasks at a prescribed probability of correct selection the \emph{implementation cost of fairness}. This term refers to the total evidence requirement induced by a fairness specification. It is different from the incremental sample requirement relative to unconstrained policy selection, which we define separately in Section~\ref{sec:model}.

The two costs answer different practical questions. Operational cost asks
how much aggregate performance is surrendered once the fair policy set is
known. Implementation cost asks how much evidence is required to learn that
set and reliably identify its best member. These two costs do not need to be aligned.
A fairness requirement may preserve aggregate performance but be difficult
to verify, while another may impose a larger performance loss but be
statistically easier to distinguish. Price of fairness alone therefore does
not determine whether a fairness commitment can be credibly supported by a
given simulation, pilot, or historical data sampling budget.

This distinction leads to the organizing perspective of the paper. A fairness specification is \(\mathfrak f=(\psi,\kappa)\), where \(\psi\) is a distributional metric and \(\kappa\) is the tolerated cross-group disparity. \(\mathfrak f\) affects service policy selection through two distinct channels. Through the operational channel, the metric
\(\psi\) and tolerance \(\kappa\) determine the fair policy set, the best fair
policy, and the resulting price of fairness. Through the implementation
channel, they determine which distributional quantities must be estimated and
how difficult the relevant cross-group comparisons are to verify, while the
available sampling budget \(T\) determines whether those comparisons can be
resolved with the desired statistical reliability. The three design variables
therefore play distinct but interdependent roles, where the metric determines the
service outcome protected and the estimator used for verification, the
tolerance determines both the accepted disparity and the distance to the
fairness boundary, and the budget determines the evidence available to resolve
the relevant comparisons. Consequently, a fairness specification may be
operationally attractive but not statistically implementable at the available
budget, and operationally equivalent specifications may require substantially
different budgets. Accordingly, \(\psi\), \(\kappa\), and \(T\)
should be designed jointly instead of being treated as sequential and independent service design decisions.

Statistically, fair policy selection under finite evidence is a joint
ranking and verification problem. Ranking error arises when a policy is empirically classified as fair and appears preferable to the true best fair policy, and verification error arises when sampling noise changes the estimated
fairness feasibility of the best fair policy.
The metric determines the estimator used in the cross-group comparisons,
the tolerance determines their distance from the feasibility boundary, and
the allocation of the available budget determines how accurately the
relevant ranking and verification comparisons can be resolved. This
structure also explains why uniform sampling and allocation rules designed
only for objective ranking can be inefficient for fair policy selection.

Our central research question is how the choice of a fairness metric and
tolerance interacts with a limited sampling budget to determine whether the
best fair service policy can be reliably identified and implemented. We study a fixed-budget setting with finitely many candidate policies and multiple customer groups. The model covers mean performance, under-service risk, high quantile, and upper tail parity requirements. It also permits the output used for aggregate performance to differ from, and be observed jointly with, the output used for fairness assessment. This feature connects the theory to service systems whose objectives combine waiting, abandonment, service attainment, and policy complexity.

The paper makes three contributions. First, we explicitly analyze two costs associated with fairness in service policy selection. Existing research has focused primarily on the operational cost of fairness, captured by the price of fairness. We additionally define the implementation cost of fairness as the total sampling budget required to verify the fair policy set and reliably identify its best policy at a target probability of correct selection (PCS). This distinction allows managers to compare the performance loss induced by a fairness specification with the evidence required to support it. We also distinguish this total implementation requirement from the incremental evidence attributable to fairness relative to unconstrained policy selection. When the required evidence exceeds the available sampling budget, managers can respond by increasing the evaluation budget, adjusting the tolerance, or comparing substantively appropriate metrics with different verification difficulty.

Second, we identify two mechanisms that generate high implementation cost. Under common-shape location-shift models, translation-equivariant metrics can induce the same fair policy set and price of fairness, while their influence function variances generate different local verification hardness. Near a critical fairness tolerance, the budget required for a target PCS grows proportionally to \((\kappa-\kappa_0)^{-2}\), where \(\kappa\) denotes the operational fairness tolerance and \(\kappa_0\) denotes the corresponding critical fairness tolerance. Equivalently, a fixed budget leaves an implementation gap of order \(\sqrt{\log(1/\delta)/T}\) around the critical boundary, where \(\delta\) is the target probability of false selection and \(T\) is the total sampling budget. These results explain when operationally similar fairness requirements demand different amounts of evidence.

Third, we translate the fairness perspective into a
fairness-guided allocation algorithm for the joint ranking and verification
problem. Given a fairness metric, tolerance, and fixed sampling budget, the
algorithm directs samples toward comparisons that are difficult because
policies are close in objective value or because a cross-group fairness gap
is close to the tolerance. It thus provides a practical
mechanism for using the available sampling budget efficiently. The proposed method is also evaluated by numerical experiments, where the synthetic examples demonstrate the implementation cost mechanisms identified in this work, and the
call center and emergency department examples demonstrate how the proposed allocation algorithm performs in service systems.

Our analysis also has a direct managerial implication for achieving fairness. It suggests that a service
organization should design its fairness metric, fairness tolerance, and
available sampling budget jointly. The appropriate metric should reflect
the aspect of service experience the organization intends to protect, but
its statistical requirements must also be recognized. The tolerance should
reflect the disparity the organization is willing to accept, while also
accounting for the evidence needed to distinguish policies near the
fairness boundary. The available budget, in turn, determines which
commitments can be supported with the desired reliability. An attractive
population-level fairness requirement may therefore remain statistically
unreliable at the available budget, while an operationally similar
requirement may be implemented with substantially less evidence.

The remainder of the paper develops this perspective in sequence. Section~\ref{sec:literaure} reviews the related literature. Section~\ref{sec:model} introduces the fair policy selection problem. Section~\ref{sec:equiv} compares the operational and statistical implications of fairness specifications. Section~\ref{sec:wedge} studies the finite-budget implementation gap. Section~\ref{sec:pro} develops the sampling algorithm. Section~\ref{sec:numerics} reports the numerical experiments. Section~\ref{sec:conclu} concludes the paper.

\section{Related Literature}\label{sec:literaure}

This research is related to three streams of literature.

The first stream is fairness in service operations. Fairness is central in systems where customers, patients, or jobs experience congestion, priority rules, and unequal access to service capacity. \citet{larson1987perspective} explains how physical waiting, perceived justice, and the social context of delay jointly shape queueing experiences. \citet{aviitzhak2008quantifying} develops formal queueing fairness metrics, while \citet{gans2003telephone} and \citet{aksin2007modern} review call center decisions that affect waiting, service performance, and service attainment. In emergency department operations, \citet{mandelbaum2012fair} studies fairness in routing patients to hospital wards. The price-of-fairness framework \citep{bertsimas2011price,caragiannis2012efficiency} quantifies the aggregate performance loss induced by equity constraints. \citet{zhu2026selection} study best policy selection under subpopulation fairness constraints, requiring the selected policy to satisfy minimum performance thresholds across all prespecified subpopulations. Collectively, this literature clarifies how fairness requirements shape service priorities, feasible policies, and attainable performance. What remains unaddressed is the implementation cost of fairness as a distinct counterpart to its operational cost. We define this implementation cost as the sampling budget required to estimate the distributional quantities that define fairness, verify policy feasibility, and reliably identify the best fair policy. We study how the fairness specification determines this second cost of fairness and compare it with the operational cost.

The second stream is distributional performance metrics in service systems. Service organizations often evaluate system performance using more than averages. In call centers, service performance targets and waiting time distributions are main operational metrics, and customer experience may depend on information, delay variability, and tail outcomes in addition to mean delays \citep{koole2004performance, avramidis2005modeling, aksin2007modern, roubos2012service}. The statistical properties of such distributional metrics are well known to differ across metrics. Sample means are smooth functionals, threshold-based service metrics like under-service risk depend on the probability of service failure, and quantiles depend on the density near the target quantile \citep{bahadur1966note,koenker1978regression}. Upper tail performance metrics
such as CVaR have been studied in risk optimization and simulation
\citep{rockafellar2000optimization,hong2009cvar,hong2011varcvar,
gordy2010nested}. The broader algorithmic fairness literature also
shows that different fairness criteria encode different distributional
objects, as in individual-similarity-based notions of fairness
\citep{dwork2012fairness} and group parity notions for classification
errors \citep{hardt2016equality}. We consider this metric dependence in service policy selection. The selected fairness specification determines the estimator used in each cross-group comparison and therefore changes the PCS for the best feasible policy, even when the induced fair policy set is unchanged. Thus, the metric choice affects not only the service outcomes an organization seeks to protect, but also the amount of data needed to identify a fair policy with confidence.

The third stream is fixed-budget ranking and selection (R\&S) in
simulation optimization. This line of research studies how to allocate a
limited sampling budget across a finite set of alternatives to
maximize the PCS for the best alternative
\citep{chen2011stochastic}. The closely related fixed-budget best arm identification (BAI) studies the analogous
problem in multi-armed bandits. A representative fixed-budget R\&S
method is the optimal computing budget allocation (OCBA), which approximates
the PCS and allocates more samples to
alternatives that are close to the current best or have larger sample
variances \citep{chen2000simulation}. Other common methods include Bayesian value of 
information rules such as expected improvement and knowledge gradient
\citep{jones1998efficient,frazier2008knowledge,frazier2009knowledge},
and BAI algorithms such as successive rejects and top-two sampling
\citep{audibert2010best,russo2016simple}. Most of this literature
focuses on unconstrained selection with a single objective and known
feasibility, while constrained R\&S extends the problem to settings in
which feasibility is itself estimated from simulation
\citep{lee2012approximate}.

Our problem shares the estimated feasibility feature of constrained R\&S, but its fairness structure creates additional coupling. Feasibility here corresponds to cross-group comparisons of distributional functionals. A replication can jointly inform the aggregate objective and several fairness comparisons. The metric changes the local variance of those comparisons, and the tolerance is itself a design variable that changes the distance to the feasibility boundary. These features produce a coupled ranking and verification problem with a metric-specific statistical structure.

\section{Fair Policy Selection and the Costs of
Fairness}
\label{sec:model}

This section introduces the service policy selection model and fairness specification, and then defines the two costs analyzed in the paper, namely the operational cost and the implementation cost of fairness.

\subsection{Policy and System Performance}

We consider a decision maker who selects one policy from a finite candidate set \(\mathcal K=\{1,\ldots,K\}\) for a service system with customer groups \(\mathcal G=\{1,\ldots,G\}\). Group \(g\) receives a nonnegative system weight \(w_g\), with \(\sum_{g\in\mathcal G}w_g=1\).

For each policy-group pair \((k,g)\), one independent sampling replication produces a jointly observed pair \((Y_{kg},X_{kg})\). The objective output \(Y_{kg}\) records the contribution of that group to aggregate performance. It may combine several operational outcomes and policy penalties. The fairness output \(X_{kg}\) records the service outcome used to assess cross-group parity. Let \(R_{kg}\) and \(P_{kg}\) denote the marginal distributions of \(Y_{kg}\) and \(X_{kg}\), respectively. Dependence between \(Y_{kg}\) and \(X_{kg}\) within a replication is allowed. Across replications and policy-group pairs, the sampling streams are independent, and $(Y_{kg,1},X_{kg,1}),(Y_{kg,2},X_{kg,2}),\ldots$
are identically distributed copies of \((Y_{kg},X_{kg})\).

Denote \(\mu_{kg}=\mathbb E[Y_{kg}]\). The aggregate performance of policy \(k\) is
\begin{equation}\label{eq:objective}
\mu_k=\sum_{g\in\mathcal G}w_g\,\mathbb E[Y_{kg}].
\end{equation}
Larger values of \(\mu_k\) are preferred. A policy-specific complexity cost can be included in each \(Y_{kg}\). Since the group weights sum to one, this representation covers the additive complexity penalties used in the experiments. The scalar model with \(Y_{kg}=X_{kg}\) is an important special case. The joint output formulation also covers the service system experiments, where the objective combines waiting, abandonment, service attainment, and complexity while the fairness constraint is based on waiting time.

For analytical clarity, the main theory treats \(X_{kg,i}\) as a scalar observation. A simulation replication may instead return a finite within-replication batch, such as all group-specific customer waits in one simulated day. In that case, the independent sampling unit is the replication, and the scalar influence function term is replaced by its replication-level batch contribution. The same allocation analysis applies when these contributions satisfy Assumption~\ref{ass:lal}.

\subsection{Fairness Specifications}

Let \(\psi(P)\) be a distributional metric that maps a fairness outcome distribution \(P\) to a real-valued summary. The metric determines the operational feature compared across groups and the estimator used for verification. We consider four representative fairness metrics.
\begin{itemize}
    \item \textbf{Mean performance parity.} The metric is \(\psi(P)=\mathbb E[X]\).
    \item \textbf{Under-service risk parity.} For a service threshold \(s\), the metric is \(\psi(P)=\mathbb P(X\le s)\).
    \item \textbf{High quantile parity.} For \(\beta\in(0,1)\), the metric is
    \[
    \psi(P)=Q_\beta(P)=\inf\{x\mid\mathbb P(X\le x)\ge\beta\}.
    \]
    \item \textbf{Upper tail parity.} We use the upper tail CVaR convention
    \[
    \psi(P)=\CVaR_\beta^+(P)
    =\inf_{z\in\mathbb R}\left\{z+\frac{1}{1-\beta}\mathbb E[(X-z)_+]\right\}.
    \]
    For a continuous distribution, this equals \(Q_\beta(P)+(1-\beta)^{-1}\mathbb E[(X-Q_\beta(P))_+]\). It represents the upper tail average. Thus it describes the worst tail when \(X\) is a service burden such as waiting time and the best tail when \(X\) is a performance measure.
\end{itemize}

A \emph{fairness specification} is a pair \(\mathfrak f=(\psi,\kappa)\), where \(\psi\) is the selected metric and \(\kappa\ge0\) is the tolerated disparity. Policy \(k\) is fairly feasible under \(\mathfrak f\) when
\begin{equation}\label{eq:fairness_feasible}
\max_{g<g'}|\psi(P_{kg})-\psi(P_{kg'})|\le\kappa.
\end{equation}
Define its fairness score and the induced fair policy set by $
\varepsilon_k^{\mathfrak f}=\max_{g<g'}|\psi(P_{kg})-\psi(P_{kg'})|$ and $\mathcal F_{\mathfrak f}=\{k\in\mathcal K\mid\varepsilon_k^{\mathfrak f}\le\kappa\}$ respectively.
The best fair policy is
\begin{equation}\label{eq:kstar}
k_{\mathfrak f}^\star\in\arg\max_{k\in\mathcal F_{\mathfrak f}}\mu_k.
\end{equation}

To simplify exposition, we assume throughout that $\mathcal F_{\mathfrak f}\neq\emptyset$ and that the best fair policy is unique. 
Moreover, we assume that all pairwise disparities in the metric values are 
nonboundary, i.e., 
\(
|\psi(P_{kg}) - \psi(P_{kg'})| \neq \kappa
\)
for every policy $k$ and every pair of groups $(g,g')$ under consideration.

\begin{remark}
The pairwise parity constraints provide a transparent representation of group fairness and expose the comparisons that determine sampling difficulty. Network-based and multi-criterion constraints can also be incorporated by replacing the set of group-pair comparisons.
\end{remark}

\subsection{Policy Selection}

Let \(N_{kg}\) be the number of replications allocated to pair \((k,g)\), and let \(\bar Y_{kg}\) be the sample mean of the objective outputs. For \(N_{kg}\ge1\), define $\widehat P_{kg}=\frac{1}{N_{kg}}\sum_{i=1}^{N_{kg}}\delta_{X_{kg,i}}$ and $\widehat\mu_k=\sum_{g\in\mathcal G}w_g\bar Y_{kg}$.
The empirical fair policy set is
\begin{equation}\label{eq:emp-feasible}
\widehat{\mathcal F}_{\mathfrak f}
=\left\{k\in\mathcal K\mid\max_{g<g'}|\psi(\widehat P_{kg})-\psi(\widehat P_{kg'})|\le\kappa\right\}.
\end{equation}

The total budget is fixed at \(T\), with \(\sum_{k,g}N_{kg}=T\), and the allocation proportions are \(\alpha_{kg}=N_{kg}/T\). If \(\widehat{\mathcal F}_{\mathfrak f}\) is nonempty, the plug-in rule selects
\begin{equation}\label{eq:plugin}
\widehat k_{\mathfrak f,T}\in\arg\max_{k\in\widehat{\mathcal F}_{\mathfrak f}}\widehat\mu_k.
\end{equation}
If the empirical fair set is empty, the rule selects the policy with the smallest estimated fairness score, using estimated aggregate performance and then the policy index to break ties. This fallback makes the decision rule defined for every sample path and becomes irrelevant asymptotically under the previously stated regularity assumptions, namely a unique best fair policy and no pairwise metric disparity exactly equal to the tolerance.

Let \(\Pi(T)\) denote the admissible sampling allocation rules paired with the fixed plug-in selection rule in \eqref{eq:plugin} with budget \(T\). For \(\pi\in\Pi(T)\), define
\begin{align*}
\PCS_{\mathfrak f}^{\pi}(T)&=\mathbb P_{\pi}(\widehat k_{\mathfrak f,T}=k_{\mathfrak f}^\star),\\
\PCS_{\mathfrak f}(T)&=\sup_{\pi\in\Pi(T)}\PCS_{\mathfrak f}^{\pi}(T).
\end{align*}
Thus, the unadorned \(\PCS_{\mathfrak f}(T)\) is the best attainable PCS at budget \(T\). When a specific allocation or algorithm is analyzed, we use a superscript when the distinction matters and suppress it only after the rule has been fixed. We write the probability of false selection as \(\PFS=1-\PCS\) with matching superscripts.

\subsection{The Operational and Implementation Costs of Fairness}

The operational cost is the objective loss caused by the fair policy set. We measure it through the price of fairness
\begin{equation}\label{eq:pof}
\PoF_{\mathfrak f}=V_0-V_{\mathfrak f},
\end{equation}
where $V_0=\max_{k\in\mathcal K}\mu_k $ and $V_{\mathfrak f}=\max_{k\in\mathcal F_{\mathfrak f}}\mu_k$.

For a target PCS level \(1-\delta\), the \emph{implementation cost of fairness} is
\begin{equation}\label{eq:burden}
B_{\mathfrak f}(\delta)=\inf\{T\in\mathbb R_+\mid \PCS_{\mathfrak f}(T)\ge1-\delta\}.
\end{equation}
We use this term for the total evidence requirement induced by the fairness specification, including the samples needed for objective ranking. It is not restricted to the incremental requirement relative to unconstrained selection. If \(B_0(\delta)\) denotes the corresponding unconstrained implementation cost, the incremental evidence attributable to the specification is
\[
\Delta B_{\mathfrak f}(\delta)=B_{\mathfrak f}(\delta)-B_0(\delta).
\]
At target PCS level $1-\delta$, we say that a fairness specification is statistically implementable under the available budget \(T\) whenever $B_{\mathfrak f}(\delta)\le T$. We use the term \emph{implementable fairness} to describe this finite evidence perspective.

For a specific algorithm \(\mathcal A\), its algorithm-specific cost is
\[
B_{\mathfrak f}^{\mathcal A}(\delta)
=\inf\{T\in\mathbb R_+\mid \PCS_{\mathfrak f}^{\mathcal A}(T)\ge1-\delta\}.
\]
The numerical experiments report these algorithm-specific crossing budgets. This notation separates the theoretical evidence benchmark from the budget attained by FGA or a comparison procedure.

\subsection{The Local Error Structure of Fairness Verification}
\label{subsec:local_geometry}

To analyze the statistical difficulty of fairness verification, we use
the standard local asymptotic linear (LAL) representation of plug-in
estimators. Denote the influence function of a distributional metric \(\psi\)
at distribution \(P_{kg}\) by
\(\IF_\psi(\cdot;P_{kg})\). It describes the first-order effect of a single
outcome on the value of the estimated metric. Formally, when the
derivative exists,
\begin{equation}
\label{eq:if_def}
\IF_\psi(x; P_{kg})
=
\lim_{\varepsilon \to 0}
\frac{
\psi\left((1-\varepsilon)P_{kg}+\varepsilon\delta_x\right)
-
\psi(P_{kg})
}{\varepsilon}.
\end{equation}

\begin{assumption}
\label{ass:lal}
For each policy-group pair \((k,g)\), if the distribution \(P_{kg}\) lies
in a metric-based regularity class specified in Section \ref{app:lal-proof}, the plug-in estimator
\(\psi(\widehat P_{kg})\) admits the expansion
\begin{equation}
\label{eq:lal}
\sqrt{N_{kg}}
\left(
\psi(\widehat P_{kg})-\psi(P_{kg})
\right)
=
\frac{1}{\sqrt{N_{kg}}}
\sum_{i=1}^{N_{kg}}
\IF_\psi(X_{kg,i};P_{kg})
+
o_p(1),
\end{equation}
where $\mathbb{E}
\left[
\IF_\psi(X_{kg};P_{kg})
\right]
=
0$ and $\sigma_{\psi,kg}^2
=
\Var
\left(
\IF_\psi(X_{kg};P_{kg})
\right)
\in(0,\infty)$.
\end{assumption}

Assumption \ref{ass:lal} requires that the plug-in estimator of the
fairness metric have a first-order influence function expansion with
finite nonzero asymptotic variance. This result holds for the mean, under-service risk, high quantile, and upper tail performance metrics considered in this research when the distribution lies in the corresponding regularity class, as verified in Section \ref{app:lal-proof}.

We next define two fundamental quantities that will be used in subsequent analysis. The \textit{fairness gap}
\(d_{k,gg'}^\psi\) is the true difference in the fairness metric between groups
\(g\) and \(g'\) under policy \(k\).
\begin{equation}\label{eq:gap}
d_{k,gg'}^\psi = \psi(P_{kg}) - \psi(P_{kg'}).
\end{equation}
The \textit{fairness slack} \(\Delta_{k,gg'}^\psi(\kappa)\) quantifies the 
difference between the maximum allowable fairness disparity \(\kappa\) and the
absolute fairness gap \(|d_{k,gg'}^\psi|\) for policy \(k\) and group pair
\((g,g')\).
\begin{equation}\label{eq:slack}
\Delta_{k,gg'}^\psi(\kappa) = \kappa - |d_{k,gg'}^\psi|.
\end{equation}
Here positive slack means that the corresponding group-pair comparison
satisfies the fairness constraint, while negative slack indicates a
violation. Its magnitude captures the distance to the fairness boundary,
so near-zero slack identifies the comparisons that are hardest to verify.

For a fixed allocation \(\bm{\alpha}\), define the local asymptotic
variance of the estimated group disparity between groups \(g\) and
\(g'\) under policy \(k\) as
\begin{equation}
\label{eq:V-before-prop}
V_{k,gg'}^\psi(\bm{\alpha})
=
\frac{\sigma_{\psi,kg}^2}{\alpha_{kg}}
+
\frac{\sigma_{\psi,kg'}^2}{\alpha_{kg'}},
\end{equation}
whenever \(\alpha_{kg}>0\) and \(\alpha_{kg'}>0\). We use the convention
\[
V_{k,gg'}^\psi(\bm{\alpha})=+\infty
\quad
\text{if }
\alpha_{kg}=0
\text{ or }
\alpha_{kg'}=0.
\]

\begin{proposition}
\label{prop:local}
Fix a policy \(k\) and two groups \(g\) and \(g'\). Suppose Assumption \ref{ass:lal} holds. Then, as
\(T\to\infty\),
\begin{equation}
\label{eq:local-clt}
\sqrt{T}
\Big[
\big(\psi(\widehat P_{kg})-\psi(\widehat P_{kg'})\big)
-
d_{k,gg'}^\psi
\Big]
\Rightarrow
\mathcal N
\left(
0,
V_{k,gg'}^\psi(\bm{\alpha})
\right).
\end{equation}
If the pair is non-boundary, i.e.,
\(\Delta_{k,gg'}^\psi(\kappa)\neq 0\), then the local Gaussian
approximation gives
\begin{equation}
\label{eq:local-rate}
\mathbb P
\left(
\operatorname{sgn}\{\Delta_{k,gg'}^\psi(\kappa)\}
\left[
\kappa-
\left|
\psi(\widehat P_{kg})-\psi(\widehat P_{kg'})
\right|
\right]
<0
\right)
\approx
\exp
\left\{
-
T
\frac{
\left(\Delta_{k,gg'}^\psi(\kappa)\right)^2
}{
2V_{k,gg'}^\psi(\bm{\alpha})
}
\right\}.
\end{equation}
Accordingly, the local verification hardness index is
\begin{equation}
\label{eq:hardness}
H_{k,gg'}^\psi(\kappa;\bm{\alpha})
=
\frac{
V_{k,gg'}^\psi(\bm{\alpha})
}{
\left(\Delta_{k,gg'}^\psi(\kappa)\right)^2
}.
\end{equation}
\end{proposition}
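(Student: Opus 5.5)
The plan has three steps: a joint CLT for the estimated disparity, a reduction of the misclassification event to a one-sided tail event, and a Gaussian tail evaluation whose exponent gives the hardness index \eqref{eq:hardness}.

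\textbf{Step 1: the CLT \eqref{eq:local-clt}.} Fix the allocation proportions, so that \(N_{kg}=\alpha_{kg}T\) with \(\alpha_{kg},\alpha_{kg'}>0\). If either proportion is zero, \(V^\psi_{k,gg'}(\bm\alpha)=+\infty\) by convention and there is nothing to prove. Apply Assumption~\ref{ass:lal} to each of the pairs \((k,g)\) and \((k,g')\). Multiplying \eqref{eq:lal} by \(\sqrt{T/N_{kg}}=\alpha_{kg}^{-1/2}\) gives
\[
\sqrt T\big(\psi(\widehat P_{kg})-\psi(P_{kg})\big)=\alpha_{kg}^{-1/2}\,\frac{1}{\sqrt{N_{kg}}}\sum_{i=1}^{N_{kg}}\IF_\psi(X_{kg,i};P_{kg})+o_p(1).
\]
The normalized sum is an i.i.d.\ mean-zero sum with variance \(\sigma^2_{\psi,kg}\in(0,\infty)\). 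By the classical Lindeberg--L\'evy CLT it converges to \(\mathcal N(0,\sigma^2_{\psi,kg})\), and the \(o_p(1)\) remainder is removed by Slutsky. The same holds for \(g'\). Because the sampling streams of distinct policy-group pairs are independent, the two limits are independent, so the joint vector converges to a product Gaussian. The continuous mapping theorem applied to the difference then gives \eqref{eq:local-clt}, with variance \(\sigma^2_{\psi,kg}/\alpha_{kg}+\sigma^2_{\psi,kg'}/\alpha_{kg'}=V^\psi_{k,gg'}(\bm\alpha)\). If the allocation is adaptive rather than fixed, I would require \(N_{kg}/T\to\alpha_{kg}\) in probability and invoke an Anscombe-type random-index CLT. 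This is a routine modification.

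\textbf{Step 2: reduce the misclassification event to a one-sided tail event.} Write \(\hat d=\psi(\widehat P_{kg})-\psi(\widehat P_{kg'})\) and \(d=d^\psi_{k,gg'}\), and without loss of generality take \(d\ge0\).

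In the case \(\Delta>0\), i.e.\ \(|d|<\kappa\), the error event is \(|\hat d|>\kappa\). This is the union of \(\hat d-d>\kappa-d=\Delta\) and \(\hat d-d<-\kappa-d\). The second deviation has size \(\kappa+d\ge\Delta\), so it is dominated at the exponential scale.

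In the case \(\Delta<0\), i.e.\ \(d>\kappa\), the error event is \(|\hat d|\le\kappa\). It is contained in \(\hat d-d\le\kappa-d=\Delta=-|\Delta|\).

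In both cases the error probability is, to leading order, \(\mathbb P(\sqrt T(\hat d-d)\text{ deviates by }\sqrt T|\Delta|)\) on one side.

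\textbf{Step 3: Gaussian tail evaluation.} Replace the law of \(\sqrt T(\hat d-d)\) by \(\mathcal N(0,V)\) using Step 1. The Mills-ratio bound gives
\[
\bar\Phi\big(\sqrt T|\Delta|/\sqrt V\big)=\exp\{-T\Delta^2/(2V)\,(1+o(1))\},
\]
which is \eqref{eq:local-rate} in the logarithmic sense that ``\(\approx\)'' denotes. The exponent is \(T/(2H^\psi_{k,gg'})\), which motivates \eqref{eq:hardness}: hardness is variance divided by squared slack.

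\textbf{Main obstacle.} The CLT controls deviations of order \(T^{-1/2}\), while \(|\Delta|\) is fixed, so \(\sqrt T|\Delta|\to\infty\) lies outside the CLT's range of validity. A rigorous version of \eqref{eq:local-rate} would need moderate-deviation control, for example Cram\'er-type moderate deviations for the linear term plus exponentially small remainder bounds, or a local regime \(\Delta=\Delta_T\to0\) with \(\sqrt T\Delta_T\to\infty\) slowly. I would state the result as in the proposition: an approximation justified by the local Gaussian limit with exponent read off from \(\Delta^2/V\). I would add a remark that the exact large-deviation rate function agrees with \(\Delta^2/(2V)\) to second order as \(\Delta\to0\), which is the near-boundary regime that matters for Section~\ref{sec:wedge}.
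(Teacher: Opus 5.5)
Your proposal is correct and follows essentially the same route as the paper. You rescale each group's expansion from Assumption~\ref{ass:lal} by $\alpha_{kg}^{-1/2}$ and use the CLT plus independence of the sampling streams to get \eqref{eq:local-clt}; you then replace the estimated disparity by its Gaussian surrogate and read off the tail exponent at distance $|\Delta^\psi_{k,gg'}(\kappa)|$ from the nearest point of the misclassification region, for both signs of the slack, where your one-sided reduction and Mills-ratio step make precise the paper's ``closest distance'' and ``standard Gaussian tail approximation'' steps. Your caveat that the CLT cannot by itself deliver a fixed-slack exponential rate matches the paper's own position, which is why it treats \eqref{eq:local-rate} only as an approximation and imposes Assumption~\ref{ass:local-rate} separately wherever the exponent is used as a rate (your side remark on second-order agreement with the exact rate function also needs exponential moments of the influence-function terms, which fail, e.g., for mean or CVaR parity under the lognormal noise used in the experiments).
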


The central limit theorem in Proposition~\ref{prop:local} establishes the local variance but does not, by itself, establish a fixed-slack large deviation exponent. When technical parts later (Sections~\ref{sec:wedge} and~\ref{sec:pro}) use \eqref{eq:local-rate} as an exponential rate, we impose the following local rate condition.

\begin{assumption}
\label{ass:local-rate}
For each nonboundary comparison \((k,g,g')\) entering the rate analysis, define
\[
A_{T,k,gg'}
=
\left\{
\mathbf 1\!\left(
\left|\psi(\widehat P_{kg})-\psi(\widehat P_{kg'})\right|
\le\kappa
\right)
\neq
\mathbf 1\!\left(
\left|\psi(P_{kg})-\psi(P_{kg'})\right|
\le\kappa
\right)
\right\}.
\]
As \(T\to\infty\),
\[
-\frac{1}{T}\log\mathbb P(A_{T,k,gg'})
=
\frac{1}{2H_{k,gg'}^\psi(\kappa;\bm\alpha)}
+o(1),
\]
uniformly for
\(\alpha_{kg},\alpha_{kg'}\ge\underline{\alpha}>0\), where \(T\) is the
total sampling budget and \(H_{k,gg'}^\psi(\kappa;\bm\alpha)\) is the
corresponding local verification hardness index.
\end{assumption}

This condition is exact for Gaussian mean comparisons and serves as a
high-level rate condition for the other metrics. The local hardness
comparisons rely only on Assumption~\ref{ass:lal}.

Proposition \ref{prop:local} converts a distributional fairness
requirement into a local testing problem. For a fixed policy-group pair, the relevant uncertainty is not the uncertainty of either group metric alone, but the uncertainty of their difference. 
The allocation vector enters only through the variance term
\(V_{k,gg'}^\psi(\bm{\alpha})\), while the fairness tolerance enters only through the distance to the boundary
\(\Delta_{k,gg'}^\psi(\kappa)\). This decomposition is useful because it
puts different fairness specifications, different group pairs, and different sampling allocations on a common local scale.

\section{Metric-Based Local Verification Hardness Comparisons}
\label{sec:equiv}

This section compares the fairness specifications considered in this paper under the location-shift model and establishes a key foundation for our analysis that fairness specifications with similar operational costs can have very different local verification hardness indices, which may lead to very different implementation costs. This establishes a setting in which equal operational costs lead to different implementation costs.

\subsection{Translation-Equivariant Metrics}

To facilitate the presentation, we first consider a broad class of fairness metrics that behave uniformly when
service performance outcomes shift by a constant, and refer to them as
\emph{translation-equivariant} metrics. Formally, a distributional metric
\(\psi\) is translation-equivariant if, for any distribution \(P\) and constant
\(a\),
\[
\psi(P+a)=\psi(P)+a,
\]
where \(P+a\) denotes the distribution of \(X+a\) when \(X\sim P\). Common
operational metrics such as the mean, quantiles, and
\(\mathrm{CVaR}\) all fall into this category.

To understand how these metrics compare, consider a standard operational
environment where the outcome for group \(g\) under policy \(k\) follows a location-shift model.
\begin{equation}
\label{eq:location-family}
X_{kg} = \theta_{kg} + Z_k.
\end{equation}
Here, \(\theta_{kg}\) represents the intrinsic group-specific location
parameter, and \(Z_k\) captures the basic stochastic noise of the system
under policy \(k\), which is assumed to have an identical shape across groups.

Our first major result establishes that under this common structural property,
all translation-equivariant metrics with specified fairness tolerances lead to the exact same operational
costs, but impose different local verification hardness indices.

\begin{theorem}
\label{thm:same-frontier}
Suppose the outcome follows the location-shift model in
\eqref{eq:location-family}. Let \(\psi_1\) and \(\psi_2\) be any two
translation-equivariant fairness metrics. Then, the measured fairness gap
between any two groups \(g\) and \(g'\) is exactly the difference in their
location parameters.
\begin{equation}
\label{eq:gap-location}
\psi_j(P_{kg}) - \psi_j(P_{kg'})
=
\theta_{kg} - \theta_{kg'},
\qquad
\text{for } j=1,2.
\end{equation}
Moreover, both metrics induce identical fair policy sets, identical best achievable objective values, and the same price of fairness.
\begin{equation}
\label{eq:same-feasible}
\mathcal{F}_{(\psi_1,\kappa)}
=
\mathcal{F}_{(\psi_2,\kappa)},
\quad
V_{(\psi_1,\kappa)}
=
V_{(\psi_2,\kappa)},
\quad
\text{PoF}_{(\psi_1,\kappa)}
=
\text{PoF}_{(\psi_2,\kappa)}.
\end{equation}
However, if Assumption \ref{ass:lal} holds, the relative local verification hardness index
between the two parities is determined by the ratio of their influence function
variances, \(\sigma_{\psi,kg}^2=\tau_{\psi,k}^2\).
\begin{equation}
\label{eq:ratio-hardness}
\frac{
H_{k,gg'}^{\psi_1}(\kappa;\bm{\alpha})
}{
H_{k,gg'}^{\psi_2}(\kappa;\bm{\alpha})
}
=
\frac{\tau_{\psi_1,k}^2}{\tau_{\psi_2,k}^2}.
\end{equation}
\end{theorem}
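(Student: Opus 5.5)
The plan is to prove the three claims in order: the gap identity, then the operational equalities, then the hardness ratio. Each step rests on translation equivariance together with the fact that $Z_k$ does not depend on $g$.

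\emph{Gap identity.} Let $Q_k$ denote the distribution of $Z_k$. Under \eqref{eq:location-family}, $P_{kg}=Q_k+\theta_{kg}$. Translation equivariance then gives $\psi_j(P_{kg})=\psi_j(Q_k)+\theta_{kg}$ for $j=1,2$. Subtracting the corresponding expression for $g'$ cancels the common term $\psi_j(Q_k)$, which yields \eqref{eq:gap-location}.

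\emph{Operational equalities.} By the gap identity, the fairness score $\varepsilon_k^{(\psi_j,\kappa)}=\max_{g<g'}|\theta_{kg}-\theta_{kg'}|$ does not depend on $j$. Hence $\mathcal F_{(\psi_1,\kappa)}=\mathcal F_{(\psi_2,\kappa)}$. The objective values $\mu_k$ in \eqref{eq:objective} depend only on the laws of the $Y_{kg}$, not on $\psi$, so $V_{\mathfrak f}=\max_{k\in\mathcal F_{\mathfrak f}}\mu_k$ coincides for the two metrics. Since $V_0$ is metric-free, \eqref{eq:pof} gives equal price of fairness. As a byproduct, the slack $\Delta^{\psi_j}_{k,gg'}(\kappa)=\kappa-|\theta_{kg}-\theta_{kg'}|$ is also metric-independent. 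This is the key input for the hardness ratio.

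\emph{Hardness ratio.} I first show that the influence function variance does not depend on the group. From \eqref{eq:if_def}, if $P_{kg}=Q_k+\theta$, then $(1-\varepsilon)P_{kg}+\varepsilon\delta_x$ is the translate by $\theta$ of $(1-\varepsilon)Q_k+\varepsilon\delta_{x-\theta}$. Equivariance then gives $\IF_\psi(x;P_{kg})=\IF_\psi(x-\theta_{kg};Q_k)$. Since $X_{kg}-\theta_{kg}\overset{d}{=}Z_k$, we get $\sigma^2_{\psi,kg}=\Var(\IF_\psi(Z_k;Q_k))=:\tau^2_{\psi,k}$ for every $g$.

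Substituting into \eqref{eq:V-before-prop} gives $V^{\psi}_{k,gg'}(\bm\alpha)=\tau^2_{\psi,k}(1/\alpha_{kg}+1/\alpha_{kg'})$, which factors into a metric part times an allocation part. Dividing the two hardness indices \eqref{eq:hardness}, the common slack squared and the common factor $1/\alpha_{kg}+1/\alpha_{kg'}$ cancel, leaving \eqref{eq:ratio-hardness}. This requires $\alpha_{kg},\alpha_{kg'}>0$ and a nonboundary pair, so the ratio is well defined. If an allocation is zero, both indices are infinite; I will note this as excluded.

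\emph{Main obstacle.} The delicate step is the influence-function translation identity. It needs the mixture-translation commutation above, and it needs the regularity class in Assumption~\ref{ass:lal} to be closed under translation, so that $\tau^2_{\psi,k}$ is well defined and finite. I will check this closure directly for the mean, quantile (density positivity at the quantile shifts along with it), and CVaR classes rather than argue abstractly.
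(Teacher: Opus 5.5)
Your proposal is correct and follows the paper's proof essentially step for step. The steps match in order: the gap identity from translation equivariance, then metric-independent fairness scores and slacks, then the influence-function identity $\IF_\psi(x;P_{kg})=\IF_\psi(x-\theta_{kg};Q_k)$ obtained by commuting the contamination mixture with translation, and finally cancellation of the common slack and allocation factor for positive allocations. Your extra check that the regularity classes are closed under translation does no harm but is not needed, since Assumption~\ref{ass:lal} already gives $\sigma^2_{\psi,kg}\in(0,\infty)$ for every pair and the identity then forces these variances to coincide across groups.
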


Theorem \ref{thm:same-frontier} shows that identical operational costs can be accompanied by substantially different local verification hardness indices. With infinite data, mean performance parity, high quantile performance parity, and upper tail performance parity with the same fairness tolerance are interchangeable from the perspective of the price of fairness in this environment, because they select the same best fair policy and impose the same performance loss. From a finite data perspective, however, they are far from equivalent. Each fairness specification estimates a different feature of the outcome distribution, and those features differ dramatically in estimation variance. A manager who compares fairness rules only by their price of fairness would entirely miss this gap.

To quantify this gap, we next compare the relative local verification hardness of mean performance parity versus high quantile performance parity.

\begin{corollary}
\label{cor:mean-quantile}
Under the conditions of Theorem \ref{thm:same-frontier}, let \(f_k\) denote the
probability density of the system noise \(Z_k\). For a target quantile
\(\beta\), the relative local verification hardness index of high quantile performance parity compared to
mean performance parity is given by
\begin{equation}
\label{eq:ratio-mq}
\frac{
H_{k,gg'}^{Q_\beta}(\kappa;\bm{\alpha})
}{
H_{k,gg'}^{\text{mean}}(\kappa;\bm{\alpha})
}
=
\frac{
\beta(1-\beta)
}{
\text{Var}(Z_k)\left[f_k(q_{k,\beta})\right]^2
},
\end{equation}
where \(q_{k,\beta}\) is the \(\beta\)-quantile of \(Z_k\).
\end{corollary}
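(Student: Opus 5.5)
The plan is to invoke the ratio formula \eqref{eq:ratio-hardness} of Theorem~\ref{thm:same-frontier}. That formula reduces the comparison to computing the influence-function variances $\tau^2_{Q_\beta,k}$ and $\tau^2_{\mathrm{mean},k}$ under the location-shift model \eqref{eq:location-family}. Theorem~\ref{thm:same-frontier} already guarantees two things: the slack $\Delta^\psi_{k,gg'}(\kappa)$ is the same for both metrics, since the gap is $\theta_{kg}-\theta_{kg'}$, and the influence-function variance is common across groups, $\sigma^2_{\psi,kg}=\tau^2_{\psi,k}$. Hence the allocation weights $1/\alpha_{kg}+1/\alpha_{kg'}$ cancel in the ratio of hardness indices. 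What remains is to compute the two group-invariant variances explicitly and take their ratio.

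\textbf{Step 1 (mean).} For $\psi(P)=\mathbb E[X]$, the definition \eqref{eq:if_def} gives $\IF(x;P_{kg})=x-\mathbb E[X_{kg}]$. Under \eqref{eq:location-family}, $X_{kg}-\mathbb E X_{kg}=Z_k-\mathbb E Z_k$, so $\tau^2_{\mathrm{mean},k}=\Var(Z_k)$. This is finite and positive whenever $Z_k$ is nondegenerate with a finite second moment, which is part of the regularity class.

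\textbf{Step 2 (quantile).} Assume $P_{kg}$ has a density that is positive and continuous at its $\beta$-quantile, as in the quantile regularity class. The standard Bahadur-type influence function is
\[
\IF_{Q_\beta}(x;P_{kg})=\frac{\beta-\mathbf 1\{x\le Q_\beta(P_{kg})\}}{p_{kg}(Q_\beta(P_{kg}))},
\]
which follows by differentiating $F_\varepsilon(Q_\beta(P_\varepsilon))=\beta$ along the contamination path. Its numerator is a centered Bernoulli variable, so the variance is $\beta(1-\beta)/p_{kg}(Q_\beta(P_{kg}))^2$. Under the location shift, $p_{kg}(x)=f_k(x-\theta_{kg})$ and $Q_\beta(P_{kg})=\theta_{kg}+q_{k,\beta}$, so $p_{kg}(Q_\beta(P_{kg}))=f_k(q_{k,\beta})$. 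This yields $\tau^2_{Q_\beta,k}=\beta(1-\beta)/f_k(q_{k,\beta})^2$, independent of $g$.

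\textbf{Step 3.} Substituting Steps 1 and 2 into \eqref{eq:ratio-hardness} gives \eqref{eq:ratio-mq}.

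The only step that needs care is Step 2. There I would check that the quantile influence function is the one delivered by the LAL expansion in Assumption~\ref{ass:lal}, meaning the Bahadur representation verified in the appendix. I would also check that $f_k(q_{k,\beta})>0$, so that the variance is finite and nonzero. Everything else is direct substitution.
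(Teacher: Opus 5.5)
Your proposal is correct and matches the paper's proof: apply the ratio formula \eqref{eq:ratio-hardness} from Theorem~\ref{thm:same-frontier}, compute $\tau^2_{\mathrm{mean},k}=\Var(Z_k)$ and $\tau^2_{Q_\beta,k}=\beta(1-\beta)/f_k(q_{k,\beta})^2$, and substitute. The paper additionally checks that the mean and quantile functionals are translation-equivariant, so that Theorem~\ref{thm:same-frontier} applies, and it evaluates the influence functions directly at $P_{Z_k}$ via the shift-invariance identity rather than at $P_{kg}$ followed by translation; both of these are minor differences in presentation.
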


Corollary \ref{cor:mean-quantile} identifies the source of relative local verification hardness index for high quantile performance parity compared to mean performance parity under given conditions.
High quantile parity is difficult to verify when the density around the target quantile is extremely low. In
service settings, this means that fairness specifications based on extreme waits, unusually
fast service, or other high quantile experiences may require much larger sampling budgets
than fairness specifications based on averages, even if they produce the same operational
costs.

\subsection{Under-Service Risk Metric and Service Thresholds}
\label{subsec:threshold-placement}

We next turn to under-service risk metrics. They are different from the translation-equivariant
metrics considered above. A shift in service performance does not simply
translate the under-service probability by the same amount. Instead, the
measured disparity depends on where the service threshold \(s\) is placed
relative to the outcome distribution.

We continue to work with the location-shift model
\eqref{eq:location-family}. Let \(F_k\) and \(f_k\) denote the CDF and
density of the noise \(Z_k\). For a minimum service threshold \(s\),
the under-service risk metric is
\[
\psi_s(P_{kg})=\mathbb{P}(X_{kg}\le s).
\]
For a fixed policy \(k\) and group pair \((g,g')\), define $d_{k,gg'}=\theta_{kg}-\theta_{kg'}$ and $\bar\theta_{k,gg'}=\frac{\theta_{kg}+\theta_{kg'}}{2}$. When group differences are small, the under-service risk disparity is
locally governed by the density around \(s-\bar\theta_{k,gg'}\). To
compare thresholds on the same scale, we calibrate the threshold-specific tolerance by
\begin{equation}
\label{eq:kappa-calib}
\kappa(s)
=
f_k(s-\bar\theta_{k,gg'})\,\bar\kappa,
\end{equation}
where \(\bar\kappa>0\) is a fixed allowable location disparity.
This calibration makes the under-service risk constraint locally
equivalent to the same bound \(\bar\kappa\) on the location
gap \(d_{k,gg'}\) for the fixed policy and group pair. Denote $p_k(s)=F_k(s-\bar\theta_{k,gg'})$.

\begin{theorem}
\label{thm:service-threshold}
Suppose the location-shift model \eqref{eq:location-family} holds and
\(f_k\) is differentiable near \(s-\bar\theta_{k,gg'}\). Then, as
\(d_{k,gg'}\to0\),
\begin{equation}
\label{eq:service-first-order}
|\psi_s(P_{kg})-\psi_s(P_{kg'})|
=
f_k(s-\bar\theta_{k,gg'})\,|d_{k,gg'}|
+
o(|d_{k,gg'}|).
\end{equation}
Under Assumption \ref{ass:lal} and the calibration
\eqref{eq:kappa-calib}, the local hardness of under-service risk
verification satisfies
\begin{equation}
\label{eq:service-hardness}
H_{k,gg'}^{\SL(s)}(\kappa(s);\bm{\alpha})
=
\frac{
p_k(s)(1-p_k(s))
}{
f_k(s-\bar\theta_{k,gg'})^2\bar\kappa^2
}
\left(
\frac{1}{\alpha_{kg}}
+
\frac{1}{\alpha_{kg'}}
\right)
+
o(1),
\end{equation}
whenever \(\alpha_{kg}>0\) and \(\alpha_{kg'}>0\). Consequently, for two
thresholds \(s_1\) and \(s_2\) calibrated to the same allowable location disparity
\(\bar\kappa\),
\begin{equation}
\label{eq:service-ratio}
\frac{
H_{k,gg'}^{\SL(s_1)}(\kappa(s_1);\bm{\alpha})
}{
H_{k,gg'}^{\SL(s_2)}(\kappa(s_2);\bm{\alpha})
}
=
\frac{
p_k(s_1)(1-p_k(s_1))/f_k(s_1-\bar\theta_{k,gg'})^2
}{
p_k(s_2)(1-p_k(s_2))/f_k(s_2-\bar\theta_{k,gg'})^2
}
+
o(1).
\end{equation}
\end{theorem}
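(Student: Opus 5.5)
The plan is to treat the three displays in turn: first a Taylor expansion of the population disparity, then the influence function variance of the under-service risk estimator, and finally a ratio of the resulting hardness expressions.

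\textbf{Step 1: first-order disparity.} Under \eqref{eq:location-family},
\[
\psi_s(P_{kg})=\mathbb P(\theta_{kg}+Z_k\le s)=F_k(s-\theta_{kg}).
\]
Write \(u=s-\bar\theta_{k,gg'}\). Then \(s-\theta_{kg}=u-d_{k,gg'}/2\) and \(s-\theta_{kg'}=u+d_{k,gg'}/2\). The disparity is therefore \(F_k(u-d/2)-F_k(u+d/2)\). Since \(f_k\) is differentiable near \(u\), \(F_k\) is twice differentiable there. A symmetric Taylor expansion around \(u\) cancels the even-order terms and gives
\[
F_k(u-d/2)-F_k(u+d/2)=-f_k(u)\,d+O(d^3).
\]
Here \(d=d_{k,gg'}\), and differentiability of \(f_k\) is enough to make the remainder \(o(|d|)\). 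Taking absolute values yields \eqref{eq:service-first-order}.

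\textbf{Step 2: influence function variance.} For \(\psi_s(P)=P(X\le s)\), the definition \eqref{eq:if_def} gives \(\IF(x;P)=\mathbf 1(x\le s)-P(X\le s)\). Its variance is therefore \(\sigma^2_{\psi_s,kg}=p_{kg}(1-p_{kg})\), where \(p_{kg}=F_k(s-\theta_{kg})\). By continuity of \(F_k\) at \(u\), both \(p_{kg}\) and \(p_{kg'}\) equal \(p_k(s)+O(|d|)\). Hence
\[
V^{\psi_s}_{k,gg'}(\bm\alpha)=p_k(s)(1-p_k(s))\left(\frac{1}{\alpha_{kg}}+\frac{1}{\alpha_{kg'}}\right)+O(|d|)
\]
for fixed positive \(\alpha\).

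\textbf{Step 3: slack and hardness.} By Step 1 and the calibration \eqref{eq:kappa-calib}, the slack is
\[
\Delta=\kappa(s)-|d^{\psi_s}|=f_k(u)\bigl(\bar\kappa-|d|\bigr)+o(|d|).
\]
As \(d\to0\), \(\Delta^2\to f_k(u)^2\bar\kappa^2\). This limit is positive provided \(f_k(u)>0\), which I would state as implicit in the calibration. Dividing the variance from Step 2 by \(\Delta^2\), as in \eqref{eq:hardness}, gives \eqref{eq:service-hardness} with an \(o(1)\) error as \(d\to0\). The ratio \eqref{eq:service-ratio} follows by dividing the two expansions. The common factor \(\bar\kappa^{-2}(1/\alpha_{kg}+1/\alpha_{kg'})\) cancels, and the denominator limit is bounded away from zero, so the \(o(1)\) terms remain \(o(1)\) in the quotient.

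\textbf{Main obstacle.} The delicate point is interpreting the \(o(1)\) correctly. The hardness limit is taken as \(d\to0\) with \(\bar\kappa\) fixed, so the slack limit \(f_k(u)\bar\kappa\) is nondegenerate. I would make this explicit, along with the uniformity of the error in \(\bm\alpha\) over the set where \(\alpha\) is bounded below. I also need to confirm that Assumption \ref{ass:lal} applies to the indicator functional. That is immediate here, because the plug-in estimator is an exact sample mean of indicators, so the expansion \eqref{eq:lal} holds with zero remainder.
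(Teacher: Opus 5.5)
Your proposal is correct and follows essentially the same route as the paper's proof: write $\psi_s(P_{kg})=F_k(s-\theta_{kg})$, Taylor-expand around $s-\bar\theta_{k,gg'}$, use the indicator influence-function variance $p(1-p)\to p_k(s)(1-p_k(s))$, divide by the calibrated slack $f_k(s-\bar\theta_{k,gg'})^2\bar\kappa^2+o(1)$, and take the ratio. One small correction: when $f_k$ is only differentiable, the symmetric-difference remainder is $o(d^2)$, not $O(d^3)$ (which would need $f_k$ twice differentiable), but as you note, the $o(|d|)$ remainder is all the argument needs.
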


Theorem \ref{thm:service-threshold} shows that, after an appropriate calibration of the tolerance, under-service risk parities can have the same fair policy set across different service thresholds, so they have the same price of fairness. In particular, it is locally equivalent to limiting the same underlying location gap \(d_{k,gg'}\). However, the statistical difficulty of verifying the fairness specification still depends on where the threshold is placed. Thresholds in low-density regions may lead to large local verification hardness, which depends on the associated event probability and, in turn, can result in high implementation costs. This again shows that fairness specifications with similar operational costs can differ substantially in implementation
difficulty with finite data.

The theorem suggests that sampling budget requirements should also be considered when choosing a fairness specification. When several specifications are substantively appropriate and operationally similar, these requirements provide an additional criterion for service organizations before committing to one.

\section{Implementation Cost Divergence Near Critical Fairness Tolerances}
\label{sec:wedge}

This section provides another instance that operational cost equivalence does not imply implementation cost equivalence. Unlike Section~\ref{sec:equiv}, which compares only local verification hardness indices, this section directly compares implementation costs for fairness specifications that have the same operational cost. We also show that local fairness verification hardness creates an implementation gap between the critical fairness tolerance boundary and the tightest fairness tolerance that can be credibly supported with finite data. A finite sampling budget can thus prevent a service organization from implementing fairness specifications that would be attainable under a larger sampling budget.

\subsection{Budget Requirements Near Critical Fairness Tolerances}

We now characterize the implementation gap more precisely by considering fairness tolerances near a critical boundary. At such a boundary, the identity of the best fair policy may change. Reliable fair policy selection then requires not only accurate policy ranking, but also verification of fairness comparisons with slack close to zero.

A fairness tolerance level \(\kappa_0\) is called \emph{critical fairness tolerance} if
the identity of the best fair policy can change at \(\kappa_0\). This can occur when the current best fair policy becomes infeasible as the fairness tolerance is tightened, or when a higher-performing unfair policy becomes feasible as the fairness tolerance is relaxed. Thus, the fair policy selection problem can behave differently on the two sides of \(\kappa_0\).

A fairness comparison, denoted by \((k,g,g')\), is active if its verification error decay rate governs the overall decay rate of \(\PFS^{\pi}_{\mathfrak f}(T)\), where the verification error refers to both the case where the corresponding absolute fairness gap \(|d_{k,gg'}^\psi|\) is estimated to be below \(\kappa\) while the true gap exceeds \(\kappa\), and the case where \(|d_{k,gg'}^\psi|\) is estimated to be above \(\kappa\) while the true gap falls below \(\kappa\).

For a fairness specification \(\mathfrak f\),
let \(\Phi^\star_{\mathfrak f}\) denote the optimal local rate. This quantity summarizes, under an efficient allocation of the sampling budget, the best achievable exponential decay rate of \(\PFS^{\pi}_{\mathfrak f}(T)\) given that \(\kappa\) is sufficiently close to \(\kappa_0\). Near a critical fairness tolerance, the difficulty of fair policy selection is governed by the fairness comparison whose corresponding fairness slack is close to zero, making this fairness comparison active. This result is a straightforward consequence of the results in Section~\ref{sec:pro}. In fact, as will be shown in Section \ref{proof:wedge}, the absolute fairness gap of the active fairness comparison equals the critical fairness tolerance. Such comparisons have optimal error decay rates that depend on the squared distance between \(\kappa\) and \(\kappa_0\), a property specified in the approximations presented below. We write the local expansion of \(\Phi^\star_{\mathfrak f}\) as
\[
\Phi^\star_{\mathfrak f}
=
C_{(\psi,\kappa_0)}(\kappa-\kappa_0)^2
+
o\!\left((\kappa-\kappa_0)^2\right),
\]
where \(C_{(\psi,\kappa_0)}\) captures the local statistical complexity of
the active fairness comparison. This constant depends on the fairness metric, the service performance distributions of the relevant policy-group pairs, and the allocation proportions of samples across these pairs.

For a \(\PCS_{\mathfrak f}(T)\) target \(1-\delta\), recall that the implementation cost is
\[
B_{\mathfrak f}(\delta)
=
\inf
\left\{
T\in\mathbb R_+\mid
\PCS_{\mathfrak f}(T)\ge 1-\delta
\right\}.
\]
Given a fixed budget \(T\), define the tightest statistically implementable tolerance above the critical fairness tolerance as
\[
\kappa_{\mathrm{impl},\mathfrak f}(T,\delta)
=
\inf
\left\{
\kappa>\kappa_0\mid
B_{\mathfrak f}(\delta)\le T
\right\}.
\]
The corresponding lower-side definition is analogous.

The following result translates the optimal local rate into the budget needed to implement a fairness tolerance with a specified PCS level, in the regime where the fairness tolerance is near the critical fairness tolerance.

\begin{theorem}
\label{thm:wedge}
Consider a fairness metric \(\psi\) and a tolerance \(\kappa\) near a critical fairness
tolerance \(\kappa_0\),
\[
\Phi^\star_{\mathfrak f}
=
C_{(\psi,\kappa_0)}(\kappa-\kappa_0)^2
+
o\!\left((\kappa-\kappa_0)^2\right),
\qquad C_{(\psi,\kappa_0)}>0,
\]
and \(1-\PCS_{\mathfrak f}(T)\) attains this optimal decay rate $\Phi^\star_{\mathfrak f}$.
\[
1-\PCS_{\mathfrak f}(T)
=
\exp\{-T\Phi^\star_{\mathfrak f}+o(T)\}.
\]
Then, for \(\kappa\) sufficiently close to \(\kappa_0\) on the relevant side,
\[
B_{\mathfrak f}(\delta)
=
\frac{
\log(1/\delta)
}{
C_{(\psi,\kappa_0)}(\kappa-\kappa_0)^2
}
\left(1+o(1)\right).
\]
Consequently, if \(\log(1/\delta)/T\) is sufficiently close to zero, then
\[
\kappa_{\mathrm{impl},\mathfrak f}(T,\delta)-\kappa_0
=
\sqrt{
\frac{
\log(1/\delta)
}{
C_{(\psi,\kappa_0)}T
}
}
\left(1+o(1)\right).
\]
\end{theorem}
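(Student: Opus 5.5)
The plan is to invert the exponential decay relation for the best attainable probability of false selection. Fix \(\kappa\) on the relevant side of \(\kappa_0\) and write \(\epsilon=\kappa-\kappa_0\). The hypothesis \(1-\PCS_{\mathfrak f}(T)=\exp\{-T\Phi^\star_{\mathfrak f}+o(T)\}\) says there is a function \(r_\kappa(T)\) with \(r_\kappa(T)/T\to0\) such that \(\log(1/\PFS_{\mathfrak f}(T))=T\Phi^\star_{\mathfrak f}+r_\kappa(T)\). The condition \(\PCS_{\mathfrak f}(T)\ge1-\delta\) is therefore equivalent to
\[
T\Phi^\star_{\mathfrak f}+r_\kappa(T)\ge\log(1/\delta).
\]

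For small \(\delta\) the crossing time \(B_{\mathfrak f}(\delta)\) is large, so \(r_\kappa(T)=o(T)\) at the crossing. Sandwiching the crossing time gives
\[
B_{\mathfrak f}(\delta)\,\Phi^\star_{\mathfrak f}=\log(1/\delta)\bigl(1+o(1)\bigr).
\]
For the lower bound, any \(T\) below \((1-\eta)\log(1/\delta)/\Phi^\star_{\mathfrak f}\) fails once \(|r_\kappa|\le\eta' T\). For the upper bound, any \(T\) above \((1+\eta)\log(1/\delta)/\Phi^\star_{\mathfrak f}\) succeeds. I will also assume, or argue from optimality of the supremum over \(\Pi(T)\), that \(\PCS_{\mathfrak f}(T)\) is eventually monotone in \(T\), or at least that the infimum in \eqref{eq:burden} is not attained at a spuriously small \(T\). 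This ensures that exceeding the threshold once means the infimum sits near the threshold.

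Next I substitute the local expansion \(\Phi^\star_{\mathfrak f}=C_{(\psi,\kappa_0)}\epsilon^2(1+o(1))\), valid as \(\epsilon\to0\) because \(C_{(\psi,\kappa_0)}>0\). This gives \(B_{\mathfrak f}(\delta)=\log(1/\delta)/(C_{(\psi,\kappa_0)}\epsilon^2)\,(1+o(1))\).

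For the gap statement, I invert the map \(\kappa\mapsto B_{\mathfrak f}(\delta)\) on the side \(\kappa>\kappa_0\). The set \(\{\kappa>\kappa_0: B_{\mathfrak f}(\delta)\le T\}\) is, up to \(1+o(1)\) factors, the set of \(\kappa\) with
\[
C_{(\psi,\kappa_0)}(\kappa-\kappa_0)^2\ge\frac{\log(1/\delta)}{T}\bigl(1+o(1)\bigr).
\]
Its infimum is \(\kappa_0+\sqrt{\log(1/\delta)/(C_{(\psi,\kappa_0)}T)}\,(1+o(1))\). Here \(\log(1/\delta)/T\to0\) forces \(\epsilon\to0\), which is exactly the regime where the expansion is valid. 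To justify taking the infimum I use a two-sided bracket. For \(\kappa-\kappa_0\ge(1+\eta)\sqrt{\cdot}\) the budget bound gives \(B_{\mathfrak f}(\delta)\le T\). For \(\kappa-\kappa_0\le(1-\eta)\sqrt{\cdot}\) it gives \(B_{\mathfrak f}(\delta)>T\). Letting \(\eta\downarrow0\) completes the argument.

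The main obstacle is the uniformity of the two limits. The \(o(T)\) term is a limit in \(T\) at fixed \(\kappa\), while the expansion of \(\Phi^\star_{\mathfrak f}\) is a limit in \(\kappa\to\kappa_0\), and the statement needs them jointly along \(T\Phi^\star_{\mathfrak f}\approx\log(1/\delta)\). I would handle this by interpreting the hypothesis as uniform for \(\kappa\) in a one-sided neighborhood of \(\kappa_0\). Concretely, I would require \(r_\kappa(T)=o(T\Phi^\star_{\mathfrak f})\) uniformly, so that the relative error is \(o(1)\) in the joint regime. I would state this explicitly as the reading of ``sufficiently close'' and ``sufficiently close to zero''. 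Beyond that, the argument is elementary sandwiching.
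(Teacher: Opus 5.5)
Your proposal is correct and follows essentially the same route as the paper, which also inverts $-\log\PFS_{\mathfrak f}(T)=T\Phi^\star_{\mathfrak f}(1+o(1))$ into $B_{\mathfrak f}(\delta)=\log(1/\delta)/\Phi^\star_{\mathfrak f}\,(1+o(1))$, substitutes the quadratic expansion, and solves $C_{(\psi,\kappa_0)}(\kappa-\kappa_0)^2\ge\log(1/\delta)/T\,(1+o(1))$ for the smallest $\kappa>\kappa_0$; its preliminary two-case derivation of the quadratic form of $\Phi^\star_{\mathfrak f}$ only motivates what the statement already assumes. The paper silently absorbs the monotonicity and joint-uniformity issues you flag into its $(1+o(1))$ factors, so your treatment is more careful, but note that a uniform bound $r_\kappa(T)=o(T\Phi^\star_{\mathfrak f})$ can only be expected as $T\Phi^\star_{\mathfrak f}\to\infty$, while at the crossing $T\Phi^\star_{\mathfrak f}\approx\log(1/\delta)$, so the gap statement also needs $\log(1/\delta)\to\infty$ (for fixed $\delta$, a Gaussian active comparison replaces $\log(1/\delta)$ by $z_\delta^2/2$, where $z_\delta$ is the standard normal upper $\delta$-quantile).
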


Theorem~\ref{thm:wedge} shows that implementing a fairness tolerance close to a critical fairness tolerance boundary is statistically costly. The budget required to implement a tolerance \(\kappa\) with confidence scales with the inverse-square rate
\((\kappa-\kappa_0)^{-2}\). 
Equivalently, under a fixed budget \(T\), the implementable tolerance must remain separated from the critical tolerance by an implementation gap of order \(\sqrt{\log(1/\delta)/T}\). 
This implementation gap reflects the difficulty of verifying whether a policy corresponding to the active fairness comparison satisfies the fairness constraint under a finite budget. The constant \(C_{(\psi,\kappa_0)}\) also controls the size of this gap. A larger \(C_{(\psi,\kappa_0)}\) means that the active fairness comparison
provides a stronger statistical signal as \(\kappa\) moves away from
\(\kappa_0\), so fewer samples are needed to verify feasibility with the
same PCS level. A smaller \(C_{(\psi,\kappa_0)}\) means that the
active fairness comparison is harder to verify. In that case, one must
either increase the sampling budget or use a more conservative tolerance.

For a given fairness metric, consider two tolerances \(\kappa_1\) and \(\kappa_2\) 
that are both greater than a critical tolerance \(\kappa_0\) or both less than 
\(\kappa_0\), and sufficiently close to \(\kappa_0\). They define two fairness specifications 
\(\mathfrak{f}_1\) and \(\mathfrak{f}_2\) that have the same operational cost. 
Theorem~\ref{thm:wedge} shows that their implementation costs 
\(B_{\mathfrak{f}_1}(\delta)\) and \(B_{\mathfrak{f}_2}(\delta)\) differ, 
illustrating an implementation cost divergence, which demonstrates that operational equivalence is not equivalent to implementation equivalence.

\subsection{Comparing Budget Requirements Across Fairness Specifications}

The last subsection gives a local implementation gap result for a
single fairness specification \(\mathfrak f\). We now specialize this perspective to compare the fairness specifications considered in this research. The purpose
is to identify how the
constant \(C_{(\psi,\kappa_0)}\) changes across metrics and, therefore, how
different fairness specifications translate into different finite-budget requirements.

For a fairness metric $\psi$, the associated constant $C_{(\psi,\kappa_0)}$ is determined by policy-group-level structural factors. A low-variance influence function for the metric \(\psi\) at the policy-group distributions in the active comparison tends to give a larger $C_{(\psi,\kappa_0)}$. By contrast, a high-variance influence function tends to give smaller $C_{(\psi,\kappa_0)}$ values. This feature can be directly observed from the closed-form expression of $C_{(\psi,\kappa_0)}$ derived in Section \ref{proof:wedge}.

The following corollary summarizes the resulting finite-budget implication.

\begin{corollary}
\label{cor:separation}
Suppose two fairness metrics \(\psi_1\) and
\(\psi_2\) have the same fair policy set near critical tolerance \(\kappa_0\). According to our definition of critical tolerance, for two fairness tolerances \(\kappa_1\) and \(\kappa_2\) that are both greater than \(\kappa_0\) or both less than \(\kappa_0\), and sufficiently close to \(\kappa_0\), we can conclude that the two fairness specifications \((\psi_1, \kappa_1)\) and \((\psi_2, \kappa_2)\) also have the same fair policy set. Suppose the local statistical complexity constants satisfy $C_{(\psi_1, \kappa_0)}>C_{(\psi_2, \kappa_0)}>0$. Then, if \(\log(1/\delta)/T\) is sufficiently close to zero,
there exists a nonempty interval of tolerances
\[
I(T,\delta)
=
\left[
\kappa_0+
\sqrt{
\frac{\log(1/\delta)}{C_{(\psi_1, \kappa_0)}T}
},
\,
\kappa_0+
\sqrt{
\frac{\log(1/\delta)}{C_{(\psi_2, \kappa_0)}T}
}
\right)
\]
such that, for any \(\kappa_1,\kappa _2\in I(T,\delta)\),
\[
B_{(\psi_1, \kappa_1)}(\delta)\le T(1+o(1))
<
B_{(\psi_2, \kappa_2)}(\delta).
\]
The result also holds when \(\kappa_1 = \kappa_2\).
\end{corollary}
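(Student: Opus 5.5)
The proof applies Theorem~\ref{thm:wedge} to each specification separately and reads off the inequalities from the two implementation-gap radii.

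\textbf{Step 1 (same fair set).} By definition of a critical tolerance, the identity of the best fair policy can change only at \(\kappa_0\). The nonboundary assumption says no other pairwise disparity equals any tolerance in a small one-sided neighborhood of \(\kappa_0\). Hence \(\mathcal F_{(\psi_j,\kappa)}\) is constant for \(\kappa\) in \((\kappa_0,\kappa_0+\eta)\), for some \(\eta>0\). Since \(\mathcal F_{(\psi_1,\cdot)}\) and \(\mathcal F_{(\psi_2,\cdot)}\) agree near \(\kappa_0\) by hypothesis, \((\psi_1,\kappa_1)\) and \((\psi_2,\kappa_2)\) share the same fair set, best fair policy and PoF. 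This holds whenever both tolerances lie in that neighborhood, including \(\kappa_1=\kappa_2\).

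\textbf{Step 2 (nonempty interval inside the neighborhood).} Write \(r_j(T)=\sqrt{\log(1/\delta)/(C_{(\psi_j,\kappa_0)}T)}\). Because \(C_{(\psi_1,\kappa_0)}>C_{(\psi_2,\kappa_0)}>0\), we have \(r_1<r_2\), so \(I(T,\delta)=[\kappa_0+r_1,\kappa_0+r_2)\) is nonempty. When \(\log(1/\delta)/T\to0\), both radii shrink to zero. So for \(\log(1/\delta)/T\) small enough, \(I(T,\delta)\) lies inside the neighborhood from Step~1 and inside the region where the expansion of Theorem~\ref{thm:wedge} is valid.

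\textbf{Step 3 (the two inequalities).} For \(\kappa_1\in I\), Theorem~\ref{thm:wedge} gives \(B_{(\psi_1,\kappa_1)}(\delta)=\frac{\log(1/\delta)}{C_{(\psi_1,\kappa_0)}(\kappa_1-\kappa_0)^2}(1+o(1))\). Since \(\kappa_1-\kappa_0\ge r_1\), this is at most \(T(1+o(1))\). For \(\kappa_2\in I\), we have \(\kappa_2-\kappa_0<r_2\), so the leading term \(\log(1/\delta)/(C_{(\psi_2,\kappa_0)}(\kappa_2-\kappa_0)^2)\) strictly exceeds \(T\). Setting \(\kappa_1=\kappa_2\) requires nothing extra, since both bounds hold pointwise on \(I\).

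\textbf{Main obstacle.} The strict inequality \(B_{(\psi_2,\kappa_2)}(\delta)>T\) must survive the \((1+o(1))\) factor. Near the right endpoint \(\kappa_0+r_2\), the leading term exceeds \(T\) only by a vanishing margin, which the \(o(1)\) error could swamp. I would handle this with the monotonicity route. For fixed \(T\), \(\kappa_{\mathrm{impl},(\psi_2,\cdot)}(T,\delta)\) is the infimum of implementable tolerances on that side. By Theorem~\ref{thm:wedge} it equals \(\kappa_0+r_2(1+o(1))\), and every \(\kappa_2\) below it has \(B>T\) by definition of the infimum. So the conclusion holds on \([\kappa_0+r_1,\kappa_0+r_2(1+o(1)))\), the stated interval up to an endpoint correction of relative order \(o(1)\). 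This is consistent with the \((1+o(1))\) qualifier already attached to the first inequality. If needed, I would phrase the result this way, or shrink the right endpoint by a factor \((1-\epsilon)\) and then let \(\epsilon\to0\).
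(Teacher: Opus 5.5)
Your proposal is correct and follows the paper's own argument: you apply Theorem~\ref{thm:wedge} to each specification and compare $\kappa_j-\kappa_0$ with the radii $\sqrt{\log(1/\delta)/(C_{(\psi_j,\kappa_0)}T)}$, while the paper simply takes the shared fair set as given (your Step~1 is fine, though local constancy of the fair set near $\kappa_0$ really follows from there being finitely many fairness scores, not from the nonboundary assumption). The paper passes directly from the leading term exceeding $T$ to $B_{(\psi_2,\kappa_2)}(\delta)>T(1+o(1))$ without addressing the right endpoint, so your infimum argument through $\kappa_{\mathrm{impl},\mathfrak f}(T,\delta)$, which gives the conclusion on $[\kappa_0+r_1,\kappa_0+r_2(1+o(1)))$, is a more careful treatment of that same step rather than a different route.
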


Corollary \ref{cor:separation} shows that finite budget can separate
fairness specifications that are indistinguishable at operational costs. Both
rules may recommend the same fair policy set and the same
price of fairness, but only the rule with the larger complexity
constant $C_{(\psi, \kappa_0)}$ can be verified at the desired PCS within the given
budget. This also demonstrates that operational equivalence is not equivalent to implementation equivalence. In this sense, the sampling budget can change the practical
choice of fairness metrics. A parity that is theoretically attractive
but statistically fragile may be infeasible to implement, while another
parity with the same operational
costs may be usable because its variance of the active fairness comparison can be reduced more easily.

For example, a hospital may find that mean waiting time parity and high quantile waiting time parity recommend nearly the same triage policy in a calibrated simulation model. If the fairness specification based on the high quantile waiting time metric has a smaller local statistical complexity constant $C_{(\psi, \kappa_0)}$, however, the same sampling budget may verify the mean waiting time parity rule but not the high quantile one. The two parities are operationally similar, but only one can be implemented while maintaining the desired PCS. Fairness specifications should therefore not be compared only by their
operational meaning or by the price of fairness they induce. They should also be compared in terms of implementation under a realistic sampling budget.

\section{Fairness-Guided Budget Allocation}
\label{sec:pro}
This section develops the fairness-guided budget allocation rule. The fairness-guided allocation rule developed in this section serves as an operational mechanism within the fairness framework. Fair policy selection contains objective ranking comparisons and cross-group feasibility comparisons, so the useful value of an additional sample depends on both. We first decompose false selection into ranking and verification events. We then construct
tractable lower bounds on the corresponding error exponents and use the
minimum of these bounds as an allocation criterion. The resulting criterion supplies the target proportions used by the adaptive algorithm.

Since \(\PFS^{\pi}_{\mathfrak f}(T)=1-\PCS^{\pi}_{\mathfrak f}(T)\), under a fixed allocation, the goal is to make \(\PFS^{\pi}_{\mathfrak f}(T)\) decay as quickly as possible as the total budget \(T\) increases. 
We therefore use the exponential decay rate of \(\PFS^{\pi}_{\mathfrak f}(T)\) as the optimization objective.

Let \(k^\star=k^\star_{\mathfrak f}\). The policy selection can fail in two basic ways. First, the true best fair policy may be incorrectly rejected as unfair (fairness verification error event).
Second, some competing policy may be empirically classified as fair and appear to perform as well as or better than \(k^\star\) (objective ranking
error event). Define
\[
E_0
=
\left\{
k^\star \notin \widehat{\mathcal F}_{\mathfrak f}
\right\}
\]
to denote the fairness verification error event. For each \(k\neq k^\star\), define
\[
E_k
=
\left\{
k^\star \in \widehat{\mathcal F}_{\mathfrak f},\;
k \in \widehat{\mathcal F}_{\mathfrak f},\;
\widehat \mu_k \ge \widehat \mu_{k^\star}
\right\}
\]
to denote the objective ranking error event, which is formed by the intersection of three conditions.
Then
\[
\PFS^{\pi}_{\mathfrak f}(T)
=
\mathbb P
\left(
E_0
\cup
\bigcup_{k\neq k^\star} E_k
\right).
\]

\begin{lemma}
\label{lemma2.2}
The following rate result holds.
\begin{equation}
\label{eq:decay}
\lim_{T\to\infty}
-\frac{1}{T}\log \PFS^{\pi}_{\mathfrak f}(T)
=
\min
\left\{
-\lim_{T\to\infty}\frac{1}{T}\log \mathbb P(E_0),
\;
\min_{k\neq k^\star}
\left(
-\lim_{T\to\infty}\frac{1}{T}\log \mathbb P(E_k)
\right)
\right\},
\end{equation}
when the component exponential decay rates $-\lim_{T\to\infty}\frac{1}{T}\log \mathbb P(E_k)$ exist.
\end{lemma}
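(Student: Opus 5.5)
The proof is the standard ``principle of the largest term'' for a finite union. The union has $K$ events: $E_0$ and the $K-1$ events $E_k$, $k\neq k^\star$. We sandwich $\PFS^{\pi}_{\mathfrak f}(T)$ between the largest of these probabilities and $K$ times the largest. After taking logarithms and dividing by $T$, the factor $K$ disappears in the limit.

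Write $p_0(T)=\mathbb P(E_0)$ and $p_k(T)=\mathbb P(E_k)$. Write $r_0=-\lim_{T}\frac1T\log p_0(T)$ and $r_k=-\lim_T\frac1T\log p_k(T)$, which exist by hypothesis and may equal $+\infty$. Let $r^\star=\min\{r_0,\min_{k\neq k^\star}r_k\}$.

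\emph{Lower bound on the probability.} Monotonicity gives $\PFS^{\pi}_{\mathfrak f}(T)\ge\max\{p_0(T),\max_{k\neq k^\star}p_k(T)\}$. This is at least $p_j(T)$ for whichever index $j$ attains $r^\star$. Hence $\limsup_T -\frac1T\log\PFS^{\pi}_{\mathfrak f}(T)\le r_j=r^\star$.

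\emph{Upper bound on the probability.} The union bound gives
\[
\PFS^{\pi}_{\mathfrak f}(T)\le p_0(T)+\sum_{k\neq k^\star}p_k(T)\le K\max\Bigl\{p_0(T),\max_{k\neq k^\star}p_k(T)\Bigr\}.
\]
Therefore
\[
-\frac1T\log\PFS^{\pi}_{\mathfrak f}(T)\ge-\frac{\log K}{T}+\min\Bigl\{-\tfrac1T\log p_0(T),\min_{k\neq k^\star}\bigl(-\tfrac1T\log p_k(T)\bigr)\Bigr\}.
\]
The minimum of finitely many sequences is a continuous function of them, including when some limits are $+\infty$ in the extended sense. So the right-hand side converges to $r^\star$, and $\liminf_T -\frac1T\log\PFS^{\pi}_{\mathfrak f}(T)\ge r^\star$.

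Combining the two bounds shows the limit exists and equals $r^\star$, which is \eqref{eq:decay}.

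The only points needing care are edge cases, and I do not expect a real obstacle. One is that some $p_k(T)$ may be $0$ for all large $T$; interpret this as $r_k=+\infty$, which does not affect the minimum. The other is that all rates may be infinite, in which case both sides equal $+\infty$. The fixed-$\pi$ structure (allocation proportions $\bm\alpha$) plays no role beyond ensuring the component limits exist, which is assumed. The number of events $K$ does not grow with $T$, which is exactly what makes the $\log K/T$ term vanish.
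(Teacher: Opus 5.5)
Your proof is correct and follows the paper's argument: both bound $\PFS^{\pi}_{\mathfrak f}(T)$ below by the largest of the $K$ event probabilities and above by $K$ times that maximum, then take $-\frac{1}{T}\log$ so that the $\log K/T$ term vanishes. Your explicit $\liminf$/$\limsup$ bookkeeping and handling of infinite rates is slightly more careful than the paper's version, which passes the limit through the maximum directly.
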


Equation \eqref{eq:decay} shows that the budget allocation problem must account
for both types of statistical error decay rates corresponding to \(E_0\) and \(E_k\).
Since false selection occurs whenever at least one fairness verification error event or objective ranking error event occurs, the overall decay
rate is governed by the slowest decaying error event. 
Therefore, improving fair policy selection requires allocating the sampling budget to
increase the smallest of these error decay rates.

\subsection{Fairness-Guided Allocation Rule}

Based on Lemma \ref{lemma2.2}, we approximate the decay rates of the two basic errors. 
These approximations will then be combined into the overall \(\PFS_{\mathfrak f}^{\pi}(T)\) decay rate through \eqref{eq:decay} for constructing the allocation rule.
The error event \(E_k\) has different structures depending on the
true feasibility and objective value of policy \(k\). To capture
them, we partition the set of \(K\) policies into four mutually exclusive and
collectively exhaustive subsets. Let
\[
k^\star_{\mathfrak f}
=
\arg\max_k
\left\{
\mu_k\mid\varepsilon_k^{\mathfrak f}\le \kappa
\right\}
\]
denote the unique best fair policy. The remaining policies are divided into
the following three classes.
\begin{align*}
&\Gamma
=
\left\{
k\mid
\varepsilon_k^{\mathfrak f}\le \kappa,\ 
k\neq k^\star_{\mathfrak f}
\right\},\\
&\mathcal{S}_b
=
\left\{
k\mid
\mu_{k^\star_{\mathfrak f}}\leq\mu_k
\text{ and }
\varepsilon_k^{\mathfrak f}>\kappa
\right\},\\
&\mathcal{S}_w
=
\left\{
k\mid
\mu_{k^\star_{\mathfrak f}}> \mu_k
\text{ and }
\varepsilon_k^{\mathfrak f}>\kappa
\right\}.
\end{align*}
Policies in \(\Gamma\) are fair but worse than
\(k^\star_{\mathfrak f}\). Policies in \(\mathcal{S}_b\) have objective values no worse than \(k^\star_{\mathfrak f}\) but violate the fairness specification. Policies in
\(\mathcal{S}_w\) are unfair and worse than
\(k^\star_{\mathfrak f}\). We make this partition because in this way, each class
can be falsely selected for a different reason. A policy in \(\Gamma\) is already fair, so it can be
mistakenly selected only if it appears to perform as well as or better than the best fair policy.
A policy in \(\mathcal{S}_b\) already has an objective value no worse, so false selection occurs if it is mistakenly classified as fair. A policy in
\(\mathcal{S}_w\) is unfair and worse than the best fair policy, so it
can be falsely selected only if it is both mistakenly classified as fair and
incorrectly ranked as well as or better than the best fair policy.

We use the following sets to distinguish fairness comparisons that are
satisfied or violated by policy \(k\).
\begin{align*}
\mathcal{C}_F^k
&=
\left\{
(g,g')\mid
|\psi(P_{kg})-\psi(P_{kg'})|\leq \kappa,g<g'
\right\},\\
\mathcal{C}_I^k
&=
\left\{
(g,g')\mid
|\psi(P_{kg})-\psi(P_{kg'})|>\kappa,g<g'
\right\}.
\end{align*}

\begin{theorem}
\label{thm1}
Let \(k^\star=k^\star_{\mathfrak f}\). Under Assumptions \ref{ass:lal} and \ref{ass:local-rate}, the decay rate of fairness verification error \(E_0\) is
\begin{equation}
\label{eq:kstar-rate}
-\lim_{T\to\infty}
\frac{1}{T}
\log
\mathbb{P}(E_0)
=
R_{k^\star}(\bm{\alpha})
=
\min_{g<g'}
\frac{
1
}{
2H_{k^\star,gg'}^\psi(\kappa;\bm{\alpha})
}.
\end{equation}
\end{theorem}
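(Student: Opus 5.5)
The plan is to write \(E_0\) as a finite union of single-comparison verification errors and then use the standard ``largest term dominates'' principle for the exponential rate of a finite union. Since \(k^\star\) is truly fair, every pair \((g,g')\) lies in \(\mathcal C_F^{k^\star}\), and by the nonboundary assumption each such pair has strictly positive slack \(\Delta_{k^\star,gg'}^\psi(\kappa)>0\). Hence
\[
E_0=\bigl\{k^\star\notin\widehat{\mathcal F}_{\mathfrak f}\bigr\}
=\bigcup_{g<g'}\Bigl\{\bigl|\psi(\widehat P_{k^\star g})-\psi(\widehat P_{k^\star g'})\bigr|>\kappa\Bigr\}
=\bigcup_{g<g'}A_{T,k^\star,gg'},
\]
where the last equality uses that the true indicator in the definition of \(A_{T,k^\star,gg'}\) equals one for every pair of \(k^\star\).

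Next I sandwich \(\mathbb P(E_0)\). The lower bound is \(\mathbb P(E_0)\ge\max_{g<g'}\mathbb P(A_{T,k^\star,gg'})\). The upper bound comes from the union bound,
\[
\mathbb P(E_0)\le\sum_{g<g'}\mathbb P(A_{T,k^\star,gg'})\le\binom{G}{2}\max_{g<g'}\mathbb P(A_{T,k^\star,gg'}).
\]
Take \(-\frac1T\log\) of both bounds. The factor \(\binom G2\) is fixed and contributes \(\frac1T\log\binom G2\to0\). For each pair, Assumption~\ref{ass:local-rate} gives
\[
-\frac1T\log\mathbb P(A_{T,k^\star,gg'})\to\frac{1}{2H^\psi_{k^\star,gg'}(\kappa;\bm\alpha)}.
\]
This requires \(\alpha_{k^\star g}\ge\underline\alpha>0\) for every group, which I will state as a standing condition on the fixed allocation \(\bm\alpha\). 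Because the pairs are finitely many, the minimum of the limits is the limit of the minimum. Both bounds therefore converge to \(\min_{g<g'}1/(2H^\psi_{k^\star,gg'})\), and this establishes \eqref{eq:kstar-rate}. Assumption~\ref{ass:lal} enters only because it is what makes \(H\) (through \(\sigma^2_{\psi,kg}\) and Proposition~\ref{prop:local}) well defined and finite.

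The main obstacle is the degenerate allocation case. If some \(\alpha_{k^\star g}=0\), the convention sets \(V=+\infty\), so \(H=\infty\) and the claimed rate is \(0\). I would handle this separately. When a group receives only \(o(T)\) samples, the estimator for that group does not concentrate at an exponential rate in \(T\), and indeed \(\mathbb P(E_0)\) does not decay exponentially. This matches the convention, either by a direct argument or by declaring such allocations outside the uniform regime of Assumption~\ref{ass:local-rate}. With positive proportions, the only remaining point to check is that the rate condition applies with the error-event direction correct for positive slack, which I verified above.
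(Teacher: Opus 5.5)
Your proposal is correct and follows essentially the same route as the paper's proof. You write $E_0$ as the union of the pairwise events $\{|\psi(\widehat P_{k^\star g})-\psi(\widehat P_{k^\star g'})|>\kappa\}$, bound $\mathbb P(E_0)$ between the largest term and $\binom{G}{2}$ times it, and apply Assumption~\ref{ass:local-rate} to each pair. Your remarks that positive slack identifies these events with $A_{T,k^\star,gg'}$, and that the rate condition needs $\alpha_{k^\star g}>0$ for every $g$, make explicit what the paper's proof uses implicitly.
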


Theorem \ref{thm1} shows that the fairness verification for the best fair policy is only as
reliable as its most difficult fairness comparison. Even if most
group-pair fairness constraints are easy to verify, a single
near-binding or high-variance comparison can dominate the probability
that the true best fair policy is incorrectly discarded. The allocation vector
\(\bm{\alpha}\) therefore affects the \(\PFS_{\mathfrak f}^{\pi}(T)\) through the biggest verification hardness index of the best fair policy.

Let $\mathcal G_+ = \{g\in\mathcal G \mid w_g>0\}$ be groups with positive objective weights and $I_R(m)$ be the Cramér transform of objective output distribution $R$. The standard Cramér regularity condition requires the cumulant generating function to be finite in an open neighborhood of zero \citep{dembo2010large}. 

\begin{theorem}
\label{thm:mean-rate1}
Let \(k^\star=k^\star_{\mathfrak f}\), and consider a policy
\(k\) satisfying \(\mu_k<\mu_{k^\star}\). Under the Cramér regularity condition above, the
decay rate of event $\widehat{\mu}_k\geq \widehat{\mu}_{k^\star}$
is
\begin{equation}
\label{eq:mean-rate1}
J_{k,k^\star}(0;\boldsymbol{\alpha})
=
\inf_{\substack{
\{m_{kg}\}_{g\in\mathcal G_+},
\{m_{k^\star g}\}_{g\in\mathcal G_+}
\\
\sum_{g\in\mathcal G_+}w_g m_{kg}
=
\sum_{g\in\mathcal G_+}w_g m_{k^\star g}
}}
\left(
\sum_{g\in\mathcal G_+}
\alpha_{kg} I_{R_{kg}}(m_{kg})
+
\sum_{g\in\mathcal G_+}
\alpha_{k^\star g}
I_{R_{k^\star g}}(m_{k^\star g})
\right).
\end{equation}
Hence, for \(k\neq k^\star\), the decay rate of objective ranking error \(E_k\) satisfies
\[
-\lim_{T\to\infty}
\frac{1}{T}
\log
\mathbb{P}(E_k)\ge
R_k(\bm{\alpha})
=
\begin{cases}
\displaystyle
J_{k,k^\star}(0;\bm{\alpha}),
&
k\in\Gamma,
\\[8pt]
\displaystyle
\max_{(g,g')\in\mathcal C_I^k}
\frac{
1
}{
2H_{k,gg'}^\psi(\kappa;\bm{\alpha})
},
&
k\in\mathcal S_b,
\\[10pt]
\displaystyle
\max\left\{
\max_{(g,g')\in\mathcal C_I^k}
\frac{
1
}{
2H_{k,gg'}^\psi(\kappa;\bm{\alpha})
},
\;
J_{k,k^\star}(0;\bm{\alpha})
\right\},
&
k\in\mathcal S_w.
\end{cases}
\]
\end{theorem}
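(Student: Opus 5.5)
The plan has two parts. First we establish the exact rate \eqref{eq:mean-rate1} for the ranking event \(\{\widehat\mu_k\ge\widehat\mu_{k^\star}\}\) by a multi-sample Cramér (Gärtner--Ellis) argument. Then we bound \(\mathbb P(E_k)\) class by class by dropping or keeping the defining conditions of \(E_k\).

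For the first part, write \(\widehat\mu_k-\widehat\mu_{k^\star}=\sum_{g\in\mathcal G_+}w_g\bar Y_{kg}-\sum_{g\in\mathcal G_+}w_g\bar Y_{k^\star g}\); groups with \(w_g=0\) do not enter. Each \(\bar Y_{kg}\) is an average of \(N_{kg}=\alpha_{kg}T\) i.i.d. copies, and the streams are independent across pairs. By Cramér's theorem under the stated regularity condition, \(\bar Y_{kg}\) satisfies an LDP with speed \(T\) and rate \(\alpha_{kg}I_{R_{kg}}(\cdot)\). Independence gives the joint LDP for the vector \((\bar Y_{kg},\bar Y_{k^\star g})_{g\in\mathcal G_+}\) with the summed rate. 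The event is the preimage of the closed half-space \(\{\sum w_g m_{kg}-\sum w_g m_{k^\star g}\ge0\}\) under a continuous linear map. The LDP upper bound gives the infimum of the summed rate over this set. The lower bound needs the infimum over its interior to coincide with it, which follows from convexity of the \(I_R\) and the fact that \(\mu_k<\mu_{k^\star}\) puts the unconstrained minimizer (the true means) strictly inside the complement. By convexity and monotonicity along the segment toward the means, the infimum over \(\{\ge 0\}\) is attained on the boundary \(\{=0\}\), which yields \eqref{eq:mean-rate1}. Equivalently one can apply Gärtner--Ellis directly to the scalar \(\widehat\mu_k-\widehat\mu_{k^\star}\): its scaled log-MGF is \(\sum_g\alpha_{kg}\Lambda_{R_{kg}}(\lambda w_g/\alpha_{kg})+\sum_g\alpha_{k^\star g}\Lambda_{R_{k^\star g}}(-\lambda w_g/\alpha_{k^\star g})\), and its Legendre transform at \(0\) equals the inf-convolution above. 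I expect this exactness step to be the main obstacle. Care is needed where \(I_R\) is infinite, so that boundary points are approachable from the interior; I would first try the Gärtner--Ellis route, requiring steepness or simply finiteness of the cumulant generating functions near the relevant \(\lambda\).

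For the second part, the rates are lower bounds obtained by monotonicity, \(\mathbb P(E_k)\le\mathbb P(\text{any sub-event})\). For \(k\in\Gamma\), drop the feasibility conditions to get \(\mathbb P(E_k)\le\mathbb P(\widehat\mu_k\ge\widehat\mu_{k^\star})\), whose rate is \(J_{k,k^\star}(0;\bm\alpha)\) by the first part. For \(k\in\mathcal S_b\), drop the ranking and \(k^\star\)-feasibility conditions. Then \(E_k\subseteq\{k\in\widehat{\mathcal F}_{\mathfrak f}\}\subseteq A_{T,k,gg'}\) for every \((g,g')\in\mathcal C_I^k\), since empirical feasibility of \(k\) requires every truly violated comparison to be misclassified. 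Assumption~\ref{ass:local-rate} gives rate \(1/(2H^\psi_{k,gg'})\) for each such comparison, and taking the best one gives the maximum over \(\mathcal C_I^k\).

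For \(k\in\mathcal S_w\), the event \(E_k\) is contained both in \(\{k\in\widehat{\mathcal F}_{\mathfrak f}\}\) and in \(\{\widehat\mu_k\ge\widehat\mu_{k^\star}\}\). Bounding by each separately and keeping the better bound gives the stated maximum. A sharper bound would use independence: the objective outputs and the fairness outputs for the same pair may be dependent, but fairness comparisons on groups not in \(\mathcal G_+\) are independent of the ranking. The theorem only claims the maximum, so the separate bounds suffice. Finally, we note that \(\liminf\) versions of these bounds hold even when the limits do not exist.
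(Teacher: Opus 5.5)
Your proposal is correct and follows essentially the same route as the paper. The paper uses Cramér's theorem for each policy-group pair, the joint LDP by independence, the contraction principle for the linear map $\widehat\mu_k-\widehat\mu_{k^\star}$ with convexity pushing the infimum onto $t=0$, and then the bound $\mathbb P(\bigcap_i A_i)\le\min_i\mathbb P(A_i)$ after discarding conditions whose probabilities tend to one; your monotonicity argument gets the same bound without that discarding step, and your treatment of interior versus closure is more careful than the paper's appeal to a ``standard boundary approximation'' (convexity plus the true means lying in $\{t<0\}$ only places the infimum on $t=0$, so the lower bound additionally needs $0$ in the interior of the effective domain of $J_{k,k^\star}(\cdot;\bm\alpha)$, which is the caveat you already flag).
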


Theorem \ref{thm:mean-rate1} gives, under \(\mu_k<\mu_{k^\star}\), the decay rate of event \(\widehat{\mu}_k\geq \widehat{\mu}_{k^\star}\) and establishes a lower bound on the error decay rate for the objective ranking error event \(E_k\) across different classes of suboptimal or unfair policies. It shows that the confidence of discarding a suboptimal or unfair policy \(k\) is at least as strong as the largest decay rate among the three conditions defining \(E_k\), as the easiest condition to detect as false dominates the decay rate of overall probability of the joint rare event. The allocation vector \(\bm{\alpha}\) therefore affects the overall selection confidence through both the objective estimation precision and the fairness verification for each policy.

We now combine the decay rates of the fairness verification error event and the objective ranking error events to characterize the error decay rate of \(\PFS_{\mathfrak f}^{\pi}(T)\). The error-specific decay rate approximation \(R_k(\bm{\alpha})\) allows us to formulate a tractable allocation criterion. By Lemma \ref{lemma2.2}, the overall \(\PFS^{\pi}_{\mathfrak f}(T)\) decay rate equals the smallest decay rate of the error events. To maximize a lower bound on the overall decay rate of \(\PFS^{\pi}_{\mathfrak f}(T)\), we solve a max-min optimization problem over allocation vectors, which is formalized in the theorem below.

\begin{theorem}
\label{thm:rate}
Under Assumptions \ref{ass:lal} and \ref{ass:local-rate} and the Cramér regularity condition above,
\begin{equation}
\label{eq:final}
\lim_{T\to\infty}
-\frac{1}{T}\log \PFS^{\pi}_{\mathfrak f}(T)\ge\min_{k\in\mathcal K}
R_k(\bm{\alpha}),
\end{equation}
which provides the following tractable
fairness-guided allocation rule.
\begin{equation}
\label{eq:optimization}
\max_{\bm{\alpha}\in\Delta}
\min_{k\in\mathcal K}
R_k(\bm{\alpha}),
\qquad
\Delta=
\left\{
\bm{\alpha}\in\mathbb R_+^{K\times G}\mid
\sum_{k,g}\alpha_{kg}=1
\right\}.
\end{equation}
\end{theorem}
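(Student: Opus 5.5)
The plan is to combine the rate decomposition of Lemma~\ref{lemma2.2} with the component bounds of Theorems~\ref{thm1} and~\ref{thm:mean-rate1}. Because $\PFS^{\pi}_{\mathfrak f}(T)=\mathbb P(E_0\cup\bigcup_{k\neq k^\star}E_k)$ and there are only finitely many events, the union bound gives
\[
\PFS^{\pi}_{\mathfrak f}(T)\le \mathbb P(E_0)+\sum_{k\neq k^\star}\mathbb P(E_k)\le K\max\Big\{\mathbb P(E_0),\max_{k\neq k^\star}\mathbb P(E_k)\Big\}.
\]
Taking $-\frac1T\log$ and letting $T\to\infty$ removes the factor $K$. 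We then obtain
\[
\liminf_{T\to\infty}-\tfrac1T\log\PFS^{\pi}_{\mathfrak f}(T)\ge \min\Big\{\liminf -\tfrac1T\log\mathbb P(E_0),\ \min_{k\neq k^\star}\liminf -\tfrac1T\log\mathbb P(E_k)\Big\}.
\]
This is the inequality half of Lemma~\ref{lemma2.2}, written with $\liminf$ so that the component limits need not exist. Both the statement and the max-min criterion require only this lower bound.

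Next we insert the component rates. We identify $R_{k^\star}(\bm\alpha)$ with the verification rate of Theorem~\ref{thm1}, which holds under Assumptions~\ref{ass:lal} and~\ref{ass:local-rate}. This gives $-\frac1T\log\mathbb P(E_0)\to R_{k^\star}(\bm\alpha)$ whenever every $\alpha_{k^\star g}>0$. If some $\alpha_{k^\star g}=0$, then $V=\infty$, so $R_{k^\star}=0$ and the bound is trivial.

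For $k\neq k^\star$ we cite Theorem~\ref{thm:mean-rate1}, which needs the Cramér regularity condition. It gives $\liminf -\frac1T\log\mathbb P(E_k)\ge R_k(\bm\alpha)$ in each of the three classes $\Gamma,\mathcal S_b,\mathcal S_w$. The underlying reason is that $E_k$ is contained in each of its defining events. Hence $\mathbb P(E_k)$ is bounded by the smallest of the corresponding probabilities, and its rate is at least the largest of their rates. For $k\in\mathcal S_b$ or $\mathcal S_w$, the event $k\in\widehat{\mathcal F}_{\mathfrak f}$ is contained in $A_{T,k,gg'}$ for each $(g,g')\in\mathcal C_I^k$. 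For $k\in\Gamma$ or $\mathcal S_w$, the relevant event is $\{\widehat\mu_k\ge\widehat\mu_{k^\star}\}$, whose rate is $J_{k,k^\star}(0;\bm\alpha)$.

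Combining the two steps gives $\liminf -\frac1T\log\PFS^{\pi}_{\mathfrak f}(T)\ge \min_{k\in\mathcal K}R_k(\bm\alpha)$, which is \eqref{eq:final}. When the limit exists, as the statement writes it, the same bound applies to the limit.

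The allocation rule \eqref{eq:optimization} then follows as the choice of $\bm\alpha$ that maximizes this guaranteed exponent. It is well posed for two reasons. First, $\Delta$ is compact. Second, each $R_k$ is upper semicontinuous in $\bm\alpha$: the map $1/(2H)$ is continuous, with value zero on the boundary where some $\alpha=0$, and $J$ is an infimum of functions that are linear in $\bm\alpha$, hence concave and upper semicontinuous. The minimum of finitely many upper semicontinuous functions is upper semicontinuous, so it attains its maximum on $\Delta$.

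The main obstacle is uniformity at allocations where some $\alpha_{kg}$ is zero or near zero. Assumption~\ref{ass:local-rate} is uniform only for $\alpha\ge\underline\alpha$. I will handle this by restricting to the fixed-proportion regime, which is what the rate statements already presuppose. At zero coordinates the corresponding $R_k$ vanishes, so the inequality holds trivially there. The bound is needed only on the interior, where the cited theorems apply directly.
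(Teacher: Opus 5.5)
Your proposal is correct and follows essentially the same route as the paper: bound $\PFS^{\pi}_{\mathfrak f}(T)$ through the union over $E_0$ and the $E_k$, insert Theorem~\ref{thm1} for $E_0$ and Theorem~\ref{thm:mean-rate1} for each $E_k$, then use monotonicity of the minimum. Your $\liminf$ formulation and your compactness/upper-semicontinuity check for the max-min are small improvements, since the paper invokes the equality in Lemma~\ref{lemma2.2} even though Theorem~\ref{thm:mean-rate1} only bounds the $E_k$ rates from below.

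One aside needs correcting. A zero share makes the bound trivial only when it belongs to $k^\star$, because then $R_{k^\star}(\bm\alpha)=0$. For $k\neq k^\star$, $R_k(\bm\alpha)$ can stay positive, for example when $k\in\mathcal S_b$, $G\ge 3$, and a violated pair avoids the zero-share group. In that case the $1/(2H)$ terms must come from the per-comparison form of Assumption~\ref{ass:local-rate}, and the $J_{k,k^\star}$ terms fall outside the cited Cram\'er argument. The paper's proof does not address this corner either.
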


Theorem \ref{thm:rate} gives a fairness-guided allocation rule maximizing the minimum of these error-specific decay rate lower bounds, which forms the theoretical foundation of the sequential fairness-guided sampling algorithm detailed in the next subsection. The resulting allocation rule balances error decay rates across all four policy classes. It ensures reliable verification of the best fair policy, correct elimination of unfair policies with no worse objective value, accurate ranking of inferior fair alternatives, and reduction of joint errors for unfair policies with worse objective values.

\subsection{An Adaptive Allocation Algorithm}

The allocation rule in Theorem \ref{thm:rate} depends on quantities that are unknown prior to sampling, so we operationalize it through an adaptive allocation algorithm. We begin with an initial stage, where a fixed number of samples is allocated to every policy-group pair to get preliminary estimations of all objective and fairness-related quantities. These initial estimations are used to construct the empirical fair policy set, identify a provisional estimated best fair policy, and compute the proxy error-specific decay rate approximations \(\widehat R_k(\bm{\alpha})\). In each subsequent allocation iteration, we solve the max-min problem to allocate additional sampling budget to the policy-group pairs. A minimum share constraint is enforced to avoid prematurely discarding policy-group pairs that may become binding as more samples are allocated.

\begin{algorithm}[t]
\caption{Fairness-Guided Allocation (FGA) Algorithm}
\label{alg:two-stage}
\begin{algorithmic}[1]
\Require Total budget \(T\), per-round budget \(T_0\), initial sample size
\(n_0\ge 10\), minimum share \(0\le \alpha_{\min}\le 1/(KG)\), fairness metric \(\psi\),
tolerance parameter \(\kappa\); assume \(T=KGn_0+mT_0\) for some positive
integer \(m\); denote \(\Delta_{min} = \{\bm{\alpha}\in\mathbb{R}_+^{K\times G}\mid \sum_{k,g}\alpha_{kg}=1,\alpha_{kg}\geq \alpha_{min}\}\).
\State \textbf{Initial stage.} sample each \((k,g)\) pair \(n_0\) times, and set
\(N_{kg}\gets n_0\) for all \(k,g\).
\State Estimate \(\widehat\mu_k\), fairness gaps
\(\widehat d_{k,gg'}^\psi\), local slacks
\(\widehat\Delta_{k,gg'}^\psi(\kappa)\), distribution \(\widehat P_{kg}\) and local variances
\(\widehat\sigma_{\psi,kg}^2\) from the initial samples.
\State Construct the empirical fair policy set
\[
\widehat{\mathcal{F}}_{\mathfrak f}
=
\left\{
k\mid
\max_{g<g'}
\left|
\psi(\widehat P_{kg})-\psi(\widehat P_{kg'})
\right|
\le \kappa
\right\}.
\]
\State If $\widehat{\mathcal F}_{\mathfrak f}$ is nonempty, identify an estimated best policy $\widehat{k}_{\mathfrak f}
\in
\arg\max_{k\in\widehat{\mathcal{F}}_{\mathfrak f}}
\widehat\mu_k$; otherwise apply the minimum violation fallback rule from Section~\ref{sec:model}.
\State \textbf{For} round \(r=1,\dots,m\) \textbf{do}
\State Build proxy error-specific decay rate approximations
\(\widehat R_k(\bm{\alpha})\) for each policy \(k\) using
\(\widehat{k}_{\mathfrak f}\), \(\widehat\mu_k\),
\(\widehat d_{k,gg'}^\psi\),
\(\widehat\Delta_{k,gg'}^\psi(\kappa)\), \(\widehat P_{kg}\) and
\(\widehat\sigma_{\psi,kg}^2\).
\State Solve the max-min allocation problem.
\[
\widehat{\bm{\alpha}}
\in
\arg\max_{\bm{\alpha}\in\Delta_{\min}}
\min_k
\widehat R_k(\bm{\alpha}).
\]
\State Allocate \(T_0\) samples according to proportion
\(\widehat{\bm{\alpha}}\), and update the counters \(N_{kg}\).
\State Recompute \(\widehat\mu_k\),
\(\widehat d_{k,gg'}^\psi\),
\(\widehat\Delta_{k,gg'}^\psi(\kappa)\),
\(\widehat\sigma_{\psi,kg}^2\), the empirical fair policy set
\(\widehat{\mathcal{F}}_{\mathfrak f}\), and the estimated best policy
\(\widehat{k}_{\mathfrak f}\) using all samples observed so far.
\State \textbf{end for}
\State \Return the final estimated best policy
\(\widehat k_{\mathfrak f,T}\).
\end{algorithmic}
\end{algorithm}

In implementation, we solve the empirical max-min problem with a constrained multistart numerical optimizer. Mean, threshold, quantile, and CVaR influence function variances are estimated from the current samples using the formulas in the appendix. The target proportions are converted to integer batch counts by largest remainder rounding, which preserves the batch total. The minimum share parameter is set below \(1/(KG)\) and acts as an exploration safeguard.

\section{Numerical Experiments}
\label{sec:numerics}

In this section, we conduct numerical experiments to illustrate the costs of fairness and to evaluate the performance of the
proposed FGA algorithm. We consider two synthetic experiments and two
realistic service system experiments. The first synthetic experiment compares the costs of fairness and algorithmic performance across different fairness specifications, and
the second examines policy selection near a fairness tolerance boundary. The call
center and emergency department experiments evaluate the proposed algorithm in
realistic service settings. The numerical experiments have two purposes. First, they illustrate how
the fairness metric and tolerance interact with a finite sampling budget to
determine implementation difficulty. Second, they evaluate FGA as an operational mechanism for using that budget efficiently.

For comparison, we adopt several representative
R\&S procedures.
\begin{itemize}
\item \textbf{Equal Allocation (EA).}
EA uniformly allocates the total sampling budget across all
policy-group pairs. It is a naive allocation method and serves as a basic benchmark against which improvements might be measured.
\item \textbf{Objective-Only OCBA (OCBA-O).}
OCBA-O is adapted from the classical optimal computing budget allocation
framework for unconstrained R\&S \citep{chen2000simulation}. It allocates
samples based on the objective value. We extend it to our multi-group policy setting by aggregating group-level noise into a policy-level objective variance to retain the standard OCBA rule, and further split each policy's budget across groups proportionally to group-weighted estimation uncertainty.
\item \textbf{Constrained OCBA (OCBA-C).}
OCBA-C is adapted from an algorithm proposed for constrained R\&S
\citep{lee2012approximate}. It incorporates feasibility estimated from simulation, but does not
explicitly account for the policy-group structure or the
metric-specific verification difficulty of fairness constraints. We tailor its constrained allocation logic to the multi-group policy setting by constructing a two-dimensional priority score for objective ranking and fairness verification, and decomposing policy-level priorities into group-level allocation proportions. 
\item \textbf{FGA.}
FGA is our proposed fairness-guided allocation algorithm, which allocates
sampling iteratively based on the result of Theorem \ref{thm:rate}.
\end{itemize}

In
the initial stage, each policy-group pair of the compared algorithms receives \(n_0\) pilot samples,
which are used to estimate unknown quantities. In the subsequent
allocation iterations, the remaining budget is allocated according to
each algorithm's allocation rule. The algorithm performance is evaluated using three measures, with results estimated
from \(M=500\) independent macro replications for each total sampling
budget \(T\).

First, we report the empirical PCS, which is the proportion of macro replications in which the algorithm selects the true best fair policy, i.e.,
\(\widehat k_{\mathfrak f,T}=k_\mathfrak f^\star\). This is our target measure and directly reflects the goal of fair policy selection. In the experimental results, the term PCS generally refers to the empirical PCS.
Second, we report the probability of incorrect fairness verification $\mathrm{PIF}
=
\mathbb{P}\left(
\varepsilon_{\widehat k_{\mathfrak f,T}}^{\mathfrak f}
>
\kappa
\right)$, which measures the probability that the selected policy violates at
least one fairness constraint. This measure captures whether the final
selection is truly fair. Third, we assess sample efficiency through the algorithm-specific empirical implementation cost defined after \eqref{eq:burden}. For each algorithm, we record the total sampling budget at which its empirical PCS first reaches the target
level \(1-\delta\), with \(\delta=0.05\) set as the default.

Additional details on the numerical experiment setup and implementation of the compared algorithms are provided in Section \ref{sec: numerical details}.

\subsection{Synthetic Experiments}

The first synthetic experiment examines, under different fairness specifications, the costs of fairness and the corresponding algorithm performance. We consider \(K=10\) candidate policies and \(G=2\) customer groups. For policy \(k\) and group \(g\), the outcome \(W_{kg}\), representing the waiting time of a group-\(g\) customer under policy \(k\), is given by
$
    W_{kg}=\theta_{kg}+Z_k .
$

The objective value of policy \(k\) is the negative average waiting time across groups, adjusted by a small policy-specific operating cost $c_k$.
\[
    \mu_k
    =
    -
    \frac{1}{2}
    \left(
    \mathbb{E}[W_{k1}]
    +
    \mathbb{E}[W_{k2}]
    \right)
    -
    c_k .
\]
Thus, larger \(\mu_k\) indicates better aggregate performance. In the joint output notation of Section~\ref{sec:model}, one may set \(Y_{kg}=-W_{kg}-c_k\) and \(X_{kg}=W_{kg}\); the absolute disparity constraint is unchanged by a sign transformation.
We compare two noise regimes to examine how distributional shape affects implementation cost. In the light-tailed regime,
$
    Z_k\sim N(0,\sigma_k^2).
$
In the heavy-tailed regime,
\[
    Z_k
    \sim
    \mathrm{Lognormal}(m_k,s_k^2)
    -
    \mathbb{E}\!\left[
    \mathrm{Lognormal}(m_k,s_k^2)
    \right].
\]
The variance parameters are calibrated so that the two regimes have comparable overall variability but different tail behavior, which is especially relevant for high quantile performance and upper tail performance-based fairness metrics.

We compare three fairness metrics that target different regions of the waiting time distribution. Mean waiting time parity controls disparities in average waiting times, 90th quantile waiting time parity controls disparities in high waiting time outcomes, and upper tail waiting time parity using CVaR controls disparities in the expected waiting time among the worst 10\% of outcomes. These metrics allow us to examine how fairness constraints based on mean, high quantile, and upper tail performance affect the costs of fairness for policy selection.

Each algorithm is evaluated with \(n_0=30\) initial samples per policy-group pair. In the main comparison, the fairness tolerance \(\kappa\) is selected so that identifying the best fair policy remains nontrivial under all three metrics. We also vary \(\kappa\) to trace the price of fairness for each metric.
Since this experiment focuses on the costs induced by different fairness metrics, rather than on the full algorithmic
comparison, we include EA, OCBA-O, and FGA.

\begin{figure}[t]
    \centering
    \includegraphics[width=0.85\linewidth]{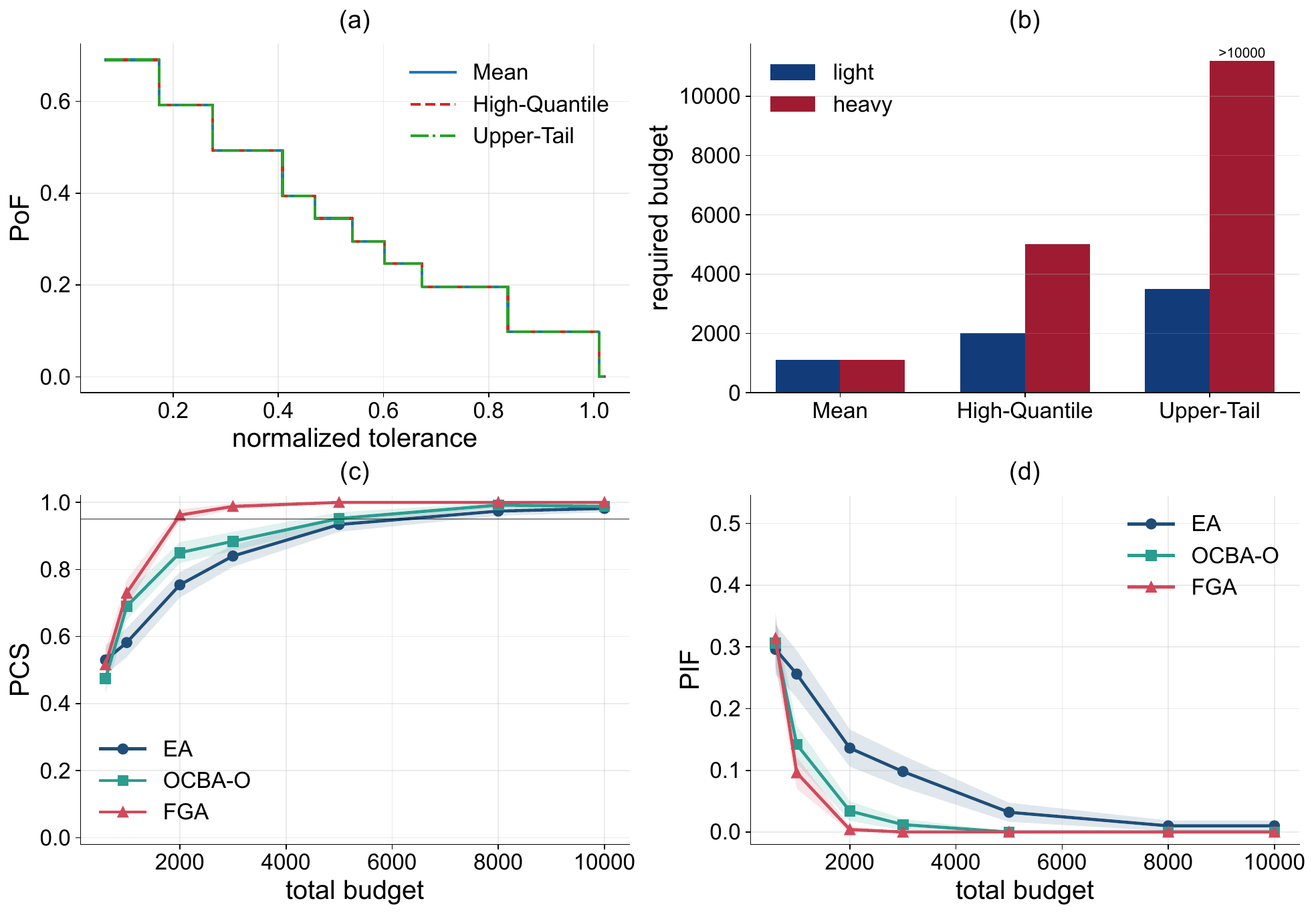}
    \caption{Synthetic Experiment I Results.
    (a) Price of fairness for three fairness specifications under the heavy-tailed noise regime.
    (b) Minimum required total budget for FGA to achieve target PCS 0.95 for each
     parity, comparing light-tailed and heavy-tailed regimes.
    (c) PCS under the 90th quantile parity constraint in
    the light-tailed regime.
    (d) PIF under the same 90th quantile parity constraint in
    the light-tailed regime.}
    \label{fig:controlled1}
\end{figure}

Figure~\ref{fig:controlled1} summarizes the comparison results. 
Panel~(a) shows that mean waiting time parity, 90th quantile waiting time parity, and upper tail waiting time parity lead to the same price-of-fairness frontier. Thus, with infinite data, the three metrics with given fairness tolerances impose the same population-level operational cost.
Panel~(b) shows that mean parity requires the smallest budget to attain the target PCS for FGA, while tail-sensitive parities require much larger budgets. The gap is most pronounced under heavy-tailed noise, where upper tail waiting time parity requires more than three times the budget of mean waiting time parity, and 90th quantile waiting time parity also incurs a substantially larger cost. The crossing budget reported for FGA is its algorithm-specific implementation cost, \(B_{\mathfrak f}^{\mathrm{FGA}}(0.05)\). It provides a common implementation-oriented proxy across metrics without asserting that FGA attains the theoretical optimum. The observed budget differences therefore show that the three specifications impose substantially different evidence requirements under the same allocation procedure.

These results demonstrate that operational cost equivalence in the asymptotic limit does not translate into implementation cost equivalence in practice. The substantial budget discrepancies underscore the importance of explicitly accounting for verification difficulty when selecting a fairness metric, especially under heavy-tailed distributions. Ignoring these finite-sample challenges risks fair policy selections failing to meet the target PCS level.

Panels~(c) and~(d) focus on 90th quantile parity under light-tailed noise. Panel~(c) shows that FGA achieves higher PCS than the benchmarks throughout the tested budget range. Panel~(d) shows that FGA also reduces the PIF more rapidly.

Figure~\ref{fig:cvar_results} in Section~\ref{sec: first synthetic} shows the comparison results for upper-tail waiting-time parity under both light-tailed and heavy-tailed regimes, which illustrates that tail-sensitive fairness metrics are particularly difficult to verify under heavy-tailed outcome distributions.

The second synthetic experiment examines a setting near a critical fairness tolerance boundary, where the best fair policy is hard to distinguish because it lies close to the boundary and is difficult to distinguish from high-performing but unfair alternatives. This setting is motivated by the critical fairness tolerance discussed in Section \ref{sec:wedge}.

We consider a selection problem with \(K=12\) candidate policies and \(G=2\)
groups. With fairness tolerance \(\kappa=0.1025\) and \(\kappa=0.098\), policies 1-3 have the highest
objective values but are unfair. Policy 4 is the true best fair policy,
lying just below the fairness boundary. Policies 5-12 are fair but have
lower objective values. 

For each policy-group pair, outcomes are generated as
$
    X_{kg}
    \sim
    N(m_{kg},\sigma_{kg}^2).
$
The group-specific means are calibrated so that $(m_{k1}+m_{k2})/2
    =
    \mu_k$ and $ |m_{k1}-m_{k2}|
    =
    D_k$.
Thus, \(\mu_k\) controls the average performance of policy \(k\), while \(D_k\)
controls its group-level fairness disparity. The setting uses a common
variance \(\sigma_{kg}^2=0.2^2\). The fairness specification is mean performance parity between the two groups.

We evaluate each algorithm with \(n_0=20\) initial samples per policy-group
pair. To identify the sources of performance differences, we decompose
selection errors into three categories, which may overlap because a single
false selection can reflect multiple failure events. \textbf{Type I errors}
select a better but unfair policy (policies 1-3), reflecting failure to
verify the fairness constraints; \textbf{Type II errors} classify the true
best fair policy (policy 4) as unfair and discard it, reflecting failure to
verify the best fair policy; and \textbf{Type III errors} select a fair
but worse policy (policies 5-12), reflecting failure to rank fair policies
correctly.

\begin{figure}[t]
    \centering
    \includegraphics[width=0.85\linewidth]{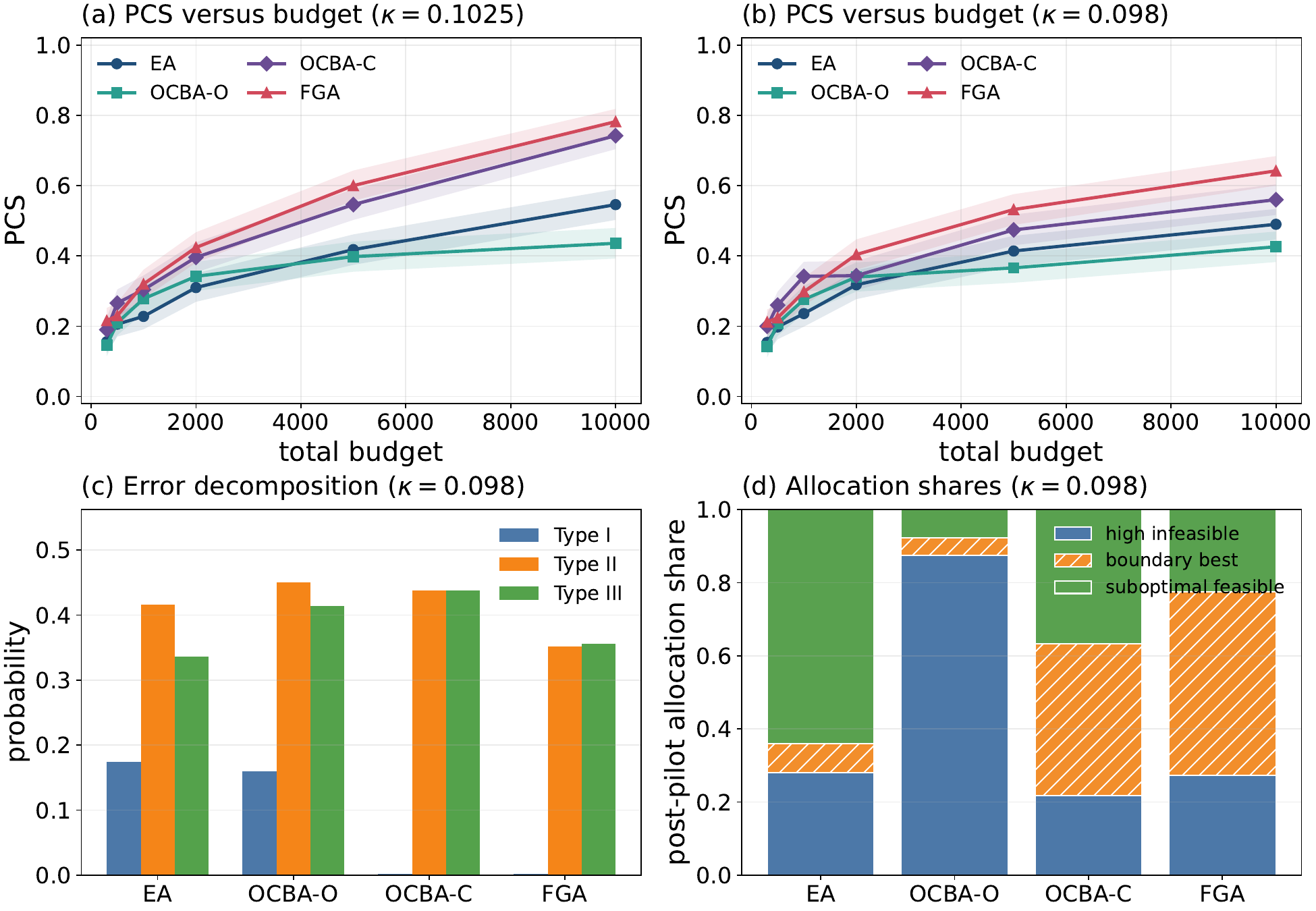}
    \caption{Synthetic Experiment II Results.
    (a) PCS at \(\kappa=0.1025\).
    (b) PCS at \(\kappa=0.098\).
    (c) Error type decomposition (Type I/II/III) across algorithms at \(T=10000\) with \(\kappa=0.098\).
    (d) Budget allocation decomposition at \(T=10000\) with \(\kappa=0.098\).}
    \label{fig:controlled2}
\end{figure}

Figure~\ref{fig:controlled2} summarizes the results. Panel (a) shows that FGA achieves the highest PCS across almost the entire sampling budget range, followed by OCBA-C. FGA and OCBA-C achieve PCS higher than \(0.7\) at budget \(T=10000\). Panel (b) also shows that FGA achieves the highest PCS across almost the entire sampling budget range, followed by OCBA-C. None of the algorithms achieve PCS higher than \(0.7\), even at budget \(T=10000\). This demonstrates that the budget required for explicitly selecting the best fair policy in fairness tolerance boundary settings is substantial. 

By comparing Panel (a) and Panel (b), we can substantiate the result proposed in Section \ref{sec:wedge}. Specifically, under the same fairness metric but with different tolerance levels \(\kappa\) sufficiently close to the critical fairness tolerance, the budgets required to achieve the PCS level \(1-\delta\) are considerably different when using the same algorithm. A tolerance closer to the critical value demands a larger budget. This observation indicates that the two fairness specifications incur different implementation costs, thereby illustrating the existence of an implementation gap. The tolerance is therefore not only a statement about the disparity an organization is willing to accept; it is also a decision that determines how difficult feasibility is to verify. Meanwhile, because both cases produce the same best fair policy, their operational costs remain identical, whereas their implementation costs differ.

Panel (c) decomposes the selection errors and helps explain the performance differences between algorithms. EA and OCBA-O have high Type I error probabilities. EA, OCBA-O and OCBA-C have relatively high Type II error probabilities and OCBA-O and OCBA-C have relatively high Type III error probabilities. In contrast, FGA reduces those error probabilities by directing sampling effort toward the
comparisons that are most relevant for fair selection.

Panel (d) decomposes allocated budget across three mutually
exclusive policy classes, namely better and unfair policies, the best fair
policy, and worse and fair policies. Worse and unfair policies are excluded
from this decomposition because they are not critical in this
experiment. EA spreads samples across policies
without regard to their relevance for the final fair selection decision, samples are allocated uniformly across all policy-group
pairs, so the allocation share of each class is determined by the number of
policies in that class. This provides a natural reference point for interpreting
the other methods. Compared with this baseline, OCBA-O allocates too much
budget to better and unfair policies. OCBA-C and FGA,
by contrast, allocate more budget to the best fair policy and its most
relevant competitors, where additional data has the greatest value.

These numerical results confirm that fairness-guided allocation is valuable when the best fair policy lies near the fairness boundary. In such regimes, algorithms that do not explicitly account for the difficulty of fairness verification are prone to costly finite-sample errors. Also, these results provide additional evidence that operational cost equivalence does not imply implementation cost equivalence.

\subsection{Call Center Experiment}

We study a multi-server queueing system with abandonment and \(G=2\)
customer groups. Customer arrivals are nonstationary, service times are
group-dependent, and customers may abandon the system if service has not
started before their patience time expires. The primary service burden outcome
is the waiting time \(W_{kg}\) for group \(g\) under policy \(k\). Detailed
distributional parameters and simulation settings are provided in
Section~\ref{sec: call center exp} of the appendix.

We consider \(K=12\) candidate policies, each combining a priority discipline
with a congestion threshold \(b\in\{5,10,15\}\). The priority disciplines
include first-come-first-served (FCFS), static priority for group 1, static
priority for group 2, and weighted priority. Under a threshold policy, the
system operates under FCFS when the total queue length is below \(b\) and
switches to the designated priority rule when the threshold is exceeded. This
policy class creates a tradeoff between average waiting time, group-level
waiting time disparity, and abandonment risk.

The objective function combines average waiting time, abandonment probability,
and policy complexity.

$$
    \mu_k
    =
    -
    \sum_{g=1}^{G}
    w_g
    \mathbb{E}[W_{kg}]
    -
    c_A
    \sum_{g=1}^{G}
    w_g
    \mathbb{P}(A_{kg}=1)
    -
    c_C C_k ,
$$
where \(w_1=w_2=0.5\), \(c_A=8.0\), and \(c_C=0.20\). Higher objective
values correspond to better aggregate performance. In the notation of
Section~\ref{sec:model},
\(Y_{kg}=-W_{kg}-c_A A_{kg}-c_C C_k\), while the fairness output is
\(X_{kg}=W_{kg}\).

The fairness constraint imposes upper tail waiting time parity based on the
CVaR of the worst \(10\%\) of waits. A policy is fair if the difference
between the two customer groups in upper tail CVaR waiting time does not exceed
\(\kappa=184.42\). For customers who abandon, \(W_{kg}\) is the time spent
waiting before abandonment, so abandonment affects both the aggregate
objective and the waiting time distribution used for fairness assessment.

Each algorithm begins with \(n_0=20\) replications per policy-group pair.
One independently seeded simulated day is one replication and serves as the
independent sampling unit; customer waits observed within a day are retained as
a batch. Additional implementation details are provided in the appendix.

\begin{figure}[t]
    \centering
\includegraphics[width=0.85\linewidth]{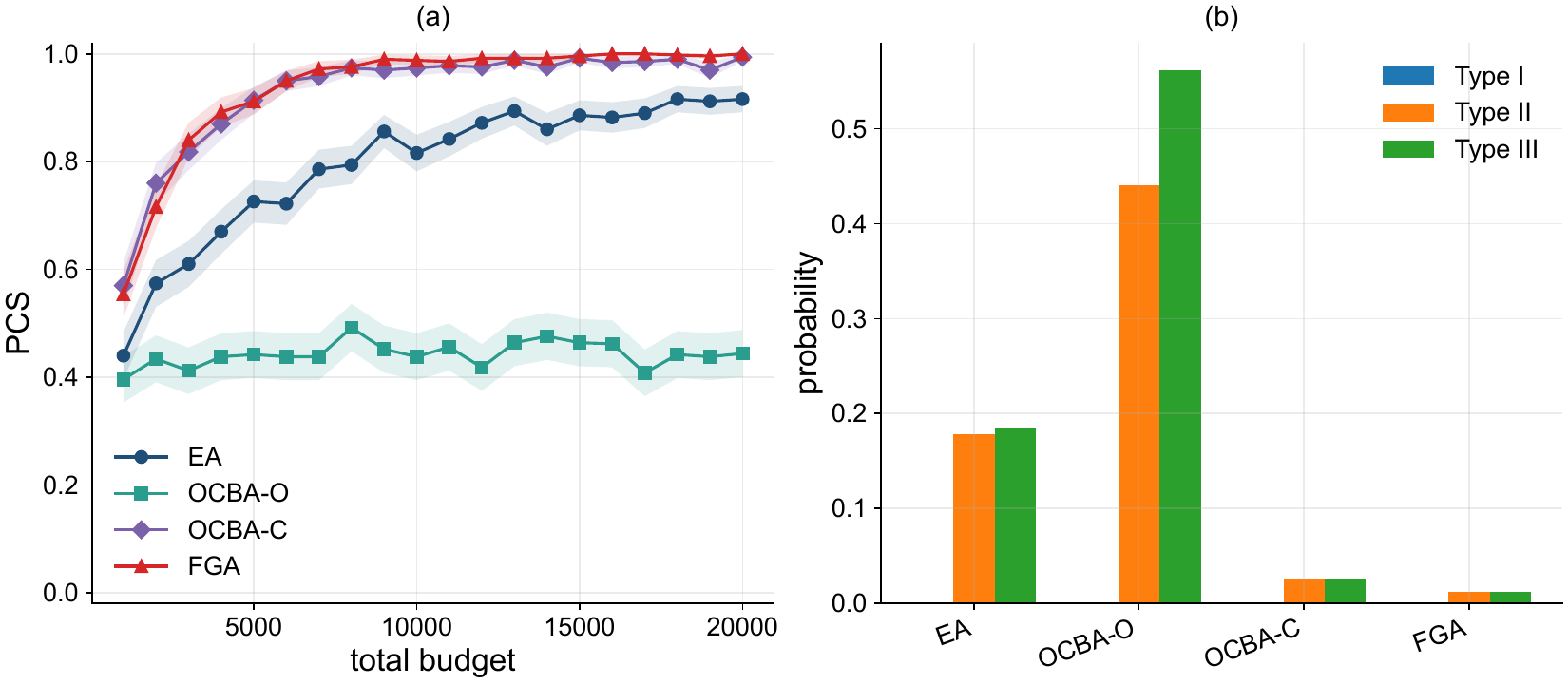}
    \caption{Service System Experiment Results.
    (a) PCS for different algorithms under the upper tail waiting time parity constraint.
    (b) Error type decomposition across algorithms at \(T=20000\).}
    \label{fig:service}
\end{figure}

Figure~\ref{fig:service} summarizes the results. Panel (a) shows that FGA achieves the highest or near-highest PCS in the sampling
budget range, closely followed by OCBA-C. EA improves more slowly and remains
less reliable at large budgets, while OCBA-O performs poorly because it focuses
only on objective performance and ignores the fairness constraint.
Panel (b) decomposes the selection errors at \(T=20000\). The main error types for EA and OCBA-O are discarding the best fair policy and misranking among fair policies. Both OCBA-C and FGA reduce these errors, with FGA achieving the best overall performance. According to these results, the implementation cost of fairness also arises in realistic service systems. In a congested system with abandonment and priority-based routing, tail-sensitive fairness constraints can be difficult to verify from a limited sampling budget. As a result, allocation rules that focus only on aggregate performance may favor policies that appear efficient but cannot be reliably verified as fair.

From a service design perspective, adopting a tail-sensitive fairness
requirement therefore creates a commitment of sampling budget as well as an
operational commitment. Before treating the selected policy as
deployment-ready, managers must assess whether the available
sampling budget is sufficient for the chosen metric and tolerance. FGA helps
use that budget efficiently, but it does not substitute for sufficient evidence.

\subsection{Emergency Department Experiment}
\label{sec:ed}

We consider an emergency department queueing system with \(G=2\) patient
groups over a 12-hour operating day. The system features nonstationary
arrivals, heterogeneous patient acuity, communication-support needs,
fast-track capacity, and abandonment. Patients pass through triage and
treatment resources, with some patients requiring additional communication
support. Detailed arrival, staffing, service time, patience, and
communication-support parameters are provided in
Section~\ref{sec: healthcare exp} of the appendix.

The service burden outcome is the waiting time \(W_{kg}\) for group \(g\)
under policy \(k\). For high-acuity patients, waiting time is measured as
door-to-provider time, and for low-acuity patients, it is measured from
arrival to completion. For patients who abandon, waiting time is the time
spent in the system before abandonment.

We consider \(K=12\) candidate policies that vary along two operational
dimensions, fast-track routing and priority assignment. The policy set
includes an FCFS baseline, acuity-based priority rules, static fast-track
rules, and fairness-aware fast-track rules. Static fast-track rules vary the
extent to which low-acuity patients are routed to the fast-track stream,
whereas fairness-aware fast-track rules further adjust same-acuity priorities
to reduce group-level waiting time disparities.

The objective combines service guarantee satisfaction, abandonment, and
policy complexity.

$$
    \mu_k
    =
    \sum_{g=1}^G w_g \mathbb{E}[Q_{kg}]
    -
    c_A \sum_{g=1}^G w_g \mathbb{P}(A_{kg}=1)
    -
    c_C C_k ,
$$
where \(Q_{kg}\) indicates whether the service guarantee is met,
\(A_{kg}\) indicates abandonment, \(w_1=w_2=0.5\), \(c_A=0.5\), and
\(c_C=0.01\). In the notation of Section~\ref{sec:model},
\(Y_{kg}=Q_{kg}-c_A A_{kg}-c_C C_k\), while the fairness output is
\(X_{kg}=W_{kg}\). The service guarantee requires door-to-provider time
within 30 minutes for high-acuity patients and length of stay within
120 minutes for low-acuity patients.

We consider three fairness metrics for group-level waiting time burdens: mean waiting time performance parity, the 90th quantile of waiting time parity, and upper tail parity using CVaR controls disparities in the expected waiting time among the worst 10\% of outcomes. For the main upper tail parity experiment, the fairness tolerance is \(\kappa=38\). The
candidate policies include high-performing policies that violate the
fairness constraint, a best fair policy close to the fairness boundary, and
more conservative fair policies with lower objective values. This structure
allows us to examine whether an allocation rule can simultaneously avoid
unfair high-performing policies and verify a near-boundary best fair policy.

As in the call center experiment, one independently seeded simulated day is
one outer replication and serves as the independent sampling unit. Additional
replication and estimation details are provided in the appendix.

\begin{figure}[htbp]
    \centering
    \includegraphics[width=0.85\textwidth]{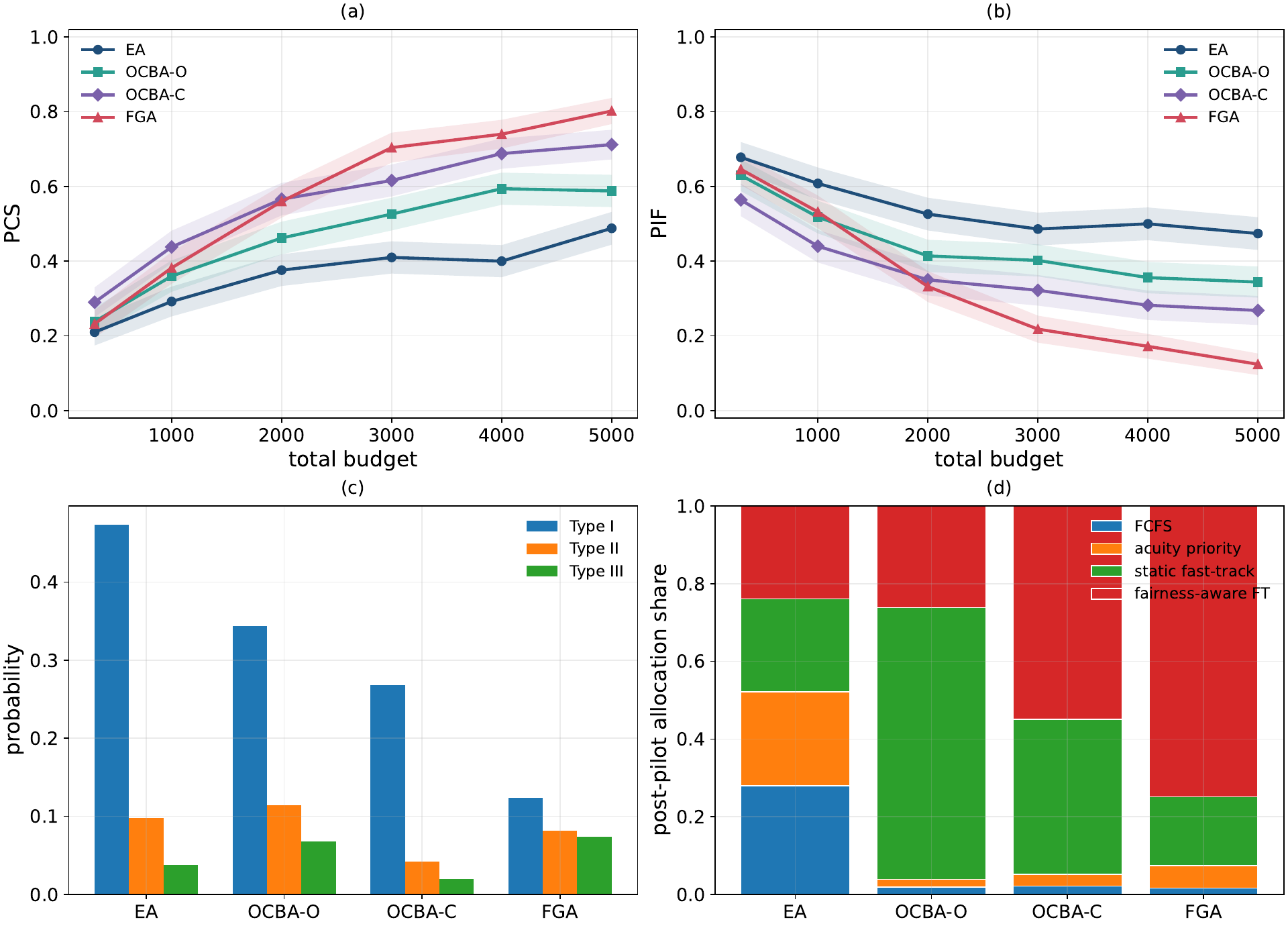}
    \caption{
    Emergency Department Experiment under Upper Tail Parity.
    (a) PCS of the algorithms.
    (b) PIF of the algorithms.
    (c) Error type decomposition across algorithms at \(T=5000\).
    (d) Budget allocation decomposition at \(T=5000\).
    }
    \label{fig:healthcare_cvar}
\end{figure}

Figure~\ref{fig:healthcare_cvar} reports the emergency department queueing experiment results under upper tail parity. Panel~(a) compares the tested algorithms. FGA attains the largest PCS and reaches about 0.8 at \(T=5000\). OCBA-C is the closest competitor, while EA and OCBA-O improve more slowly and remain below the two fairness-aware algorithms at large budgets. 
Panel~(b) shows that FGA also has the lowest PIF with budget above \(T=2000\). Its PIF decreases steadily as the budget increases, indicating that the algorithm more effectively rules out unfair policies. In contrast, EA and OCBA-O retain relatively high PIF even at larger budgets. 

Panel~(c) decomposes the  selection errors at \(T=5000\). EA and OCBA-O are dominated by Type~I errors, reflecting frequent selection of unfair policies. OCBA-C reduces this error but still exhibits a nonnegligible Type~I error probability. FGA achieves the smallest Type~I error and a balanced reduction across all error types, leading to the best overall selection performance.
Panel~(d) explains these differences through the decomposition of the allocated budget. EA acts as the proportional baseline here, where the share of each policy class is strictly determined by the number of policies in that category. FGA allocates a larger fraction of the sampling budget to fairness-aware fast-track, while OCBA-C also devotes substantial effort to both fairness-aware fast-track and static fast tracking. By contrast, OCBA-O concentrates heavily on static fast tracking. These allocation patterns show that explicitly targeting difficult fairness verification comparisons improves both fairness control and finite-budget PCS in the emergency department setting. 

We also report results for mean performance parity and high quantile parity in Section \ref{sec: healthcare exp}.
For comparison, the fairness tolerances are calibrated for each metric such that all three parities yield the same best fair policy and the same price of fairness (i.e., operational cost). Nevertheless, under a fixed sampling budget, the three parities achieve different finite-sample PCS values. This implies that different parities incur different implementation costs. With a fixed sampling budget \(T=5000\), a target PCS level of 0.85 is statistically implementable for the mean performance parity setting, but not for the high quantile or upper tail parity settings.
These results reinforce the core finding that operational cost equivalence does not guarantee implementation cost equivalence.

Thus, the choice of fairness metric determines whether an otherwise operationally equivalent fairness specification can be supported by the available budget. For an emergency department manager, this means that metric choice cannot be separated from budget planning. Tail-sensitive fairness specifications may be substantively desirable because they protect patients from severe waiting burdens, but they may require a larger sampling budget before the corresponding policy can be selected with the same statistical reliability. The appropriate design therefore depends jointly on the fairness metric, the tolerance, and the available sampling budget.

\section{Conclusions and Discussion}
\label{sec:conclu}

This paper studies two costs associated with fairness in service policy
selection when the distributional quantities defining fairness must be
estimated under a finite sampling budget. The operational cost is the familiar
price of fairness, which measures the aggregate performance loss induced by
restricting attention to fair policies. The implementation cost of fairness is
the minimum sampling budget required to verify policy feasibility and reliably
identify the best fair policy at a target PCS.

Our analysis identifies two mechanisms. In common-shape location-shift environments, translation-equivariant metrics can generate the same fair policy set and price of fairness while their influence function variances produce different local verification hardness. Near a critical tolerance, the budget required for reliable selection grows at an inverse-square rate in the distance to the boundary. These results connect the population fairness decision to the finite budget available for implementing it. The fairness-guided allocation algorithm uses this connection by directing samples toward the objective and feasibility comparisons that govern false selection.

Our analysis also points to three broader considerations for fair service
policy selection. First, fairness is not only a constraint on admissible service policies but also a commitment that requires sufficient evidence before implementation. When the quantities defining fairness must be estimated, the metric and tolerance determine both operational consequences and the evidence required for reliable verification.

Second, fairness specifications with similar operational consequences can have substantially different implementation requirements. Two specifications may induce the same fair policy set and price of fairness, yet differ sharply in the sampling budget required for reliable verification. Implementation cost therefore captures whether a fairness requirement can support reliable policy selection at the available budget.

Third, data collection, policy selection, and fairness design should be treated as parts of the same operational decision. A service organization chooses not only a policy and a fairness requirement, but also the outcomes to measure, the sampling budget to provide, and the comparisons toward which simulation or experimental effort should be directed. It should therefore design the fairness metric \(\psi\), the fairness tolerance \(\kappa\), and the available sampling budget \(T\) jointly. The metric determines what aspect of service experience is protected; the tolerance determines how much disparity is accepted and where the feasibility boundary lies; and the budget determines whether those distinctions can be established with the desired reliability. This joint view is not limited to the service settings considered here. As organizations increasingly use data-driven systems to allocate capacity, set priorities, and govern access to scarce services, responsible implementation requires both an appropriate fairness requirement and sufficient evidence to verify that the requirement is satisfied.


\bibliographystyle{plainnat}
\bibliography{ref}


\clearpage
\appendix

\section*{Appendix for ``The Implementation Cost of Fairness in Service Policy Selection''}

This appendix provides supplementary  materials for the main paper. It is organized as follows. Section \ref{app:lal-proof} verifies Assumption \ref{ass:lal} for all four fairness metrics considered in this research. Section \ref{calculation} derives the full procedure for solving \(J_{k,k^\star_{\mathfrak{f}}(\kappa)}(0)\) from Theorem \ref{thm:mean-rate1} and provides closed-form results under the normal distributions. Section \ref{app:proofs} presents proofs of the theoretical results in the main paper. Section \ref{sec: numerical details} provides full implementation details for the numerical experiments.

\section{Verification for Assumption~\ref{ass:lal}}
\label{app:lal-proof}

This section verifies Assumption~\ref{ass:lal} for the four fairness
metrics used in the paper. Throughout the section, we fix a policy-group
pair \((k,g)\) and omit the subscripts. Let \(X \sim P\) denote the random variable of a single service outcome. Let \(X_1, X_2, \dots, X_n\) be i.i.d. outcomes drawn from \(P\), with cumulative distribution function \(F\). Let \(\widehat P_n\) and \(\widehat F_n\) denote the
empirical distribution and empirical distribution function, respectively.
For any measurable function \(h\), write
\[
    Ph=\int h(x)\,dP(x),
    \qquad
    \widehat P_n h
    =
    \int h(x)\,d\widehat P_n(x)
    =
    \frac1n\sum_{i=1}^n h(X_i).
\]
All asymptotic statements are taken as \(n\to\infty\) for this fixed
policy-group pair.

We consider the following four metrics.
\[
    \psi_{\mathrm{mean}}(P)=\mathbb E_P[X],
\quad 
    \psi_{s}(P)=P(X\le s),
\]
\[
    \psi_{Q_\beta}(P)=Q_\beta(P)
    =
    \inf\{x\mid F(x)\ge \beta\},
\]
and
\[
    \psi_{\mathrm{cvar}_\beta}(P)
    =
    \CVaR_\beta^+(P)
    =
    q_\beta
    +
    \frac{1}{1-\beta}\mathbb E_P[(X-q_\beta)_+],
    \qquad
    q_\beta=Q_\beta(P),
\]
where \(s\in\mathbb R\) is a fixed service threshold,
\(\beta\in(0,1)\) is fixed, and \((a)_+=\max\{a,0\}\).

For each policy-group pair \((k,g)\), consider the following regularity classes for distribution \(P\).

\begin{enumerate}
    \item \textbf{Mean performance metric.}
    If \(\psi(P)=\mathbb E_P[X]\), assume that $0<\operatorname{Var}_P(X)<\infty$.
    \item \textbf{Under-service risk metric.}
    If \(\psi(P)=P(X\le s)\), define $p_s=P(X\le s)$, and assume
    \(
        0<p_s<1.
    \)
    \item \textbf{High quantile metric.}
    If \(\psi(P)=Q_\beta(P)\), let
    \(
        q_\beta=Q_\beta(P).
    \)
    Assume that $F$ has at least two derivatives in some neighborhood of $q_\beta$,
    that $F''(x)$ is bounded in the neighborhood, and that $F'(q_\beta) = f(q_\beta) > 0$.
    These assumptions imply, in particular, that $0 < \beta < 1$ and that $q_\beta$ is the unique $\beta$-quantile of $F$.

    \item \textbf{Upper tail metric.}
    If
    \[
        \psi(P)
        =
        \CVaR_\beta^+(P)
        =
        q_\beta
        +
        \frac{1}{1-\beta}\mathbb E_P[(X-q_\beta)_+],
    \]
    assume the quantile regularity class in high quantile performance metric at
    \(q_\beta=Q_\beta(P)\). In addition, assume $\mathbb E_P[(X-q_\beta)_+^2]<\infty$.
\end{enumerate}

\begin{proposition}
\label{class}
Given fairness metric \(\psi\), for distribution \(P\) in the corresponding regularity class,
\[
    \sqrt n\{\psi(\widehat P_n)-\psi(P)\}
    =
    \frac1{\sqrt n}\sum_{i=1}^n IF_\psi(X_i;P)+o_p(1),
\]
where
\[
    IF_{\psi_{\mathrm{mean}}}(x;P)
    =
    x-\mathbb E_P[X],
\]
\[
    IF_{\psi_{s}}(x;P)
    =
    \mathbf 1\{x\le s\}-P(X\le s),
\]
\[
    IF_{\psi_{Q_\beta}}(x;P)
    =
    \frac{\beta-\mathbf 1\{x\le q_\beta\}}{f(q_\beta)},
\]
\[
    IF_{\psi_{\mathrm{cvar}_\beta}}(x;P)
    =
    q_\beta
    +
    \frac{(x-q_\beta)_+}{1-\beta}
    -
    \CVaR_\beta^+(P).
\]
Moreover, $\mathbb E_P[IF_\psi(X;P)]=0$, $0<\operatorname{Var}_P\{IF_\psi(X;P)\}
    <\infty$, where 
\(\mathbf{1}_{A}\) denotes the indicator function of event \(A\).
\end{proposition}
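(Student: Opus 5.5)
The proof goes metric by metric. Each time, I will write \(\psi(\widehat P_n)-\psi(P)\) as an empirical average of the stated influence function plus a remainder of order \(o_p(n^{-1/2})\). The moment claims then follow by direct computation.

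\textbf{Mean and under-service risk.} These two cases are exact and need no remainder argument. We have
\[
\widehat P_n X-\mathbb E_P[X]=\frac1n\sum_{i=1}^n (X_i-\mathbb E_P X),
\qquad
\widehat F_n(s)-F(s)=\frac1n\sum_{i=1}^n\big(\mathbf 1\{X_i\le s\}-p_s\big).
\]
So the expansion holds with a remainder identically zero. Each influence function has mean zero by construction. The variances are \(\operatorname{Var}_P(X)\) and \(p_s(1-p_s)\), and both lie in \((0,\infty)\) by the regularity assumptions.

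\textbf{High quantile.} Here I will invoke the Bahadur representation \citep{bahadur1966note}. Its hypotheses are that \(F\) is twice differentiable near \(q_\beta\), that \(F''\) is bounded there, and that \(f(q_\beta)>0\); these are exactly the stated class. Under them,
\[
\widehat Q_\beta-q_\beta=\frac{\beta-\widehat F_n(q_\beta)}{f(q_\beta)}+R_n,
\qquad
R_n=O\big(n^{-3/4}(\log n)^{3/4}\big)\ \text{a.s.},
\]
so \(\sqrt n R_n\to0\) and the stated influence function follows. If one prefers not to cite Bahadur, the fallback route is:
\begin{enumerate}
\item use the switching relation \(\{\widehat Q_\beta\le q_\beta+t/\sqrt n\}=\{\widehat F_n(q_\beta+t/\sqrt n)\ge\beta\}\);
\item apply the CLT for \(\widehat F_n\) at the point \(q_\beta\);
\item use stochastic equicontinuity of the empirical process near \(q_\beta\);
\item Taylor-expand \(F\) to second order, which is where the bounded \(F''\) is needed.
\end{enumerate}
The mean of the influence function is \((\beta-F(q_\beta))/f=0\), since \(F\) is continuous at \(q_\beta\). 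The variance is \(\beta(1-\beta)/f(q_\beta)^2\), which is positive and finite.

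\textbf{Upper tail.} This is the main obstacle. Let \(\phi(z;Q)=z+(1-\beta)^{-1}Q(X-z)_+\) and let \(\widehat q=\widehat Q_\beta\). Because \(\widehat q\) minimizes \(\phi(\cdot;\widehat P_n)\), we have \(\psi(\widehat P_n)=\phi(\widehat q;\widehat P_n)\); this is the Rockafellar--Uryasev form \citep{rockafellar2000optimization}. I will decompose
\[
\psi(\widehat P_n)-\psi(P)=\big[\phi(q_\beta;\widehat P_n)-\phi(q_\beta;P)\big]+\big[\phi(\widehat q;\widehat P_n)-\phi(q_\beta;\widehat P_n)\big].
\]

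The first bracket equals \((1-\beta)^{-1}(\widehat P_n-P)(X-q_\beta)_+\). This is an i.i.d. average of the stated influence function, which is centered because \(\psi(P)=q_\beta+(1-\beta)^{-1}\mathbb E(X-q_\beta)_+\). Its finite variance comes from \(\mathbb E(X-q_\beta)_+^2<\infty\).

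For the second bracket, \(z\mapsto\phi(z;\widehat P_n)\) is convex and piecewise linear, with right derivative \(1-(1-\beta)^{-1}(1-\widehat F_n(z))\). Hence the bracket is bounded in absolute value by
\[
|\widehat q-q_\beta|\cdot\sup_{z\text{ between }q_\beta,\widehat q}\left|1-\frac{1-\widehat F_n(z)}{1-\beta}\right|.
\]
The first factor is \(O_p(n^{-1/2})\) by the quantile step. For the supremum, use that \(\widehat F_n(z)-\beta\) is \(O_p(n^{-1/2})\) uniformly over the shrinking interval between \(q_\beta\) and \(\widehat q\), by continuity of \(F\) and the DKW inequality. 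The bracket is therefore \(O_p(n^{-1})=o_p(n^{-1/2})\).

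The remaining point is positivity of the variance. Here \(\operatorname{Var}\big((X-q_\beta)_+\big)>0\) because \(f(q_\beta)>0\): \(X\) puts positive mass on both sides of \(q_\beta\) and has a nondegenerate distribution just above it.

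With all four expansions established and each influence function shown to be centered with variance in \((0,\infty)\), Assumption~\ref{ass:lal} is verified for the four metrics.
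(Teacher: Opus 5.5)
Your proposal is correct. For the mean, under-service risk, and quantile metrics you follow the paper: exact linearity for the first two and the Bahadur representation for the third. For the upper tail metric you take a genuinely different route.

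The paper writes the plug-in estimator as $\widehat q+(1-\beta)^{-1}\widehat P_n g_{\widehat q}$ with $g_t(x)=(x-t)_+$ and splits $\psi(\widehat P_n)-\psi(P)$ into four terms. It uses differentiability of $G(t)=\mathbb E_P[(X-t)_+]$ at $q_\beta$, with $G'(q_\beta)=-(1-\beta)$, so that $P(g_{\widehat q}-g_{q_\beta})$ cancels the $(\widehat q-q_\beta)$ term. It then controls the cross term $(\widehat P_n-P)(g_{\widehat q}-g_{q_\beta})$ by a VC-subgraph/Donsker stochastic equicontinuity argument.

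You instead work with your Rockafellar--Uryasev objective $\phi(\cdot;\widehat P_n)$ at the empirical level. The first bracket is exactly the i.i.d. sum. The second bracket is the change of a convex piecewise-linear function between $q_\beta$ and its minimizer $\widehat q$, where the slope $(\widehat F_n(z)-\beta)/(1-\beta)$ is uniformly small.

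Both proofs exploit the same envelope-type fact: the objective is flat at its minimizer. The paper applies it to the population objective and handles the random evaluation point with empirical-process theory. Yours applies it to the empirical objective. It needs only $\widehat q-q_\beta=O_p(n^{-1/2})$ and DKW, with no Donsker class or envelope condition. It also gives the sharper remainder $O_p(n^{-1})$; the second bracket is in fact always $\le 0$. Your route also justifies $\psi(\widehat P_n)=\phi(\widehat q;\widehat P_n)$ for the discrete empirical law, namely that the lower empirical quantile is a minimizer, which the paper uses implicitly.

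One small precision concerns the uniform $O_p(n^{-1/2})$ bound on $|F(z)-\beta|$ over the shrinking interval. This needs $F$ to be locally Lipschitz at $q_\beta$, not merely continuous. The regularity class supplies this, since $f$ is continuous and hence bounded near $q_\beta$. In any case, continuity alone gives $o_p(1)$, which already suffices for an $o_p(n^{-1/2})$ remainder.
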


\proof{Proof of proposition \ref{class}}
We verify the result case by case.

\paragraph{Mean performance metric}
Let
\(
    \psi(P)=\mathbb E_P[X]=\mu.
\)
Then $ \psi(\widehat P_n)
    =
    \frac1n\sum_{i=1}^n X_i$.
Therefore,
\[
\begin{aligned}
    \sqrt n\{\psi(\widehat P_n)-\psi(P)\}
    =
    \sqrt n
    \left\{
        \frac1n\sum_{i=1}^n X_i-\mu
    \right\} 
    =
    \frac1{\sqrt n}\sum_{i=1}^n (X_i-\mu).
\end{aligned}
\]
Thus the LAL holds exactly, and
\(
    IF_{\psi_{\mathrm{mean}}}(x;P)=x-\mu.
\)
Also,
\(
    \mathbb E_P[IF_{\psi_{\mathrm{mean}}}(X;P)]=0,
\)
and
\[
    \operatorname{Var}_P\{IF_{\psi_{\mathrm{mean}}}(X;P)\}
    =
    \operatorname{Var}_P(X)
    \in(0,\infty)
\]
by the stated condition.

\paragraph{Under-service risk metric}
Let
\(
    \psi(P)=P(X\le s).
\)

Define $A_s=(-\infty,s]$ and $p_s=P(A_s)=P(X\le s)=F(s)$. Then
\[
    \psi(\widehat P_n)
    =
    \widehat P_n(A_s)
    =
    \frac1n\sum_{i=1}^n \mathbf 1\{X_i\le s\}.
\]
Hence
\[
\begin{aligned}
    \sqrt n\{\psi(\widehat P_n)-\psi(P)\}
    &=
    \sqrt n\{\widehat P_n(A_s)-P(A_s)\} \\
    &=
    \sqrt n
    \left\{
        \frac1n\sum_{i=1}^n \mathbf 1\{X_i\le s\}
        -
        p_s
    \right\} \\
    &=
    \frac1{\sqrt n}\sum_{i=1}^n
    \left[
        \mathbf 1\{X_i\le s\}-p_s
    \right].
\end{aligned}
\]
Thus the LAL holds, and
\[
    IF_{\psi_{s}}(x;P)
    =
    \mathbf 1\{x\le s\}-P(X\le s).
\]
Furthermore,
\[
    \mathbb E_P[IF_{\psi_s}(X;P)]
    =
    p_s-p_s
    =
    0,
\]
and
\[
    \operatorname{Var}_P\{IF_{\psi_s}(X;P)\}
    =
    \operatorname{Var}_P(\mathbf 1\{X\le s\})
    =
    p_s(1-p_s)
    \in(0,\infty)
\]
under \(0<p_s<1\).

\paragraph{High quantile metric}
For the high quantile metric, under the assumption that $F$ is absolutely continuous in a neighborhood of $q_\beta$ with a continuous and positive density $f(q_\beta)>0$, the Bahadur representation \citep{bahadur1966note} yields
\[
\sqrt{n}(\widehat q_\beta - q_\beta) = \frac{1}{\sqrt{n}} \sum_{i=1}^n \frac{\beta - \mathbf{1}\{X_i \le q_\beta\}}{f(q_\beta)} + o_p(1).
\]
Hence the influence function is 
    \[
    IF_{\psi_{Q_\beta}}(x;P)
    =
    \frac{\beta-\mathbf 1\{x\le q_\beta\}}{f(q_\beta)}.
    \] 
Furthermore,
\[
    \mathbb E_P[IF_{\psi_{Q_\beta}}(X;P)]
    =\frac{\beta-\beta}{f(q_\beta)}=0.
\] 
Because \(\mathbf{1}\{X\le q_{\beta}\}\) is a Bernoulli random variable
with success probability \(\beta\),
\[
\mathbb{E}
\left[
\left(
\beta-\mathbf{1}\{X\le q_{\beta}\}
\right)^2
\right]
=
\beta(1-\beta).
\]
\[
\operatorname{Var}
\left(
\operatorname{IF}_{\psi_{Q_\beta}}(X;P)
\right)
=
\frac{1}{f(q_{\beta})^2}
\mathbb{E}
\left[
\left(
\beta-\mathbf{1}\{X\le q_{\beta}\}
\right)^2
\right]=\frac{\beta(1-\beta)}{f(q_\beta)^2}
\]
is finite and strictly positive under the given conditions.

\paragraph{Upper tail metric}
Let \(q=q_\beta\) and define \(g_t(x)=(x-t)_+\).
Write the upper tail metric as
\(\psi(P) = q + (1-\beta)^{-1} P g_q\).
The plug-in estimator is
\(\psi(\widehat P_n) = \widehat q + (1-\beta)^{-1} \widehat P_n g_{\widehat q}\),
where \(\widehat q = Q_\beta(\widehat P_n)\).
By the Bahadur representation above, \(\widehat q - q = O_p(n^{-1/2})\).

Consider the decomposition
\begin{equation}
\begin{aligned}
\psi(\widehat P_n) - \psi(P)
&= (\widehat q - q) + \frac{1}{1-\beta}\bigl(\widehat P_n g_{\widehat q} - P g_q\bigr) \\
&= (\widehat q - q) + \frac{1}{1-\beta}(\widehat P_n - P)g_q  + \frac{1}{1-\beta}\bigl[P(g_{\widehat q} - g_q)\bigr]
   + \frac{1}{1-\beta}(\widehat P_n - P)(g_{\widehat q} - g_q).
\end{aligned}
\label{eq:cvar-decomp}
\end{equation}

Define \(G(t)=\mathbb{E}_P[(X-t)_+]\). Under the high quantile performance metric regularity
and \(\mathbb{E}[(X-q)_+^2]<\infty\), an application of the Dominated Convergence Theorem yields
\(G\) is differentiable at \(q\)
with derivative \(G'(q) = -P(X>q) = -(1-\beta)\).
Because \(\widehat q - q = O_p(n^{-1/2})\), a Taylor expansion gives
\begin{equation}
P(g_{\widehat q} - g_q) = G(\widehat q) - G(q)
= -(1-\beta)(\widehat q - q) + o_p(n^{-1/2}).
\label{eq:G-expand}
\end{equation}

Consider the class of functions
\(\mathcal{G} = \{g_t \mid t\in [q-\delta, q+\delta]\}\) for some small
\(\delta>0\). Under \(\mathbb{E}[(X-q)_+^2]<\infty\) the envelope
\(G_\delta(x) = (x-(q-\delta))_+\) is square-integrable.
The class \(\mathcal{G}\) is a VC-subgraph class and hence
\(P\)-Donsker \citep{vaart1996}.
By the random equicontinuity of the empirical process,
\begin{equation}
\sup_{|t-q|\le \delta_n}
\bigl|\sqrt{n}(\widehat P_n-P)(g_t - g_q)\bigr| = o_p(1)
\qquad\text{whenever }\delta_n\to 0.
\label{eq:equi}
\end{equation}
Since \(\widehat q \xrightarrow{p} q\), we may set \(t=\widehat q\) and
obtain
\[
(\widehat P_n-P)(g_{\widehat q} - g_q) = o_p(n^{-1/2}).
\]

Insert~\eqref{eq:G-expand} and the bound on the empirical term
into~\eqref{eq:cvar-decomp}.
\[
\begin{aligned}
\psi(\widehat P_n) - \psi(P)
&= (\widehat q - q) + \frac{1}{1-\beta}(\widehat P_n - P)g_q
   + \frac{1}{1-\beta}\bigl[-(1-\beta)(\widehat q - q) + o_p(n^{-1/2})\bigr] \\
&= \frac{1}{1-\beta}(\widehat P_n - P)g_q + o_p(n^{-1/2}).
\end{aligned}
\]
Multiplying by \(\sqrt{n}\) yields
\[
\sqrt{n}\bigl(\psi(\widehat P_n)-\psi(P)\bigr)
= \frac{\sqrt{n}}{1-\beta}(\widehat P_n-P)g_q + o_p(1)
= \frac{1}{\sqrt{n}}\sum_{i=1}^n
   \frac{g_q(X_i) - \mathbb{E}[g_q(X)]}{1-\beta} + o_p(1).
\]
Rewriting the summand using
\(\CVaR_\beta^+(P) = q + \mathbb{E}[g_q(X)]/(1-\beta)\),
we obtain the influence function
\[
IF_{\psi_{\mathrm{cvar}_\beta}}(x;P) = q + \frac{(x-q)_+}{1-\beta} - \CVaR_\beta^+(P).
\]
Clearly \(\mathbb{E}_P[IF_{\psi_{\mathrm{cvar}_\beta}}(X;P)] = 0\). The variance is
\[
\operatorname{Var}_P(IF_{\psi_{\mathrm{cvar}_\beta}})
= \frac{\operatorname{Var}_P\{(X-q)_+\}}{(1-\beta)^2}.
\]
By assumption \(\mathbb{E}[(X-q)_+^2]<\infty\), this variance is finite.
It is strictly positive because
\(P(X>q) = 1-\beta > 0\) implies that \((X-q)_+\) is not almost surely
constant.

Combining the four cases proves the LAL for all fairness
metrics considered in the paper.

Restoring the policy-group subscripts, for every \(P_{kg}\) in the
corresponding metric-based regularity class, we have
\[
    \sqrt{N_{kg}}
    \left\{
        \psi(\widehat P_{kg})-\psi(P_{kg})
    \right\}
    =
    \frac1{\sqrt{N_{kg}}}
    \sum_{i=1}^{N_{kg}}
    IF_\psi(X_{kg,i};P_{kg})
    +
    o_p(1),
\]
with $\mathbb E_{P_{kg}}[IF_\psi(X_{kg};P_{kg})]=0$ and $0<
    \operatorname{Var}_{P_{kg}}\{IF_\psi(X_{kg};P_{kg})\}
    <\infty.$
\endproof
This verifies Assumption~\ref{ass:lal}.

\section{Procedure for Solving $J_{k,k^\star_{\mathfrak{f}}(\kappa)}(0)$ in Theorem \ref{thm:mean-rate1}}
\label{calculation}

\subsection{General Procedure for Solving $J_{k,k^\star_{\mathfrak{f}}(\kappa)}(0)$}
In this section, we solve the optimization problem for $J_{k,k^\star_{\mathfrak{f}}(\kappa)}(0)$ in Theorem \ref{thm:mean-rate1}, building directly on the definition of the Cramér transform (rate function) and its core convex conjugacy properties established in the preceding sections. The objective function of this problem is a weighted sum of the Cramér transforms corresponding to the group-level distributions, subject to a linear equality constraint on the weighted group means. This forms a convex program where strong duality holds. Leveraging the Lagrange multiplier method, combined with the conjugate reciprocal relationship between the Cramér transform and the cumulant generating function (CGF), we reduce this high-dimensional optimization problem to an efficiently solvable univariate root-finding problem. This computational framework applies to distributions that satisfy the regularity condition of Cramér's Theorem (i.e., the CGF is finite on an open interval containing 0).

We introduce the Lagrange multiplier $\lambda$ for the linear equality constraint in Theorem \ref{thm:mean-rate1}, and construct the Lagrangian function.
\begin{equation}
\mathcal{L} = \sum_{g\in\mathcal{G}} \alpha_{kg} I_{R_{kg}}(m_{kg}) + \sum_{g\in\mathcal{G}} \alpha_{k^\star_{\mathfrak{f}}(\kappa)g} I_{R_{k^\star_{\mathfrak{f}}(\kappa)g}}(m_{k^\star_{\mathfrak{f}}(\kappa)g}) + \lambda \left( \sum_{g\in\mathcal{G}} w_g m_{k^\star_{\mathfrak{f}}(\kappa)g} - \sum_{g\in\mathcal{G}} w_g m_{kg} \right)
\end{equation}
where $I_{R_{kg}}(\cdot)$ denotes the Cramér transform (rate function) of the distribution $R_{kg}$ for policy $k$ and group $g$, as defined earlier. Under additional regularity ensuring that the Cramér rate functions $I_R(\cdot)$ are differentiable and strictly convex on the relevant interiors, equivalently with the corresponding CGFs $\Lambda_R(\cdot)$ sufficiently smooth, and that the optimizer is interior, the KKT conditions reduce the convex program to a one-dimensional
root-finding problem.
\begin{itemize}
    \item Taking the partial derivative with respect to the optimization variable $m_{kg}$ for policy $k$ and group $g$ and setting it to zero.
    \begin{equation}
    \frac{\partial \mathcal{L}}{\partial m_{kg}} = \alpha_{kg} \cdot I'_{R_{kg}}(m_{kg}) - \lambda w_g = 0 \implies I'_{R_{kg}}(m_{kg}) = \lambda \cdot \frac{w_g}{\alpha_{kg}}
    \end{equation}
    \item Taking the partial derivative with respect to the optimization variable $m_{k^\star_{\mathfrak{f}}(\kappa)g}$ for policy $k^\star_{\mathfrak{f}}(\kappa)$ and group $g$ and setting it to zero.
    \begin{equation}
    \frac{\partial \mathcal{L}}{\partial m_{k^\star_{\mathfrak{f}}(\kappa)g}} = \alpha_{k^\star_{\mathfrak{f}}(\kappa)g} \cdot I'_{R_{k^\star_{\mathfrak{f}}(\kappa)g}}(m_{k^\star_{\mathfrak{f}}(\kappa)g}) + \lambda w_g = 0 \implies I'_{R_{k^\star_{\mathfrak{f}}(\kappa)g}}(m_{k^\star_{\mathfrak{f}}(\kappa)g}) = - \lambda \cdot \frac{w_g}{\alpha_{k^\star_{\mathfrak{f}}(\kappa)g}}
    \end{equation}
\end{itemize}

From the definition of the Cramér transform, the rate function $I_R(m)$ is the Legendre--Fenchel conjugate of the corresponding distribution's CGF $\Lambda_R(\theta) = \log \mathbb{E}_R[e^{\theta Y}]$. The two satisfy the core reciprocal property of convex conjugates (proven in the preceding sections).
$$
I'_R(m) = \theta \iff m = \Lambda'_R(\theta),
$$
where $\Lambda'_R(\theta)$ is the first derivative of the CGF. Using this property, we can convert the derivative of the rate function in the first-order optimality conditions into the inverse mapping of the CGF, and directly express the optimal group means as explicit functions of the Lagrange multiplier $\lambda$.
\begin{equation}
m_{kg}^*(\lambda) = \Lambda'_{R_{kg}}\left( \lambda \cdot \frac{w_g}{\alpha_{kg}} \right), \quad m_{k^\star_{\mathfrak{f}}(\kappa)g}^*(\lambda) = \Lambda'_{R_{k^\star_{\mathfrak{f}}(\kappa)g}}\left( - \lambda \cdot \frac{w_g}{\alpha_{k^\star_{\mathfrak{f}}(\kappa)g}} \right)
\end{equation}

Substituting the explicit expressions for the optimal means above into the global weighted mean equality constraint in Theorem \ref{thm:mean-rate1} eliminates all high-dimensional optimization variables. This gives a univariate root-finding equation in terms of $\lambda$ only.
\begin{equation}
F(\lambda) = \sum_{g\in\mathcal{G}} w_g \left[ \Lambda'_{R_{kg}}\left( \lambda \cdot \frac{w_g}{\alpha_{kg}} \right) - \Lambda'_{R_{k^\star_{\mathfrak{f}}(\kappa)g}}\left( - \lambda \cdot \frac{w_g}{\alpha_{k^\star_{\mathfrak{f}}(\kappa)g}} \right) \right] = 0
\end{equation}

Combined with the premise of Theorem \ref{thm:mean-rate1} that $\mu_k \leq \mu_{k^\star_{\mathfrak{f}}(\kappa)}$, we can prove that this equation has a unique nonnegative solution.
\begin{itemize}
    \item At $\lambda=0$, $F(0) = \sum_{g\in\mathcal{G}} w_g (\mu_{kg} - \mu_{k^\star_{\mathfrak{f}}(\kappa)g}) = \mu_k - \mu_{k^\star_{\mathfrak{f}}(\kappa)} \leq 0$;
    \item $F(\lambda)$ is a strictly increasing function of $\lambda$, and $F(\lambda) \to +\infty$ as $\lambda \to +\infty$;
    \item By the Intermediate Value Theorem, there exists a unique $\lambda^* \geq 0$ satisfying $F(\lambda^*)=0$, which can be solved efficiently via Newton's method.
\end{itemize}

Substitute the solved optimal multiplier $\lambda^*$ into the optimal mean expressions, then plug these optimal group means into the objective function to obtain the exact solution for $J_{k,k^\star_{\mathfrak{f}}(\kappa)}(0)$.

\subsection{Closed-Form Results for Normal Distributions}
The normal distribution is the standard modeling assumption for continuous outcomes (e.g., service times or performance scores), for which we have closed-form results for the CGF and Cramér transform. Assume that for all $g\in\mathcal{G}$, the within-group outcomes follow independent normal distributions, with $Y_{kg} \sim \mathcal{N}(\mu_{kg}, \sigma_{kg}^2)$ and $Y_{k^*g} \sim \mathcal{N}(\mu_{k^*g}, \sigma_{k^*g}^2)$. The CGF of a normal distribution is $\Lambda_R(\theta) = \mu\theta + \frac{\sigma^2 \theta^2}{2}$, with first derivative $\Lambda'_R(\theta) = \mu + \sigma^2 \theta$, consistent with our earlier derivations.

1.  Substitute the first derivative of the normal CGF into the optimal mean expressions.
    \begin{equation}
    m_{kg}^*(\lambda) = \mu_{kg} + \sigma_{kg}^2 \cdot \frac{\lambda w_g}{\alpha_{kg}}, \quad m_{k^*g}^*(\lambda) = \mu_{k^*g} - \sigma_{k^*g}^2 \cdot \frac{\lambda w_g}{\alpha_{k^*g}}
    \end{equation}
2.  Substitute into the constraint equation $F(\lambda)=0$ to solve for the optimal multiplier $\lambda^*$.
    \begin{equation}
    (\mu_k - \mu_{k^*}) + \lambda \cdot \underbrace{\sum_{g\in\mathcal{G}} w_g^2 \left( \frac{\sigma_{kg}^2}{\alpha_{kg}} + \frac{\sigma_{k^*g}^2}{\alpha_{k^*g}} \right)}_{S>0} = 0 \implies \lambda^* = \frac{\mu_{k^*} - \mu_k}{S}
    \end{equation}
3.  Substitute $\lambda^*$ and the optimal means into the objective function, and simplify using the closed-form Cramér transform for the normal distribution, to obtain the final analytical result.
    \begin{equation}
    {J_{k,k^*}(0) = \frac{(\mu_{k^*} - \mu_k)^2}{2 \cdot \sum_{g\in\mathcal{G}} w_g^2 \left( \frac{\sigma_{kg}^2}{\alpha_{kg}} + \frac{\sigma_{k^*g}^2}{\alpha_{k^*g}} \right)}}
    \end{equation}

It has the following properties.
\begin{itemize}
    \item When $\mu_k = \mu_{k^*}$, $J_{k,k^*}(0)=0$, consistent with large deviation theory because equal true means imply no exponential decay of the error probability;
    \item When $\mu_k < \mu_{k^*}$, $J_{k,k^*}(0)>0$, meaning the error probability decays exponentially with the total sample size $N$ at rate $J_{k,k^*}(0)$;
    \item When $G=1$, the result reduces to the classic Cramér large deviation rate function for two policies normal mean comparison, given by $J=\frac{(\mu_{k^*}-\mu_k)^2}{2(\frac{\sigma_{k}^2}{\alpha_{k}}+\frac{\sigma_{k^*}^2}{\alpha_{k^*}})}$.
\end{itemize}

\section{Proofs of Theoretical Results}
\label{app:proofs}

This section provides proofs for the theoretical results in the main text. 

\subsection{Proof of Proposition \ref{prop:local}}

\proof{}
Fix a policy \(k\) and a group pair \((g,g')\). Recall that
\(
d_{k,gg'}^\psi
=
\psi(P_{kg})-\psi(P_{kg'}).
\)
We first consider the case in which the two allocation shares are positive.
\(
\alpha_{kg}>0,
\) and \(
\alpha_{kg'}>0.
\)
By algebraic decomposition,
\[
\begin{split}
\sqrt{T}
\left[
\left(
\psi(\widehat P_{kg})-\psi(\widehat P_{kg'})
\right)
-
d_{k,gg'}^\psi
\right] =
\sqrt{T}
\left(
\psi(\widehat P_{kg})-\psi(P_{kg})
\right)
-
\sqrt{T}
\left(
\psi(\widehat P_{kg'})-\psi(P_{kg'})
\right).
\end{split}
\]
Since \(\alpha_{kg}=N_{kg}/T\), we have
\[
\sqrt{T}
\left(
\psi(\widehat P_{kg})-\psi(P_{kg})
\right)
=
\frac{1}{\sqrt{\alpha_{kg}}}
\sqrt{N_{kg}}
\left(
\psi(\widehat P_{kg})-\psi(P_{kg})
\right).
\]
By Assumption \ref{ass:lal},
\[
\sqrt{N_{kg}}
\left(
\psi(\widehat P_{kg})-\psi(P_{kg})
\right)
=
\frac{1}{\sqrt{N_{kg}}}
\sum_{i=1}^{N_{kg}}
\operatorname{IF}_\psi(X_{kg,i};P_{kg})
+
o_p(1),
\]
where $\mathbb{E}
\left[
\operatorname{IF}_\psi(X_{kg};P_{kg})
\right]
=
0$ and $\operatorname{Var}
\left[
\operatorname{IF}_\psi(X_{kg};P_{kg})
\right]
=
\sigma_{\psi,kg}^2
\in(0,\infty)$.
The central limit theorem gives
\[
\frac{1}{\sqrt{N_{kg}}}
\sum_{i=1}^{N_{kg}}
\operatorname{IF}_\psi(X_{kg,i};P_{kg})
\Rightarrow
\mathcal N(0,\sigma_{\psi,kg}^2).
\]
Therefore,
\[
\sqrt{T}
\left(
\psi(\widehat P_{kg})-\psi(P_{kg})
\right)
\Rightarrow
\mathcal N
\left(
0,
\frac{\sigma_{\psi,kg}^2}{\frac{N_{kg}}{T}}
\right).
\]
The same argument gives
\[
\sqrt{T}
\left(
\psi(\widehat P_{kg'})-\psi(P_{kg'})
\right)
\Rightarrow
\mathcal N
\left(
0,
\frac{\sigma_{\psi,kg'}^2}{\frac{N_{kg'}}{T}}
\right).
\]
Because the samples for the two policy-group pairs are independent, the two
limits are independent. Hence
\[
\sqrt{T}
\left[
\left(
\psi(\widehat P_{kg})-\psi(\widehat P_{kg'})
\right)
-
d_{k,gg'}^\psi
\right]
\Rightarrow
\mathcal N
\left(
0,
V_{k,gg'}^\psi(\bm{\alpha})
\right),
\]
where
\[
V_{k,gg'}^\psi(\bm{\alpha})
=
\frac{\sigma_{\psi,kg}^2}{\frac{N_{kg}}{T}}
+
\frac{\sigma_{\psi,kg'}^2}{\frac{N_{kg'}}{T}}
=
T S_{T,k,gg'}^\psi,
\quad
S_{T,k,gg'}^\psi
=
\frac{\sigma_{\psi,kg}^2}{N_{kg}}
+
\frac{\sigma_{\psi,kg'}^2}{N_{kg'}}
.
\]

Given
\(
\alpha_{kg}>0
\) and \(
\alpha_{kg'}>0,
\)
\[
V_{k,gg'}^\psi(\bm{\alpha})
=
\frac{\sigma_{\psi,kg}^2}{\alpha_{kg}}
+
\frac{\sigma_{\psi,kg'}^2}{\alpha_{kg'}}.
\]
It remains to justify the Gaussian approximation.
Let
\[
d=d_{k,gg'}^\psi,
\qquad
\Delta=\Delta_{k,gg'}^\psi(\kappa)
=
\kappa-|d|,
\qquad
S_T=S_{T,k,gg'}^\psi.
\]
The normal approximation above motivates the Gaussian surrogate
\[
\psi(\widehat P_{kg})-\psi(\widehat P_{kg'})
\approx
d+Z_T,
\qquad
Z_T\sim \mathcal N(0,S_T).
\]
Since \(\Delta\geq0\), we have \(|d|\leq\kappa\). Under this Gaussian approximation,
the false probability is
\(
\mathbb P
\left(
|d+Z_T|>\kappa
\right).
\)
The closest distance from \(d\) to the complement of the feasible interval
\([-\kappa,\kappa]\) is
\(
\kappa-|d|=\Delta.
\)
Therefore, by the standard Gaussian tail approximation,
\[
\mathbb P
\left(
|d+Z_T|>\kappa
\right)
\approx
\exp
\left\{
-
\frac{\Delta^2}{2S_T}
\right\}.
\]
Substituting back the definitions of \(S_T\), \(V_{k,gg'}^\psi(\bm{\alpha})\),
and \(\Delta\), we obtain
\[
\mathbb{P}\Big(
|\psi(\widehat{P}_{kg})-\psi(\widehat{P}_{kg'})|>\kappa
\Big)
\approx
\exp\left\{
-
\frac{
\left(\Delta_{k,gg'}^\psi(\kappa)\right)^2
}{
2S_{T,k,gg'}^\psi
}
\right\}
=
\exp\left\{
-
T
\frac{
\left(\Delta_{k,gg'}^\psi(\kappa)\right)^2
}{
2V_{k,gg'}^\psi(\bm{\alpha})
}
\right\}.
\]
This gives the approximation in \eqref{eq:local-rate}.

The local verification hardness index then follows directly from the exponent.
\[
H_{k,gg'}^\psi(\kappa;\bm{\alpha})
=
\frac{
V_{k,gg'}^\psi(\bm{\alpha})
}{
\left(\Delta_{k,gg'}^\psi(\kappa)\right)^2
}
=
\frac{
T S_{T,k,gg'}^\psi
}{
\left(\Delta_{k,gg'}^\psi(\kappa)\right)^2
}.
\]
A larger value of \(H_{k,gg'}^\psi(\kappa;\bm{\alpha})\) corresponds to a
smaller Gaussian exponent and hence a slower decay rate under the Gaussian approximation.

Now consider the boundary case. If
\(
\alpha_{kg}=0
\) or \(
\alpha_{kg'}=0,
\)
then the \(T\)-scaled variance term is not finite.
\[
V_{k,gg'}^\psi(\bm{\alpha})
=T S_{T,k,gg'}^\psi=
\frac{\sigma_{\psi,kg}^2}{\frac{N_{kg}}{T}}
+
\frac{\sigma_{\psi,kg'}^2}{\frac{N_{kg'}}{T}}
=
+\infty
\]
under the boundary convention. Consequently, whenever
\(\Delta_{k,gg'}^\psi(\kappa)\ne0\),
\[
H_{k,gg'}^\psi(\kappa;\bm{\alpha})
=
\frac{
V_{k,gg'}^\psi(\bm{\alpha})
}{
\left(\Delta_{k,gg'}^\psi(\kappa)\right)^2
}
=
+\infty.
\]
Thus any comparison involving an $\alpha_{kg}$-zero-allocation pair has infinite \(T\)-scale
local hardness. If the corresponding sample size $N_{kg}$ diverges, the Gaussian
approximation based on \(S_{T,k,gg'}^\psi\) remains meaningful, but its rate is
governed by the actual growth of \(N_{kg}\) and \(N_{kg'}\).

Finally, if \(\Delta_{k,gg'}^\psi(\kappa)<0\), then the pair is truly unfair.
The error event is incorrectly classifying the pair as fair. The closest
distance from \(d_{k,gg'}^\psi\) to the feasible interval
\([-\kappa,\kappa]\) is
\(
|\Delta_{k,gg'}^\psi(\kappa)|.
\)
Repeating the same Gaussian tail calculation gives the same approximation in the exponent. We obtain
\[
\mathbb{P}\Big(
|\psi(\widehat{P}_{kg})-\psi(\widehat{P}_{kg'})|\leq\kappa
\Big)
\approx
\exp\left\{
-
\frac{
\left(\Delta_{k,gg'}^\psi(\kappa)\right)^2
}{
2S_{T,k,gg'}^\psi
}
\right\}
=
\exp\left\{
-
T
\frac{
\left(\Delta_{k,gg'}^\psi(\kappa)\right)^2
}{
2V_{k,gg'}^\psi(\bm{\alpha})
}
\right\}.
\]

This completes the proof of Proposition \ref{prop:local}.
\Halmos
\endproof

\subsection{Proof of Theorem \ref{thm:same-frontier}}

\proof{}
Let \(P_{Z_k}\) denote the distribution of the common noise term \(Z_k\). Under
the location-shift model,
\(
X_{kg}=\theta_{kg}+Z_k,
\)
the distribution of \(X_{kg}\) is
\(
P_{kg}=P_{Z_k}\oplus \theta_{kg},
\)
where \(P\oplus c\) denotes the distribution of \(X+c\) when \(X\sim P\). For
any other group \(g'\),
\(
P_{kg'}=P_{Z_k}\oplus \theta_{kg'}.
\)

Since \(\psi_j\), \(j=1,2\), is translation-equivariant,
\(
\psi_j(P\oplus c)=\psi_j(P)+c
\)
for any distribution \(P\) and constant \(c\). Therefore,
\[
\psi_j(P_{kg})
=
\psi_j(P_{Z_k}\oplus\theta_{kg})
=
\psi_j(P_{Z_k})+\theta_{kg},
\]
and similarly,
\[
\psi_j(P_{kg'})
=
\psi_j(P_{Z_k}\oplus\theta_{kg'})
=
\psi_j(P_{Z_k})+\theta_{kg'}.
\]
Subtracting the two expressions cancels the common term
\(\psi_j(P_{Z_k})\),
\begin{equation}
\label{eq:proof-gap-location}
\psi_j(P_{kg})-\psi_j(P_{kg'})
=
\theta_{kg}-\theta_{kg'},
\qquad j=1,2.
\end{equation}
This proves \eqref{eq:gap-location}.

Equation \eqref{eq:proof-gap-location} implies that the fairness score of
policy \(k\) is identical under the two fairness metrics.
\[
\varepsilon_k^{(\psi_j,\kappa)}
=
\max_{g<g'}
\left|
\psi_j(P_{kg})-\psi_j(P_{kg'})
\right|
=
\max_{g<g'}
\left|
\theta_{kg}-\theta_{kg'}
\right|,
\qquad j=1,2.
\]
Hence the two metrics with the same fairness tolerance induce the same fair policy set $\mathcal{F}_{(\psi_1,\kappa)}
=
\mathcal{F}_{(\psi_2,\kappa)}$. Since the fair policy sets are identical, the best fair objective values are
also identical.
\[
V_{(\psi_1,\kappa)}
=
\max_{k\in \mathcal{F}_{(\psi_1,\kappa)}}\mu_k
=
\max_{k\in \mathcal{F}_{(\psi_2,\kappa)}}\mu_k
=
V_{(\psi_2,\kappa)}.
\]
The unconstrained value
\(
V_0=\max_{k\in\mathcal K}\mu_k
\)
does not depend on the fairness specification. Therefore,
\[
\operatorname{PoF}_{(\psi_1,\kappa)}
=
V_0-V_{(\psi_1,\kappa)}
=
V_0-V_{(\psi_2,\kappa)}
=
\operatorname{PoF}_{(\psi_2,\kappa)}.
\]
This proves \eqref{eq:same-feasible}.

It remains to establish the ratio of local verification hardness. Under
Assumption \ref{ass:lal}, the local hardness index for metric \(\psi_j\) with fairness tolerance \(\kappa\) is
\[
H_{k,gg'}^{\psi_j}(\kappa;\bm{\alpha})
=
\frac{
V_{k,gg'}^{\psi_j}(\bm{\alpha})
}{
\left(
\Delta_{k,gg'}^{\psi_j}(\kappa)
\right)^2
},
\]
where
\[
V_{k,gg'}^{\psi_j}(\bm{\alpha})
=
\frac{\sigma_{\psi_j,kg}^2}{\frac{N_{kg}}{T}}
+
\frac{\sigma_{\psi_j,kg'}^2}{\frac{N_{kg'}}{T}}.
\]
When \(\alpha_{kg}>0\), \(\alpha_{kg'}>0\),
\[
V_{k,gg'}^{\psi_j}(\bm{\alpha})
=
\frac{\sigma_{\psi_j,kg}^2}{\alpha_{kg}}
+
\frac{\sigma_{\psi_j,kg'}^2}{\alpha_{kg'}},
\]
otherwise, \(V_{k,gg'}^{\psi_j}(\bm{\alpha})
=+\infty\).

For any allocation under which these local hardness indices are well defined,
\eqref{eq:proof-gap-location} implies that the two metrics with the same fairness tolerance have the same fairness slack for the same policy and group pair.
\[
\Delta_{k,gg'}^{\psi_j}(\kappa)
=
\kappa-
\left|
\theta_{kg}-\theta_{kg'}
\right|,
\qquad j=1,2.
\]

We next show that, for translation-equivariant metrics, the
influence function variance is invariant to group-level location shifts. For a
translation-equivariant metric \(\psi\), the influence function satisfies
\[
\operatorname{IF}_{\psi}(x+c;P\oplus c)
=
\operatorname{IF}_{\psi}(x;P).
\]
Indeed, by the definition of the influence function,
\[
\operatorname{IF}_{\psi}(x+c;P\oplus c)
=
\lim_{\varepsilon\downarrow0}
\frac{
\psi\!\left((1-\varepsilon)(P\oplus c)+\varepsilon\delta_{x+c}\right)
-
\psi(P\oplus c)
}{\varepsilon}.
\]
Since
\[
(1-\varepsilon)(P\oplus c)+\varepsilon\delta_{x+c}
=
\left((1-\varepsilon)P+\varepsilon\delta_x\right)\oplus c,
\]
translation equivariance gives
\[
\psi\!\left((1-\varepsilon)(P\oplus c)+\varepsilon\delta_{x+c}\right)
=
\psi\!\left((1-\varepsilon)P+\varepsilon\delta_x\right)+c,
\]
\[
\psi(P\oplus c)=\psi(P)+c.
\]
Subtracting the two expressions cancels the constant \(c\), so
\[
\operatorname{IF}_{\psi}(x+c;P\oplus c)
=
\operatorname{IF}_{\psi}(x;P).
\]

Applying this identity with \(P=P_{Z_k}\), \(c=\theta_{kg}\), and \(x=Z_k\),
we obtain
\[
\operatorname{IF}_{\psi_j}(X_{kg};P_{kg})
=
\operatorname{IF}_{\psi_j}(Z_k+\theta_{kg};P_{Z_k}\oplus\theta_{kg})
=
\operatorname{IF}_{\psi_j}(Z_k;P_{Z_k}).
\]
Therefore,
\[
\sigma_{\psi_j,kg}^2
=
\operatorname{Var}
\left(
\operatorname{IF}_{\psi_j}(X_{kg};P_{kg})
\right)
=
\operatorname{Var}
\left(
\operatorname{IF}_{\psi_j}(Z_k;P_{Z_k})
\right)
=
\tau_{\psi_j,k}^2,
\]
which is independent of \(g\). Thus,
\(
\sigma_{\psi_j,kg}^2
=
\sigma_{\psi_j,kg'}^2
=
\tau_{\psi_j,k}^2.
\)

Substituting this expression and the common slack into the local hardness index
gives, for \(j=1,2\),
\[
H_{k,gg'}^{\psi_j}(\kappa;\bm{\alpha})
=
\frac{
\tau_{\psi_j,k}^2
}{
\left(
\kappa-
|\theta_{kg}-\theta_{kg'}|
\right)^2
}
\left(
\frac{T}{N_{kg}}
+
\frac{T}{N_{kg'}}
\right).
\]
When \(\alpha_{kg}>0\), \(\alpha_{kg'}>0\), we have
\[
H_{k,gg'}^{\psi_j}(\kappa;\bm{\alpha})
=
\frac{
\tau_{\psi_j,k}^2
}{
\left(
\kappa-
|\theta_{kg}-\theta_{kg'}|
\right)^2
}
\left(
\frac{1}{\alpha_{kg}}
+
\frac{1}{\alpha_{kg'}}
\right).
\]
Taking the ratio of the two hardness indices, the common slack term and the
common allocation term cancel.
\[
\frac{
H_{k,gg'}^{\psi_1}(\kappa;\bm{\alpha})
}{
H_{k,gg'}^{\psi_2}(\kappa;\bm{\alpha})
}
=
\frac{
\tau_{\psi_1,k}^2
}{
\tau_{\psi_2,k}^2
}.
\]
This proves \eqref{eq:ratio-hardness} and completes the proof.
\Halmos
\endproof

\subsection{Proof of Corollary \ref{cor:mean-quantile}}

\proof{}
We first verify that both the mean functional and the quantile functional are
translation-equivariant. For the mean functional
\(\psi_{\mathrm{mean}}(P)=\mathbb{E}_P[X]\), if
\(P'=P\oplus c\), then \(X'=X+c\sim P'\) and
\[
\psi_{\mathrm{mean}}(P')
=
\mathbb{E}[X+c]
=
\mathbb{E}[X]+c
=
\psi_{\mathrm{mean}}(P)+c.
\]
Thus \(\psi_{\mathrm{mean}}\) is translation-equivariant. For the quantile
functional \(\psi_{Q_\beta}(P)=Q_\beta(P)\), if \(P'=P\oplus c\), then, under
the assumed continuity of the distribution at the target quantile,
\(
Q_\beta(P')
=
Q_\beta(P)+c.
\)
Hence \(\psi_{Q_\beta}\) is also translation-equivariant. Therefore, Theorem
\ref{thm:same-frontier} applies to both metrics under the location-shift model.

It remains to compute the influence function variances of the two functionals
under the distribution \(P_{Z_k}\) of \(Z_k\). For the mean functional, the
influence function at any distribution \(P\) with mean
\(\mu=\mathbb{E}_P[X]\) is
\(
\operatorname{IF}_{\psi_{\mathrm{mean}}}(x;P)
=
x-\mu.
\)
Applying this to \(P=P_{Z_k}\), we obtain
\(
\operatorname{IF}_{\psi_{\mathrm{mean}}}(z;P_{Z_k})
=
z-\mathbb{E}[Z_k].
\)
Thus,
\begin{equation}
\label{eq:mean-var}
\tau^2_{\mathrm{mean},k}
=
\operatorname{Var}
\left(
\operatorname{IF}_{\psi_{\mathrm{mean}}}(z;P_{Z_k})
\right)
=
\operatorname{Var}(Z_k).
\end{equation}

Next consider the quantile functional. Let
\(q_{k,\beta}=Q_\beta(P_{Z_k})\), and assume that \(P_{Z_k}\) has a continuous
density \(f_k\) that is positive at \(q_{k,\beta}\). For a general distribution
\(P\) with positive density \(f\) at \(q=Q_\beta(P)\), the influence function
of the \(\beta\)-quantile is
\[
\operatorname{IF}_{\psi_{Q_\beta}}(x;P)
=
\frac{\beta-\mathbf{1}\{x\le q\}}{f(q)}.
\]

Applying this formula to \(P=P_{Z_k}\), we have
\[
\operatorname{IF}_{\psi_{Q_\beta}}(z;P_{Z_k})
=
\frac{
\beta-\mathbf{1}\{z\le q_{k,\beta}\}
}{
f_k(q_{k,\beta})
}.
\]
Since
\(
\mathbb{P}(Z_k\le q_{k,\beta})=\beta,
\)
the influence function has mean zero. Therefore,
\begin{equation}
\label{eq:quant-var}
\tau^2_{Q_\beta,k}
=
\operatorname{Var}
\left(
\operatorname{IF}_{\psi_{Q_\beta}}(z;P_{Z_k})
\right)
=
\frac{
\beta(1-\beta)
}{
f_k(q_{k,\beta})^2
}.
\end{equation}

By Theorem \ref{thm:same-frontier}, for two translation-equivariant metrics the
ratio of local hardness indices equals the ratio of their influence function
variances.
\[
\frac{
H^{\psi_1}_{k,gg'}(\kappa;\bm{\alpha})
}{
H^{\psi_2}_{k,gg'}(\kappa;\bm{\alpha})
}
=
\frac{
\tau^2_{\psi_1,k}
}{
\tau^2_{\psi_2,k}
}.
\]
Setting \(\psi_1=\psi_{Q_\beta}\) and
\(\psi_2=\psi_{\mathrm{mean}}\), and substituting
\eqref{eq:mean-var} and \eqref{eq:quant-var}, we obtain
\[
\frac{
H^{Q_\beta}_{k,gg'}(\kappa;\bm{\alpha})
}{
H^{\mathrm{mean}}_{k,gg'}(\kappa;\bm{\alpha})
}
=
\frac{
\tau^2_{Q_\beta,k}
}{
\tau^2_{\mathrm{mean},k}
}
=
\frac{
\beta(1-\beta)
}{
\operatorname{Var}(Z_k)
\left[f_k(q_{k,\beta})\right]^2
}.
\]
This proves \eqref{eq:ratio-mq}.
\Halmos
\endproof

\subsection{Proof of Theorem \ref{thm:service-threshold}}

\proof{}
Fix a policy \(k\) and a group pair \((g,g')\). Under the location-shift model,
\(
X_{kg}=\theta_{kg}+Z_k,
\)
where \(Z_k\) has distribution function \(F_k\) and density \(f_k\), common
across groups for the fixed policy \(k\). For a threshold \(s\), the
threshold-based functional is
\(
\psi_s(P)=\mathbb{P}(X\le s).
\)
Thus,
\[
\psi_s(P_{kg})
=
\mathbb{P}(X_{kg}\le s)
=
\mathbb{P}(Z_k+\theta_{kg}\le s)
=
F_k(s-\theta_{kg}),
\]
and similarly
\(
\psi_s(P_{kg'})
=
F_k(s-\theta_{kg'}).
\)

Let $d_{k,gg'}=\theta_{kg}-\theta_{kg'}$ and $\bar\theta_{k,gg'}=\frac{\theta_{kg}+\theta_{kg'}}{2}$. Then $\theta_{kg}
=
\bar\theta_{k,gg'}+\frac{d_{k,gg'}}{2}$ and $\theta_{kg'}
=
\bar\theta_{k,gg'}-\frac{d_{k,gg'}}{2}$.
Writing
\(
x_s=s-\bar\theta_{k,gg'},
\)
we have
\[
\psi_s(P_{kg})
=
F_k\left(x_s-\frac{d_{k,gg'}}{2}\right),
\qquad
\psi_s(P_{kg'})
=
F_k\left(x_s+\frac{d_{k,gg'}}{2}\right).
\]
If \(f_k\) is continuous in a neighborhood of \(x_s\), then as
\(d_{k,gg'}\to0\),
\[
F_k\left(x_s-\frac{d_{k,gg'}}{2}\right)
=
F_k(x_s)
-
\frac{d_{k,gg'}}{2}f_k(x_s)
+
o(|d_{k,gg'}|),
\]
and
\[
F_k\left(x_s+\frac{d_{k,gg'}}{2}\right)
=
F_k(x_s)
+
\frac{d_{k,gg'}}{2}f_k(x_s)
+
o(|d_{k,gg'}|).
\]
Subtracting the two expansions gives
\[
\psi_s(P_{kg})-\psi_s(P_{kg'})
=
-d_{k,gg'} f_k(x_s)
+
o(|d_{k,gg'}|).
\]
Taking absolute values and using \(x_s=s-\bar\theta_{k,gg'}\), we obtain
\begin{equation}
\label{eq:service-first-order-proof}
|\psi_s(P_{kg})-\psi_s(P_{kg'})|
=
f_k(s-\bar\theta_{k,gg'})|d_{k,gg'}|
+
o(|d_{k,gg'}|),
\end{equation}
which proves \eqref{eq:service-first-order}.

We next consider the calibrated tolerance
\(
\kappa(s)
=
f_k(s-\bar\theta_{k,gg'})\bar\kappa.
\)
Combining this calibration with \eqref{eq:service-first-order-proof}, the
threshold-based under-service risk fairness restriction
\(
|\psi_s(P_{kg})-\psi_s(P_{kg'})|\le \kappa(s)
\)
becomes
\[
f_k(s-\bar\theta_{k,gg'})|d_{k,gg'}|
+
o(|d_{k,gg'}|)
\le
f_k(s-\bar\theta_{k,gg'})\bar\kappa.
\]
If \(f_k(s-\bar\theta_{k,gg'})>0\), this is first-order equivalent to
\(
|d_{k,gg'}|\le \bar\kappa.
\)
Thus, after calibration, threshold \(s\) imposes, to first order, the same
allowable location disparity \(\bar\kappa\).

It remains to derive the local hardness expression. The influence function of
\(\psi_s(P)=\mathbb{P}(X\le s)\) is
\[
\operatorname{IF}_{\psi_s}(x;P)
=
\mathbf{1}\{x\le s\}-\psi_s(P).
\]
Therefore,
\[
\sigma_{\psi_s,kg}^2
=
\operatorname{Var}
\left(
\operatorname{IF}_{\psi_s}(X_{kg};P_{kg})
\right)
=
\operatorname{Var}
\left(
\mathbf{1}\{X_{kg}\le s\}
\right).
\]
Since
\(
\mathbb{P}(X_{kg}\le s)
=
F_k(s-\theta_{kg}),
\)
we have
\[
\sigma_{\psi_s,kg}^2
=
F_k(s-\theta_{kg})
\left[
1-F_k(s-\theta_{kg})
\right].
\]
Similarly,
\[
\sigma_{\psi_s,kg'}^2
=
F_k(s-\theta_{kg'})
\left[
1-F_k(s-\theta_{kg'})
\right].
\]
In the small-gap regime \(d_{k,gg'}\to0\),
\[
F_k(s-\theta_{kg})
=
F_k(s-\bar\theta_{k,gg'})
+
o(1),
\]
and the same holds for group \(g'\). Hence, defining
\(
p_k(s)=F_k(s-\bar\theta_{k,gg'}),
\)
we obtain
\[
\sigma_{\psi_s,kg}^2
=
p_k(s)(1-p_k(s))+o(1),
\qquad
\sigma_{\psi_s,kg'}^2
=
p_k(s)(1-p_k(s))+o(1).
\]

By Proposition \ref{prop:local}, the local hardness index for the functional
\(\psi_s\) is
\[
H_{k,gg'}^{\SL(s)}(\kappa(s);\bm{\alpha})
=
\frac{
\frac{\sigma_{\psi_s,kg}^2}{{\frac{N_{kg}}{T}}}
+
\frac{\sigma_{\psi_s,kg'}^2}{{\frac{N_{kg'}}{T}}}
}{
\left(
\kappa(s)
-
|\psi_s(P_{kg})-\psi_s(P_{kg'})|
\right)^2
}.
\]
When \(\alpha_{kg}>0\), \(\alpha_{kg'}>0\), we have
\[
H_{k,gg'}^{\SL(s)}(\kappa(s);\bm{\alpha})
=
\frac{
\sigma_{\psi_s,kg}^2/{\alpha_{kg}}
+
\sigma_{\psi_s,kg'}^2/\alpha_{kg'}
}{
\left(
\kappa(s)
-
|\psi_s(P_{kg})-\psi_s(P_{kg'})|
\right)^2
}.
\]
Using \eqref{eq:service-first-order-proof} and the calibration
\(\kappa(s)=f_k(s-\bar\theta_{k,gg'})\bar\kappa\), the denominator satisfies
\[
\begin{split}
\kappa(s)
-
|\psi_s(P_{kg})-\psi_s(P_{kg'})|
&=
f_k(s-\bar\theta_{k,gg'})\bar\kappa
-
f_k(s-\bar\theta_{k,gg'})|d_{k,gg'}|
+
o(|d_{k,gg'}|)
\\
&=
f_k(s-\bar\theta_{k,gg'})
\big(\bar\kappa-|d_{k,gg'}|\big)
+
o(|d_{k,gg'}|).
\end{split}
\]
In the local small-gap regime \(d_{k,gg'}\to0\) with fixed
\(\bar\kappa>0\), this becomes
\[
\kappa(s)
-
|\psi_s(P_{kg})-\psi_s(P_{kg'})|
=
f_k(s-\bar\theta_{k,gg'})\bar\kappa
+
o(1).
\]
Therefore,
\[
\left(
\kappa(s)
-
|\psi_s(P_{kg})-\psi_s(P_{kg'})|
\right)^2
=
f_k(s-\bar\theta_{k,gg'})^2\bar\kappa^2
+
o(1).
\]
Substituting the numerator and denominator approximations into the local
hardness index gives
\begin{equation}
\label{eq:service-hardness-proof}
H_{k,gg'}^{\SL(s)}(\kappa(s);\bm{\alpha})
=
\frac{
p_k(s)(1-p_k(s))
}{
f_k(s-\bar\theta_{k,gg'})^2\bar\kappa^2
}
\left(
\frac{T}{N_{kg}}
+
\frac{T}{N_{kg'}}
\right)
+
o(1),
\end{equation}
which proves \eqref{eq:service-hardness}.
When \(\alpha_{kg}>0\), \(\alpha_{kg'}>0\), we have
\begin{equation}
H_{k,gg'}^{\SL(s)}(\kappa(s);\bm{\alpha})
=
\frac{
p_k(s)(1-p_k(s))
}{
f_k(s-\bar\theta_{k,gg'})^2\bar\kappa^2
}
\left(
\frac{1}{\alpha_{kg}}
+
\frac{1}{\alpha_{kg'}}
\right)
+
o(1),
\end{equation}

Finally, for two thresholds \(s_1\) and \(s_2\) calibrated to the same
\(\bar\kappa\), applying \eqref{eq:service-hardness-proof} to each threshold
yields
\[
\frac{
H_{k,gg'}^{\SL(s_1)}(\kappa(s_1);\bm{\alpha})
}{
H_{k,gg'}^{\SL(s_2)}(\kappa(s_2);\bm{\alpha})
}
=
\frac{
p_k(s_1)(1-p_k(s_1))/f_k(s_1-\bar\theta_{k,gg'})^2
}{
p_k(s_2)(1-p_k(s_2))/f_k(s_2-\bar\theta_{k,gg'})^2
}
+
o(1),
\]
which proves \eqref{eq:service-ratio}. This completes the proof.
\Halmos
\endproof

\subsection{Proof of Theorem \ref{thm:wedge}}
\label{proof:wedge}
\proof{}
We first show how the local quadratic decay rate arises near a critical
fairness tolerance. We then use this exponent to derive the implementation cost
and the implementation gap.

The key local object is a critical tolerance \(\kappa_0\) at which the best fair policy changes. There are two one-sided
configurations that can generate such a critical tolerance. In the first, \(\kappa\) decreases toward
\(\kappa_0\), and the current best fair policy is about to become unfair. In the second, \(\kappa\) increases toward \(\kappa_0\), and a high-performing but unfair
policy is about to become fair.
In both cases, the relevant local distance to the boundary is
\(
\rho(\kappa)=|\kappa-\kappa_0|.
\)

\paragraph{Case 1. the current best fair policy becomes nearly unfair as \(\kappa\downarrow\kappa_0\).}

Now suppose the current best fair policy
\(k^\star_{\mathfrak f}(\kappa)\) has a unique active group pair
\((g_0,g_0')\) whose fairness comparison becomes binding at \(\kappa_0\). That
is,
\(
\left|
d_{k^\star_{\mathfrak f},g_0g_0'}^\psi
\right|
=
\kappa_0.
\)
For \(\kappa>\kappa_0\) close to \(\kappa_0\), this policy is still truly fair,
but its slack for the active fairness comparison is
\[
\kappa
-
\left|
d_{k^\star_{\mathfrak f},g_0g_0'}^\psi
\right|
=
\kappa-\kappa_0
=
\rho(\kappa).
\]

The dominant event of false selection is that the true best fair policy is
empirically classified as unfair.
\[
\left|
\psi(\widehat P_{k^\star_{\mathfrak f}g_0})
-
\psi(\widehat P_{k^\star_{\mathfrak f}g_0'})
\right|
>
\kappa.
\]
Again, we can assume that all other fairness and ranking error decay rates are
bounded away from zero in the local neighborhood. The \(\PFS_{\mathfrak f}(T)\) decay rate is therefore
governed by this active fairness comparison.
By Proposition \ref{prop:local} and Theorem \ref{thm:rate}, the local \(\PFS_{\mathfrak f}(T)\) decay rate is
\[
\frac{
\left(
\kappa-
\left|
d_{k^\star_{\mathfrak f},g_0g_0'}^\psi
\right|
\right)^2
}{
2V_{k^\star_{\mathfrak f},g_0g_0'}^\psi(\bm{\alpha})
}
+
o\!\left((\kappa-\kappa_0)^2\right).
\]
Using
\(
\left|
d_{k^\star_{\mathfrak f},g_0g_0'}^\psi
\right|
=
\kappa_0,
\)
this becomes
\[
\frac{
(\kappa-\kappa_0)^2
}{
2V_{k^\star_{\mathfrak f},g_0g_0'}^\psi(\bm{\alpha})
}
+
o\!\left((\kappa-\kappa_0)^2\right)
=
\frac{
\rho(\kappa)^2
}{
2V_{k^\star_{\mathfrak f},g_0g_0'}^\psi(\bm{\alpha})
}
+
o\!\left(\rho(\kappa)^2\right).
\]
Thus, for a fixed allocation \(\bm{\alpha}\), the local error decay rate in this
case has the form
\[
C_{\mathfrak f}^{(1)}(\bm{\alpha})\rho(\kappa)^2
+
o\!\left(\rho(\kappa)^2\right),
\qquad
C_{\mathfrak f}^{(1)}(\bm{\alpha})
=
\frac{
1
}{
2V_{k^\star_{\mathfrak f},g_0g_0'}^\psi(\bm{\alpha})
},
\]
when \(\alpha_{k^\star_{\mathfrak f},g_0}>0\) and \(\alpha_{k^\star_{\mathfrak f},g_0'}>0\).

\paragraph{Case 2. a superior unfair policy becomes nearly fair as \(\kappa\uparrow\kappa_0\).}

Suppose there exists a policy \(k_0\) with objective value strictly larger than
that of the current best fair policy, but \(k_0\) violates the fairness
constraint when \(\kappa<\kappa_0\). Assume that, locally, this violation is
driven by a unique active group pair \((g_0,g_0')\), with
\(
\left|
d_{k_0,g_0g_0'}^\psi
\right|
=
\kappa_0.
\)
Thus, for \(\kappa<\kappa_0\) close to \(\kappa_0\), the policy \(k_0\) is
truly unfair, but its distance to the fairness boundary is
\[
\left|
d_{k_0,g_0g_0'}^\psi
\right|-\kappa
=
\kappa_0-\kappa
=
\rho(\kappa).
\]

Because \(k_0\) has a strictly larger objective value than the incumbent best
fair policy, the ranking event that \(k_0\) beats the current best fair policy has probability
tending to one and does not determine the local \(\PFS_{\mathfrak f}(T)\) decay rate. The false selection
event associated with \(k_0\) is therefore dominated by the event that this
nearly binding violated comparison is empirically classified as fair.
\[
\left|
\psi(\widehat P_{k_0g_0})
-
\psi(\widehat P_{k_0g_0'})
\right|
\le \kappa.
\]
All other fairness and ranking comparisons can be assumed to have
error decay rates bounded away from zero in this local neighborhood. Therefore,
the local \(\PFS_{\mathfrak f}(T)\) decay rate is governed by the active fairness comparison.
By Proposition \ref{prop:local} and Theorem \ref{thm:rate}, the local \(\PFS_{\mathfrak f}(T)\) decay rate is
\[
\frac{
\left(
\left|d_{k_0,g_0g_0'}^\psi\right|-\kappa
\right)^2
}{
2V_{k_0,g_0g_0'}^\psi(\bm{\alpha})
}
+
o\!\left((\kappa-\kappa_0)^2\right).
\]
Since \(\left|d_{k_0,g_0g_0'}^\psi\right|=\kappa_0\), this becomes
\[
\frac{
(\kappa_0-\kappa)^2
}{
2V_{k_0,g_0g_0'}^\psi(\bm{\alpha})
}
+
o\!\left((\kappa-\kappa_0)^2\right)
=
\frac{
\rho(\kappa)^2
}{
2V_{k_0,g_0g_0'}^\psi(\bm{\alpha})
}
+
o\!\left(\rho(\kappa)^2\right).
\]
Hence, for a fixed allocation \(\bm{\alpha}\), the local error decay rate in this
case has the form
\[
C_{\mathfrak f}^{(2)}(\bm{\alpha})\rho(\kappa)^2
+
o\!\left(\rho(\kappa)^2\right),
\qquad
C_{\mathfrak f}^{(2)}(\bm{\alpha})
=
\frac{
1
}{
2V_{k_0,g_0g_0'}^\psi(\bm{\alpha})
},
\]
when \(\alpha_{k_0,g_0}>0\) and \(\alpha_{k_0,g_0'}>0\).

For \(\kappa\) sufficiently close to \(\kappa_0\), the local analysis maintains a stable active comparison and an allocation solution that varies continuously with \(\kappa\). We restrict attention to allocations with positive shares on \((k^\star_{\mathfrak f},g_0)\), \((k^\star_{\mathfrak f},g_0')\), \((k_0,g_0)\), and \((k_0,g_0')\). Optimizing the two one-sided rate expressions over this set yields finite positive constants \(C^{(1)}_{(\psi,\kappa_0)}\) and \(C^{(2)}_{(\psi,\kappa_0)}\). When the same active comparison governs the two local regimes, these constants coincide. We denote their common value by \(C_{(\psi,\kappa_0)}\). This stability condition is the local regularity used in the main text expansion.

The two cases show that, from either side of the critical tolerance, the
\(\mathrm{PFS}_{\mathfrak f}(T)\) decay rate depends on the squared distance between \(\kappa\) and \(\kappa_0\). Hence, locally,
\begin{equation}
\label{eq:C-expansion-proof}
\Phi^\star_{\mathfrak f}
=
C_{(\psi,\kappa_0)}\rho(\kappa)^2
+
o\!\left(\rho(\kappa)^2\right),
\qquad
\rho(\kappa)=|\kappa-\kappa_0|\to0.
\end{equation}
Therefore,
\[
\Phi^\star_{\mathfrak f}
=
C_{(\psi,\kappa_0)}(\kappa-\kappa_0)^2
+
o\!\left((\kappa-\kappa_0)^2\right).
\]

We now use this local rate to derive the implementation cost. Let
\(
\mathrm{PFS}_{\mathfrak f}(T)
=
1-\PCS_{\mathfrak f}(T).
\)
By the exponential approximation for the false selection probability,
\[
-\log \mathrm{PFS}_{\mathfrak f}(T)
=
T\Phi^\star_{\mathfrak f}(1+o(1)).
\]
Equivalently,
\[
1-\PCS_{\mathfrak f}(T)=
\mathrm{PFS}_{\mathfrak f}(T)
=
\exp
\left\{
-
T\Phi^\star_{\mathfrak f}(1+o(1))
\right\}.
\]

By definition, \(B_{\mathfrak f}(\delta)\) is the smallest budget \(T\)
such that
\(
\PCS_{\mathfrak f}(T)\ge 1-\delta,
\)
or equivalently,
\(
\mathrm{PFS}_{\mathfrak f}(T)\le \delta.
\)
Using the exponential approximation, this condition is equivalent to
\[
T\Phi^\star_{\mathfrak f}(1+o(1))
\ge
\log(1/\delta).
\]
Thus, the minimum budget satisfies
\[
B_{\mathfrak f}(\delta)
=
\frac{
\log(1/\delta)
}{
\Phi^\star_{\mathfrak f}
}
(1+o(1)).
\]
Using the local expansion
\[
\Phi^\star_{\mathfrak f}
=
C_{(\psi,\kappa_0)}(\kappa-\kappa_0)^2
+
o\!\left((\kappa-\kappa_0)^2\right),
\]
gives
\[
B_{\mathfrak f}(\delta)
=
\frac{
\log(1/\delta)
}{
C_{(\psi,\kappa_0)}(\kappa-\kappa_0)^2
}
(1+o(1)).
\]

It remains to derive the tightest implementable tolerance above the critical tolerance. By definition,
\[
\kappa_{\mathrm{impl},\mathfrak f}(T,\delta)
=
\inf
\left\{
\kappa>\kappa_0\mid
B_{\mathfrak f}(\delta)\le T
\right\}.
\]
Using the asymptotic expression for \(B_{\mathfrak f}(\delta)\), the
budget constraint becomes
\[
\frac{
\log(1/\delta)
}{
C_{(\psi,\kappa_0)}(\kappa-\kappa_0)^2
}
(1+o(1))
\le
T.
\]
Equivalently,
\[
(\kappa-\kappa_0)^2
\ge
\frac{
\log(1/\delta)
}{
C_{(\psi,\kappa_0)}T
}
(1+o(1)).
\]
Since \(\kappa>\kappa_0\), taking the positive square root gives
\[
\kappa-\kappa_0
\ge
\sqrt{
\frac{
\log(1/\delta)
}{
C_{(\psi,\kappa_0)}T
}
}
(1+o(1)).
\]
The tightest implementable tolerance above the critical tolerance is the smallest \(\kappa\) satisfying this
inequality. Therefore,
\[
\kappa_{\mathrm{impl},\mathfrak f}(T,\delta)-\kappa_0
=
\sqrt{
\frac{
\log(1/\delta)
}{
C_{(\psi,\kappa_0)}T
}
}
(1+o(1)).
\]
The local expansion is valid in the regime
\(
\kappa_{\mathrm{impl},\mathfrak f}(T,\delta)\downarrow\kappa_0,
\)
equivalently when
\(
{\log(1/\delta)}/{T}
\) is sufficiently close to zero.
This completes the proof.
\Halmos
\endproof

\subsection{Proof of Corollary \ref{cor:separation}}

\proof{}
The result follows from the asymptotic implementation cost formula in Theorem
\ref{thm:wedge}. For each parity \((\psi_i,\kappa_i)\), \(i=1,2\), we have
\[
B_{(\psi_i,\kappa_i)}(\delta)
=
\frac{
\log(1/\delta)
}{
C_{(\psi_i,\kappa_0)}(\kappa_i-\kappa_0)^2
}
(1+o(1)),
\]
in the local regime where \(\kappa\) is sufficiently close to \(\kappa_0\).

Since
\(
C_{(\psi_1,\kappa_0)}>C_{(\psi_2,\kappa_0)}>0,
\)
we have
\(
{1}/{C_{(\psi_1,\kappa_0)}}
<
{1}/{C_{(\psi_2,\kappa_0)}}.
\)
Therefore, for any \(T>0\) and \(\delta\in(0,1)\),
\[
\sqrt{
\frac{\log(1/\delta)}{C_{(\psi_1,\kappa_0)}T}
}
<
\sqrt{
\frac{\log(1/\delta)}{C_{(\psi_2,\kappa_0)}T}
}.
\]
It follows that the interval
\[
I(T,\delta)
=
\left[
\kappa_0+
\sqrt{
\frac{\log(1/\delta)}{C_{(\psi_1,\kappa_0)}T}
},
\,
\kappa_0+
\sqrt{
\frac{\log(1/\delta)}{C_{(\psi_2,\kappa_0)}T}
}
\right)
\]
has positive length.

Now take any \(\kappa_1\in I(T,\delta)\). From the lower endpoint of the interval,
\[
\kappa_1-\kappa_0
\ge
\sqrt{
\frac{\log(1/\delta)}{C_{(\psi_1,\kappa_0)}T}
}.
\]
Squaring both sides gives
\[
(\kappa_1-\kappa_0)^2
\ge
\frac{\log(1/\delta)}{C_{(\psi_1,\kappa_0)}T},
\]
or equivalently,
\[
\frac{
\log(1/\delta)
}{
C_{(\psi_1,\kappa_0)}(\kappa_1-\kappa_0)^2
}
\le
T.
\]
By the asymptotic expression for \(B_{(\psi_1,\kappa_1)}(\delta)\), this implies
\(
B_{(\psi_1,\kappa_1)}(\delta)
\le
T(1+o(1)).
\)

Similarly, take any \(\kappa_2\in I(T,\delta)\), from the upper endpoint of the interval,
\[
\kappa_2-\kappa_0
<
\sqrt{
\frac{\log(1/\delta)}{C_{(\psi_2,\kappa_0)}T}
}.
\]
Hence
\[
(\kappa_2-\kappa_0)^2
<
\frac{\log(1/\delta)}{C_{(\psi_2,\kappa_0)}T},
\]
and therefore
\[
\frac{
\log(1/\delta)
}{
C_{(\psi_2,\kappa_0)}(\kappa_2-\kappa_0)^2
}
>
T.
\]
Using the asymptotic expression for \(B_{(\psi_2,\kappa_2)}(\delta)\), we obtain
\(
B_{(\psi_2,\kappa_2)}(\delta)
>
T(1+o(1)).
\)

Thus, for any \(\kappa\in I(T,\delta)\),
\[
B_{(\psi_1,\kappa_1)}(\delta)
\le
T(1+o(1))
<
B_{(\psi_2,\kappa_2)}(\delta).
\]
Therefore, under the same finite sampling budget,
\((\psi_1,\kappa_1)\) is statistically implementable while \((\psi_2,\kappa_2)\) is not
implementable at \(\mathrm{PCS}_{\mathfrak f}(T)\) target \(1-\delta\), even though the two specifications share the same fair policy set near \(\kappa_0\). Since \(\kappa_1\) and \(\kappa_2\) are arbitrary, setting \(\kappa_1=\kappa_2\) does not affect the conclusion. This proves the corollary.
\Halmos
\endproof

\subsection{Proof of Lemma \ref{lemma2.2}}
\proof{}
Our goal can be reformulated into 
\begin{equation}
\max_{N_{11}, N_{12}, \dots, N_{KG}} \mathrm{PCS}^{\pi}_{\mathfrak f}(T) \quad \text{subject to} \quad N_{11} + N_{12} + \cdots + N_{KG} = T. \label{equ2}
\end{equation}
where \(\mathrm{PCS}^{\pi}_{\mathfrak f}(T)\) is the probability of a correct selection under sampling budget \(T\) and allocation rule \(\pi\).

To streamline computations, we derive an asymptotic budget allocation strategy by considering the regime \(T \to \infty\). Let \(\alpha_{kg}\) denote the proportion of total budget allocated to policy-group pair \((k,g)\), so \(N_{kg}=\alpha_{kg}T\) with \(\sum \alpha_{kg}=1\). Problem \eqref{equ2} can be reformulated as follows.
\begin{equation}
\max_{\alpha_{11},\dots,\alpha_{KG}} \mathrm{PCS}^{\pi}_{\mathfrak f}(T) \quad \text{s.t.} \quad \sum \alpha_{kg}=1.
\label{equ4}
\end{equation}

When picking policies based on their estimated aggregate performance and empirical distribution, successfully choosing the best fair policy $k^\star_{\mathfrak f}$ as the right selection depends on two critical conditions being satisfied at once. First, policy $k^\star_{\mathfrak f}$ is estimated to be fair; second, there are no other policies that perform as well as or better than $k^\star_{\mathfrak f}$ in the empirical fair policy set.

Thus
\begin{equation}
\mathrm{PCS}^{\pi}_{\mathfrak f}(T) = \mathbb{P}\left\{
    \{k^\star_{\mathfrak f}
\in\widehat\cF_{\mathfrak f}\}\cap 
    \bigcap_{\substack{k=1 \\ k \neq k^\star_{\mathfrak f}}}^K
    \left\{
        \left[
           (k
\in\widehat\cF_{\mathfrak f}) \cap (\widehat\mu_{k^\star_{\mathfrak f}} \leq \widehat\mu_k)
        \right]^c
    \right\}
\right\}
\end{equation}

One key challenge when tackling \eqref{equ2} is that $\mathrm{PCS}_{\mathfrak f}(T)$ does not have an explicit closed-form formulation. By introducing the stated rate approximations, we obtain a tractable allocation strategy and a direct view of the comparisons that govern the budget decision in \eqref{equ2}.

\begin{equation}
\PFS^{\pi}_{\mathfrak f}(T) =1-\PCS^{\pi}_{\mathfrak f}(T)=  \mathbb{P}\left((k^\star_{\mathfrak f}
\notin\widehat\cF_{\mathfrak f})\bigcup_{\substack{ k=1\\k \neq {k^\star_{\mathfrak f}}}}^K (  k^\star_{\mathfrak f}
\in\widehat\cF_{\mathfrak f} ,k
\in\widehat\cF_{\mathfrak f}, \widehat\mu_{k^\star_{\mathfrak f}} \leq  \widehat\mu_k )\right) 
\end{equation}
\[
\begin{aligned}
\max\left( \mathbb{P}(E_0), \max_{ k \neq {k^\star_{\mathfrak f}}}  \mathbb{P}(E_k) \right)\leq \PFS^{\pi}_{\mathfrak f}(T) \leq  K\max\left( \mathbb{P}(E_0), \max_{ k \neq {k^\star_{\mathfrak f}}}  \mathbb{P}(E_k) \right).
\end{aligned}
\]
We take the negative logarithm of both sides, divide by $T$, and then take the limit as $T$ tends to infinity, while
\[
\lim_{T\to \infty} -\frac{1}{T}\log\left(\max \mathbb{P}(E_k)\right)= \min\left( - \lim_{T\to\infty} \frac{1}{T} \log \mathbb{P}(E_k) \right) ,
\]
we get
\[\lim_{T\to \infty} -\frac{1}{T}\log \PFS^{\pi}_{\mathfrak f}(T) = \min\left( - \lim_{T\to\infty} \frac{1}{T} \log \mathbb{P}(E_0), \min_{ k \neq {k^\star_{\mathfrak f}}} \left( - \lim_{T\to\infty} \frac{1}{T} \log \mathbb{P}(E_k) \right) \right).\]
\Halmos
\endproof

\subsection{Proof of Theorem \ref{thm1}}

\proof{}
For notational simplicity, write
\(
k^\star=k^\star_{\mathfrak f}.
\)
The false selection event associated with the best fair policy is the event
that \(k^\star\) is incorrectly classified as unfair.
\[
E_0
=
\left\{
k^\star\notin \widehat{\mathcal F}_{\mathfrak f}(\kappa)
\right\}.
\]
By the definition of the empirical fair policy set, this event can be written as
\[
E_0
=
\left\{
\max_{g<g'}
\left|
\psi(\widehat P_{k^\star g})
-
\psi(\widehat P_{k^\star g'})
\right|
>
\kappa
\right\}.
\]
Equivalently,
\(
E_0
=
\bigcup_{g<g'}
A_{gg'},
\)
where
\[
A_{gg'}
=
\left\{
\left|
\psi(\widehat P_{k^\star g})
-
\psi(\widehat P_{k^\star g'})
\right|
>
\kappa
\right\}.
\]

Since there are \(G(G-1)/2\) group pairs, the finite union bound gives
\[
\max_{g<g'}
\mathbb P(A_{gg'})
\le
\mathbb P(E_0)
\le
\frac{G(G-1)}{2}
\max_{g<g'}
\mathbb P(A_{gg'}).
\]
Taking logarithms, dividing by \(T\), and letting \(T\to\infty\), the constant
factor \(G(G-1)/2\) vanishes at the exponential scale. Therefore,
\[
\lim_{T\to\infty}
\frac{1}{T}
\log
\mathbb P(E_0)
=
\max_{g<g'}
\lim_{T\to\infty}
\frac{1}{T}
\log
\mathbb P(A_{gg'}).
\]

For each group pair \((g,g')\), Assumption \ref{ass:local-rate} gives the local verification error decay rate
\[
-\lim_{T\to\infty}
\frac{1}{T}
\log
\mathbb P(A_{gg'})
=
\frac{1}{2H_{k^\star,gg'}^\psi(\kappa;\bm{\alpha})}.
\]
Hence,
\[
\lim_{T\to\infty}
\frac{1}{T}
\log
\mathbb P(E_0)
=
-
\min_{g<g'}
\frac{1}{2H_{k^\star,gg'}^\psi(\kappa;\bm{\alpha})}.
\]
Equivalently,
\[
-\lim_{T\to\infty}
\frac{1}{T}
\log
\mathbb P(E_0)
=
\min_{g<g'}
\frac{1}{2H_{k^\star,gg'}^\psi(\kappa;\bm{\alpha})}.
\]
This proves the theorem.
\Halmos
\endproof

\subsection{Proof of Theorem \ref{thm:mean-rate1}}

\proof{}
For notational simplicity, write
\(
k^\star = k^\star_{\mathfrak f}.
\)
We prove the large deviation rate of the ranking error event
\(
\left\{
\widehat\mu_k \geq \widehat\mu_{k^\star}
\right\}
\)
under the true condition
\(
\mu_k < \mu_{k^\star}.
\)

For each policy-group pair \((k,g)\), let
\[
\widehat\mu_{kg}
=
\frac{1}{N_{kg}}
\sum_{i=1}^{N_{kg}}Y_{kg,i}
\]
denote the sample mean. We assume that, for each relevant distribution
\(R_{kg}\), the log moment-generating function
\[
\Lambda_{R_{kg}}(\theta)
=
\log \mathbb E_{R_{kg}}\!\left[e^{\theta Y}\right]
\]
is finite in an open neighborhood of zero. By Cramér's theorem,
\(\widehat\mu_{kg}\) satisfies a large deviation principle with rate function
\[
I_{R_{kg}}(m)
=
\sup_{\theta\in\mathbb R}
\left\{
\theta m-\Lambda_{R_{kg}}(\theta)
\right\}.
\]
Since \(N_{kg}/T\to\alpha_{kg}\), the contribution of group \(g\) to the
\(T\)-scale rate is
\(
\alpha_{kg}I_{R_{kg}}(m_{kg}).
\)

Because samples are independent across policy-group pairs, the joint vector of
group sample means for policies \(k\) and \(k^\star\),
\[
\left(
\{\widehat\mu_{kg}\}_{g\in\mathcal G_+},
\{\widehat\mu_{k^\star g}\}_{g\in\mathcal G_+}
\right),
\]
satisfies a joint large deviation principle with \(T\)-scale rate function
\[
I
\left(
\{m_{kg}\},
\{m_{k^\star g}\}
\right)
=
\sum_{g\in\mathcal G_+}
\alpha_{kg}I_{R_{kg}}(m_{kg})
+
\sum_{g\in\mathcal G_+}
\alpha_{k^\star g}I_{R_{k^\star g}}(m_{k^\star g}).
\]

Now define the continuous linear map
\[
\Psi
\left(
\{m_{kg}\},
\{m_{k^\star g}\}
\right)
=
\sum_{g\in\mathcal G_+}w_g m_{kg}
-
\sum_{g\in\mathcal G_+}w_g m_{k^\star g}.
\]
This map is the empirical difference between the aggregate weighted objective
of policy \(k\) and that of policy \(k^\star\). By the contraction principle,
the scalar random variable
\(
\widehat\mu_k-\widehat\mu_{k^\star}
\)
satisfies an LDP with rate function
\[
J_{k,k^\star}(t;\bm\alpha)
=
\inf_{\substack{
\{m_{kg}\},\{m_{k^\star g}\}\\
\sum_{g\in\mathcal G_+}w_g m_{kg}
-
\sum_{g\in\mathcal G_+}w_g m_{k^\star g}
=
t
}}
\left[
\sum_{g\in\mathcal G_+}
\alpha_{kg}I_{R_{kg}}(m_{kg})
+
\sum_{g\in\mathcal G_+}
\alpha_{k^\star g}I_{R_{k^\star g}}(m_{k^\star g})
\right].
\]

The ranking error event is
\(
\left\{
\widehat\mu_k-\widehat\mu_{k^\star}\geq0
\right\}.
\)
By the LDP, its exponential decay rate is determined by the smallest rate over
the error region. Under the true condition
\(
\Delta
=
\mu_k-\mu_{k^\star}
< 0,
\)
the rate function \(J_{k,k^\star}(t;\bm\alpha)\) attains its minimum value zero
at \(t=\Delta\). Moreover, under the standard convexity and nondegeneracy
conditions for Cramér transforms, \(J_{k,k^\star}(t;\bm\alpha)\) is convex and
is nondecreasing to the right of its minimizer. Therefore, over the error region
\(t\ge0\), the smallest rate is attained at the boundary \(t=0\).
\[
\inf_{t\ge0}J_{k,k^\star}(t;\bm\alpha)
=
J_{k,k^\star}(0;\bm\alpha).
\]
Equivalently, the strict event \(t>0\) has the same exponential rate by the
standard boundary approximation for this ranking error event.

Substituting \(t=0\) into the expression above gives
\[
J_{k,k^\star}(0;\bm\alpha)
=
\inf_{\substack{
\{m_{kg}\},\{m_{k^\star g}\}\\
\sum_{g\in\mathcal G_+}w_g m_{kg}
=
\sum_{g\in\mathcal G_+}w_g m_{k^\star g}
}}
\left[
\sum_{g\in\mathcal G_+}
\alpha_{kg}I_{R_{kg}}(m_{kg})
+
\sum_{g\in\mathcal G_+}
\alpha_{k^\star g}I_{R_{k^\star g}}(m_{k^\star g})
\right].
\]
Hence,
\[
-\lim_{T\to\infty}
\frac{1}{T}
\log
\mathbb P
\left(
\widehat\mu_k\geq\widehat\mu_{k^\star}
\right)
=
J_{k,k^\star}(0;\bm\alpha),
\]
which is the desired result.

Recall that, for \(k\neq k^\star\),
\[
E_k
=
\left\{
k^\star\in\widehat{\mathcal F}_{\mathfrak f},\,
k\in\widehat{\mathcal F}_{\mathfrak f},\,
\widehat\mu_{k^\star}\leq\widehat\mu_k
\right\}.
\]
Since \(k^\star\) is truly fair and all policy-group pairs are sampled
consistently under the maintained sampling assumptions, the law of large
numbers implies
\[
\mathbb P\left\{
k^\star\in\widehat{\mathcal F}_{\mathfrak f}
\right\}
\to 1.
\]
Similarly, for any truly fair comparison
\((g,g')\in\mathcal C_F^k\),
\[
\mathbb P\left\{
\left|
\psi(\widehat P_{kg})
-
\psi(\widehat P_{kg'})
\right|
\le \kappa
\right\}
\to 1.
\]
Therefore, at the exponential scale, the comparisons in
\(\mathcal C_F^k\) do not contribute to the decay rate of event \(E_k\).

Thus, we ignore the effects of events whose probabilities converge to one,
\(
\mathbb P\{E_k\}
\)
is controlled by the joint event
\[
\left\{
\bigcap_{(g,g')\in\mathcal C_I^k}
\left\{
\left|
\psi(\widehat P_{kg})
-
\psi(\widehat P_{kg'})
\right|
\le \kappa
\right\},
\,
\widehat\mu_{k^\star}\leq\widehat\mu_k
\right\}.
\]
For any finite collection of events \(A_1,\dots,A_m\),
\[
\mathbb P\left(\bigcap_{i=1}^m A_i\right)
\le
\min_{1\le i\le m}\mathbb P(A_i).
\]
Applying this bound to the joint event 
gives the conservative decay rate bound for the decay rate of error event \(E_k\)
\[
\begin{split}
-\liminf_{T\to\infty}
\frac{1}{T}
\log
\mathbb P\{E_k\}
\ge
\max
\Bigg\{
&
\max_{(g,g')\in\mathcal C_I^k}
\left[
-\lim_{T\to\infty}
\frac{1}{T}
\log
\mathbb P
\left(
\left|
\psi(\widehat P_{kg})
-
\psi(\widehat P_{kg'})
\right|
\le \kappa
\right)
\right],
\\
&
-\lim_{T\to\infty}
\frac{1}{T}
\log
\mathbb P
\left(
\widehat\mu_{k^\star}\leq\widehat\mu_k
\right)
\Bigg\}=R_k(\bm{\alpha}).
\end{split}
\]
For \((g,g')\in\mathcal C_I^k\), the true comparison violates the fairness
constraint, so the event
\[
\left|
\psi(\widehat P_{kg})
-
\psi(\widehat P_{kg'})
\right|
\le \kappa
\]
is a false fairness event. By Assumption \ref{ass:local-rate}, its local decay rate is
\[
-\lim_{T\to\infty}
\frac{1}{T}
\log
\mathbb P
\left(
\left|
\psi(\widehat P_{kg})
-
\psi(\widehat P_{kg'})
\right|
\le \kappa
\right)
=
\frac{1}{2H_{k,gg'}^\psi(\kappa;\bm{\alpha})}.
\]

For the ranking event, if \(\mu_k< \mu_{k^\star}\), Theorem
\ref{thm:mean-rate1} gives
\[
-\lim_{T\to\infty}
\frac{1}{T}
\log
\mathbb P
\left(
\widehat\mu_{k^\star}\leq \widehat\mu_k
\right)
=
J_{k,k^\star}(0;\bm{\alpha}).
\]
If \(\mu_k\ge\mu_{k^\star}\), the event
\(\widehat\mu_{k^\star}\leq\widehat\mu_k\) has probability converging to one, and its
decay rate is zero.

We now recover the three policy classes.

For \(k\in\Gamma\), policy \(k\) is fair but suboptimal. Hence
\(\mathcal C_I^k=\emptyset\), and the conservative decay rate bound reduces to
the ranking event term.
\[
R_k(\bm{\alpha})
=
J_{k,k^\star}(0;\bm{\alpha}).
\]

For \(k\in\mathcal S_b\), policy \(k\) is unfair but has a no worse objective
value than \(k^\star\). Therefore the ranking event has decay rate zero. The conservative decay rate bound is driven by the false fairness events.
\[
R_k(\bm{\alpha})
=
\max_{(g,g')\in\mathcal C_I^k}
\frac{1}{2H_{k,gg'}^\psi(\kappa;\bm{\alpha})}.
\]

For \(k\in\mathcal S_w\), policy \(k\) is unfair and worse than
\(k^\star\). The objective ranking error event \(E_k\) requires both false fairness and a
ranking error, and the intersection bound yields
\[
R_k(\bm{\alpha})
=
\max
\left(
\max_{(g,g')\in\mathcal C_I^k}
\frac{1}{2H_{k,gg'}^\psi(\kappa;\bm{\alpha})},
\,
J_{k,k^\star}(0;\bm{\alpha})
\right).
\]
Thus, for each \(k\neq k^\star\), \(R_k(\bm{\alpha})\) used in the allocation problem provides the conservative local lower bound on the decay rate of the objective ranking error event
\(\mathbb P\{E_k\}\).
\Halmos
\endproof

\subsection{Proof of Theorem \ref{thm:rate}}

\proof{}
For notational simplicity, write
\(
k^\star = k^\star_{\mathfrak f}.
\)
Under Assumptions \ref{ass:lal} and \ref{ass:local-rate}, and under the Cram\'er regularity condition, we establish the result by combining the preceding decay rate characterizations with the event decomposition of the false selection probability.

By Lemma \ref{lemma2.2}, the exponential decay rate of the probability of false selection equals the minimum decay rate across all error events.
\[
\lim_{T\to\infty} -\frac{1}{T}\log \PFS^{\pi}_{\mathfrak f}(T)
=
\min\left\{
-\lim_{T\to\infty}\frac{1}{T}\log \mathbb P(E_0),
\;
\min_{k\neq k^\star}
\left(
-\lim_{T\to\infty}\frac{1}{T}\log \mathbb P(E_k)
\right)
\right\}.
\]
We substitute the decay rate results from the prior theorems into this decomposition. By Theorem \ref{thm1}, the fairness verification error decay rate admits the expression
\[
-\lim_{T\to\infty}\frac{1}{T}\log \mathbb P(E_0) = R_{k^\star}(\bm{\alpha}),
\]
and by Theorem \ref{thm:mean-rate1}, for every policy \(k\neq k^\star\), the objective ranking error decay rate satisfies the lower bound
\[
-\lim_{T\to\infty}\frac{1}{T}\log \mathbb P(E_k) \ge R_k(\bm{\alpha}).
\]
By the monotonicity of the minimum operator, if \(a_i \ge b_i\) for all \(i\), then \(\min_i a_i \ge \min_i b_i\), substituting these bounds into the rate decomposition yields
\[
\lim_{T\to\infty} -\frac{1}{T}\log \PFS^{\pi}_{\mathfrak f}(T)
\ge
\min\left\{
R_{k^\star}(\bm{\alpha}),
\;
\min_{k\neq k^\star} R_k(\bm{\alpha})
\right\}
=
\min_{k\in\mathcal K} R_k(\bm{\alpha}),
\]
which verifies Equation \eqref{eq:final}.

To maximize the decay rate of \(\PFS_{\mathfrak f}(T)\) over feasible allocation vectors, we maximize this tractable lower bound, which leads to a max-min optimization problem over the allocation simplex \(\Delta\). The resulting fairness-guided budget allocation rule solves
\[
\max_{\bm{\alpha}\in\Delta} \min_{k\in\mathcal K} R_k(\bm{\alpha}),
\qquad
\Delta=
\left\{
\bm{\alpha}\in\mathbb R_+^{K\times G}\mid
\sum_{k,g}\alpha_{kg}=1
\right\},
\]
as stated in Equation \eqref{eq:optimization}.
\Halmos
\endproof

\section{Additional Numerical Details}
\label{sec: numerical details}

\subsection{Implementation of OCBA-O.}
\label{sec:OCBA-O}

OCBA-O serves as a benchmark that determines the sampling budget allocation solely based on objective value estimates, without incorporating any fairness constraints. Our implementation adopts the same sequential structure as Algorithm~\ref{alg:two-stage}, i.e., each policy-group pair receives \(n_0\) initial samples, and the remaining budget is allocated incrementally in batches of size \(T_0\).

At the start of each batch, OCBA-O computes the estimated objective value \(\widehat\mu_k\) and its corresponding estimation variance for every policy \(k\). The estimated best-performing policy is given by $\widehat k_{\mu}
    \in
    \arg\max_k \widehat\mu_k$.
For each policy \(k\neq \widehat k_{\mu}\), we define the estimated objective gap as
\(
    \widehat\Delta^\mu_k
    =
    \widehat\mu_{\widehat k_{\mu}}-\widehat\mu_k .
\)

In the classical OCBA framework, allocation decisions are fundamentally driven by the variance of the sample mean estimator for each alternative. For a single policy with \(n_k\) independent samples and variance \(\sigma_k^2\), the variance of its sample mean is \(\sigma_k^2 / n_k\), which directly quantifies estimation uncertainty and shapes the optimal budget ratio. In our multi-group setting, we aggregate group-level noise into a policy-level variance to align with the standard OCBA formulation. For policy \(k\), the estimated variance of the objective estimator is
\[
    \widehat{\sigma}_{\mu,k}^2
    =
    \sum_{g\in\mathcal{G}_+}
    w_g^2 \cdot \frac{\widehat{\sigma}_{\mu,kg}^2}{n_{kg}},
\]
where \(n_{kg}\) is the cumulative sample count for policy-group pair \((k,g)\), and \(\widehat\sigma_{\mu,kg}\) is the estimated standard deviation of the objective output for policy-group pair \((k,g)\). This aggregation preserves the core variance structure of classical OCBA while adapting to our groupwise policy definition.

Policy-level sampling proportions are then calculated following the standard OCBA rule \citep{chen2000simulation}.

As each policy covers multiple groups, the policy-level allocation share must be further allocated across groups within the policy. For policy \(k\), the within-policy allocation is proportional to the group-weighted estimated standard deviation, i.e., $\omega_{kg}
    \propto
    w_g \widehat\sigma_{\mu,kg}$ and $\sum_g \omega_{kg}=1$, 
where \(w_g\) denotes the weight of group \(g\). The resulting policy-group allocation proportions are rounded to integer sample counts within the current batch, and all performance estimates are updated after the batch of samples is collected.

\subsection{Implementation of OCBA-C.}
\label{sec: OCBA-C}

OCBA-C is a constrained benchmark adapted from constrained R\&S methods \citep{lee2012approximate}. It uses the same sequential
sampling structure as Algorithm~\ref{alg:two-stage}. Each policy-group pair
receives \(n_0\) initial samples, and the remaining budget is allocated in
batches of size \(T_0\).

At the beginning of each batch, OCBA-C constructs the empirical fair policy set
\[
\widehat{\mathcal F}_{\mathfrak f}
=
\left\{
k\mid
\max_{g<g'}
\left|
\psi(\widehat P_{kg})-\psi(\widehat P_{kg'})
\right|
\le \kappa
\right\}.
\]
It then identifies the estimated best fair policy
\(
\widehat k_{\mathfrak f}
\in
\arg\max_{k\in \widehat{\mathcal F}_{\mathfrak f}}
\widehat\mu_k .
\)
If \(\widehat{\mathcal F}_{\mathfrak f}\) is empty, OCBA-C uses the
policy with the smallest estimated fairness violation as the provisional best
fair policy. For each policy \(k\), define the estimated fairness slack
\[
    \widehat s_k^\psi
    =
    \kappa
    -
    \max_{g<g'}
    \left|
    \psi(\widehat P_{kg})-\psi(\widehat P_{kg'})
    \right|.
\]

Let \(b=\widehat k_{\mathfrak f}\). OCBA-C assigns policy priorities from two
sources, namely objective ranking and constraint verification. For each estimated
fair competitor \(k\neq b\), define the estimated objective gap
$
    \widehat\Delta^\mu_{k,b}
    =
    \widehat\mu_b-\widehat\mu_k .
$
The objective ranking score for comparing \(k\) with the current estimated best fair policy
is
\[
    D^\mu_{k,b}
    =
    \frac{\widehat V^\mu_{k,b}}
    {(\widehat\Delta^\mu_{k,b})^2+\epsilon},
\]
where \(\widehat V^\mu_{k,b}\) is the estimated variance of the objective
comparison between policies \(k\) and \(b\). This score is assigned to both
policies in the comparison. Thus, the objective priority of the current estimated best fair
policy is accumulated over its estimated fair competitors.
\[
    B^\mu_b
    =
    \sum_{\substack{k\in \widehat{\mathcal F}_{\mathfrak f}\\ k\neq b}}
    D^\mu_{k,b}.
\]
For an estimated fair competitor \(k\neq b\), set \(B^\mu_k=D^\mu_{k,b}\).
Estimated infeasible policies receive no objective ranking score.

OCBA-C also assigns a constraint verification score. Let
\(\widehat\sigma^2_{\psi,k}\) be the estimated variance of the fairness gap
estimator for policy \(k\). The constraint score is
\[
    B^\psi_k
    =
    \frac{\widehat\sigma^2_{\psi,k}}
    {(\widehat s_k^\psi)^2+\epsilon}.
\]
Estimated feasible
non-best policies receive no constraint verification score.

The overall policy priority is
$
    B^C_k
    =
    \lambda_\mu B^\mu_k
    +
    \lambda_c B^\psi_k .
$
Policy priorities are mapped to policy-group priorities by splitting the
objective and constraint terms separately. The objective term uses the OCBA-O method within-policy split.
\[
    \omega^\mu_{kg}
    =
    \frac{w_g \widehat\sigma_{\mu,kg}}
    {\sum_{g'} w_{g'} \widehat\sigma_{\mu,kg'}},
    \qquad
    \sum_g \omega^\mu_{kg}=1,
\]
where \(w_g\) is the weight of group \(g\), and
\(\widehat\sigma_{\mu,kg}\) is the estimated standard deviation of the objective
output for policy-group pair \((k,g)\). The constraint term is split by the
estimated uncertainty of the fairness metric.
\[
    \omega^\psi_{kg}
    =
    \frac{\widehat\sigma_{\psi,kg}}
    {\sum_{g'} \widehat\sigma_{\psi,kg'}},
    \qquad
    \sum_g \omega^\psi_{kg}=1,
\]
where \(\widehat\sigma_{\psi,kg}\) is the estimated standard deviation of
\(\psi(P_{kg})\). The resulting policy-group priority is
\[
    S^C_{kg}
    =
    \lambda_\mu B^\mu_k \omega^\mu_{kg}
    +
    \lambda_c B^\psi_k \omega^\psi_{kg}.
\]
Normalizing \(S^C_{kg}\) over all policy-group pairs gives the allocation
proportions for the next batch.
The budget \(T_0\) is
allocated by rounding these proportions to integer sample counts, and all
estimates are updated after the batch. In the experiments, we set
\(\lambda_\mu=\lambda_c=1\).

\subsection{Details of the First Synthetic Experiment}
\label{sec: first synthetic}
The location parameters are $\theta_{k1}=a_k+\frac{d_k}{2}$ and $\theta_{k2}=a_k-\frac{d_k}{2}$, 
where \(a_k\) controls average waiting time and \(d_k\) controls absolute group disparity. We set
\begin{align*}
    a
    &=
    (1.20, 1.10, 1.00, 0.90, 0.80, 0.70, 0.60, 0.50, 0.85, 0.75),\\
    d&
    =
    (0.02, 0.05, 0.08, 0.12, 0.16, 0.20, 0.25, 0.30, 0.14, 0.18).
\end{align*}
This design induces an efficiency--fairness tradeoff in which policies with shorter average waiting times tend to exhibit larger group disparities, reflecting service settings in which efficiency gains are unevenly distributed across groups.

The policy-specific operating costs are set to
\[
c
=
(0.000, 0.002, 0.003, 0.004, 0.005, 0.006, 0.008, 0.010, 0.005, 0.007).
\]
For the light-tailed regime, we use Gaussian noise
$
Z_k \sim N(0,\sigma_k^2),
$
with standard deviations
\[
\sigma
=
(0.077, 0.077, 0.081, 0.084, 0.088, 0.091, 0.098, 0.105, 0.088, 0.095).
\]
For the heavy-tailed regime, we let $Z_k
=
Y_k-\mathbb E[Y_k]$ and $Y_k \sim \mathrm{Lognormal}(m_k,s_k^2)$, 
so that the noise is mean-centered but retains a heavy right tail. We fix
\(
s_k^2 = 1
\), for all \(k\), and select \(m_k\) so that the heavy-tailed regime has the same variance as the corresponding Gaussian regime, namely
\(
\mathrm{Var}(Z_k)=\mathrm{Var}(Y_k)=\sigma_k^2.
\)
Using the lognormal variance formula,
\[
\mathrm{Var}(Y_k)
=
\bigl(e^{s_k^2}-1\bigr)e^{2m_k+s_k^2},
\]
this implies
\[
m_k
=
\log\!\left(
\frac{\sigma_k}{\sqrt{e-1}}
\right)
-\frac{1}{2}.
\]
Thus, the light- and heavy-tailed regimes are calibrated to have comparable marginal variance while exhibiting markedly different tail behavior.

In the first synthetic experiment, we fix a common normalized tolerance level
$
\bar{\kappa}=0.225,
$
and convert it into a metric-specific absolute fairness threshold. For each
fairness metric \(\psi\), we define
\[
D_k^\psi = \max_{g<g'} \left|\psi(P_{kg})-\psi(P_{kg'})\right|,
\]
and set the corresponding absolute threshold as
\(
\kappa^\psi = \bar{\kappa}\,\max_k D_k^\psi.
\)
In this way, the three fairness metrics are compared at the same normalized
tolerance level, while allowing their absolute thresholds to differ according
to the scale of \(D_k^\psi\).

We also report upper tail waiting time parity results under both light-tailed and heavy-tailed regimes for Synthetic Experiment I in Figure~\ref{fig:cvar_results}.

\begin{figure}[t]
    \centering
    \includegraphics[width=0.85\linewidth]{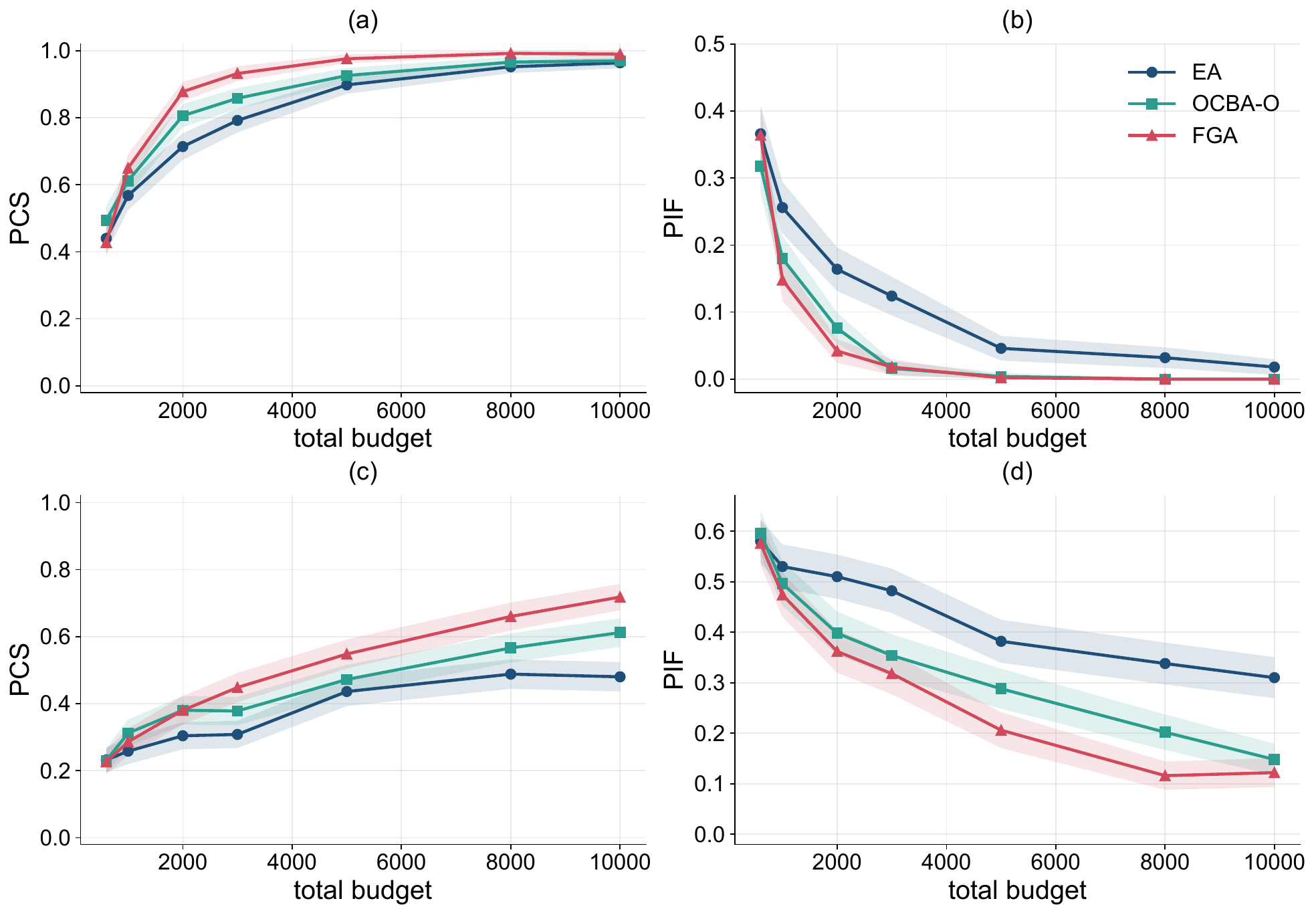}
    \caption{Upper Tail Waiting Time Parity Results for Synthetic Experiment I.
    (a) PCS under the parity in the light-tailed regime.
    (b) PIF under the parity in the light-tailed regime.
    (c) PCS under the parity in the heavy-tailed regime.
    (d) PIF under the parity in the heavy-tailed regime.}
    \label{fig:cvar_results}
\end{figure}

Under light-tailed noise, FGA again
outperforms the benchmarks, achieving higher PCS and lower PIF across the
sampling budget range. Under heavy-tailed noise, all methods exhibit slower improvement, with PCS increasing more slowly and PIF remaining elevated even at larger budgets. This pattern is consistent with the theoretical prediction that tail-sensitive fairness metrics are
particularly difficult to verify when the outcome distribution has heavy tails.

\subsection{Details of the Second Synthetic Experiment}
In the second synthetic experiment, the true objective values and corresponding fairness disparities are
\begin{align*}
    \mu
    &=
    (1.00,0.98,0.96,0.94,0.92,0.90,0.88,0.86,0.84,0.82,0.80,0.78),\\
    D
    &=
    (0.18,0.15,0.11,0.095,0.090,0.070,0.060,0.050,0.040,0.030,0.020,0.010).
\end{align*}

\subsection{Details of the Call Center Experiment}
\label{sec: call center exp}
The
system operates over an \(8\)-hour horizon with \(11\) parallel servers, and
group weights are set to \(w_1=w_2=0.5\). Customer arrivals follow
nonstationary Poisson processes with piecewise-constant hourly rates
\begin{align*}
\lambda_1&=(42,55,70,86,92,82,60,45),\\
\lambda_2&=(38,50,65,78,88,92,70,52).
\end{align*}
Service times are group-dependent and lognormally distributed with $S_1 \sim \mathrm{Lognormal}(5.348496,\,0.264285)$ and $S_2 \sim \mathrm{Lognormal}(5.504394,\,0.398776)$. Patience times are exponential with means $\mathbb{E}[P_1]=420$ and 
$\mathbb{E}[P_2]=360$
(seconds). A customer abandons if service has not started before the patience
time expires.

We consider \(K=12\) policies, formed by combining three congestion thresholds
\(
b \in \{5,10,15\}
\)
with four queue disciplines, namely FCFS, static priority for group 1, static priority
for group 2, and weighted priority. For weighted priority, the group weights are
set to \((1.00,1.35)\). The service performance threshold is \(120\) seconds.

The policy complexity score \(C_k\) is determined by the queue discipline and
the congestion threshold. Specifically, the base complexity is set to \(0\) for
FCFS, \(0.45\) for static priority rules, and \(0.70\) for weighted priority,
and we add a threshold adjustment
\(
0.25 \times \frac{15-b}{20},
\)
where \(b \in \{5,10,15\}\). Thus,
\[
C_k
=
\min\!\left\{
1,\;
C_k^{\mathrm{base}}
+
0.25\frac{15-b_k}{20}
\right\}.
\]

Each algorithm begins with \(n_0=20\) initial replications per policy-group
pair. One replication is an independently seeded simulated day. It returns a
group-level objective contribution and the within-day batch of group-specific
customer waits. The simulated day is the independent sampling unit; customer
records observed within a day are retained as a batch when estimating the
waiting time distribution. For FGA, the variance of each fairness functional
is estimated from day-level influence contributions. This construction
accounts for within-day dependence and random group-specific customer counts.

\subsection{Details of the Emergency Department Experiment}
\label{sec: healthcare exp}

The emergency department operates over a 12-hour day with \(G=2\) patient
groups. Patient arrivals follow a nonstationary Poisson process with
piecewise-constant hourly rates

$$
    (14,22,30,20).
$$

Each patient is assigned to one of the two groups with equal probability and
is high-acuity with probability \(0.35\). The system includes two triage
nurses, three main clinicians, two fast-track clinicians, and one
communication support staff member. Group~2 patients require communication
support with probability \(0.90\).

Triage, treatment, and communication support times are lognormal. Mean triage
times are 5 and 9 minutes for groups 1 and 2, respectively. Mean main
treatment times are 44 and 52 minutes for high-acuity patients and 30 and
38 minutes for low-acuity patients. Mean fast-track treatment times are
14 and 26 minutes, and mean communication support time is 14 minutes.
Patient patience times are exponential, with means of 95 minutes for
high-acuity patients and 70 minutes for low-acuity patients. Patients abandon
if service has not begun before their patience time expires.

The service burden outcome is the waiting time \(W_{kg}\) for group \(g\)
under policy \(k\). For high-acuity patients, waiting time is measured as
door-to-provider time, whereas for low-acuity patients it is measured from
arrival to completion. For patients who abandon, waiting time is the time
spent in the system before abandonment.

One independently seeded simulated day is one outer replication. Each day
produces the objective contribution and a batch of patient-level waiting
times for each group. Fairness estimators pool the within-day batches and
estimate uncertainty from day-level influence contributions. Thus, the
effective independence assumption applies across simulated days rather than
across individual patients within the same day.

In the emergency department experiments, \(C_k\) denotes the normalized implementation complexity of policy \(k\). It is a policy-specific scalar used to penalize operationally more complicated scheduling rules in the objective function. For the 12 candidate policies in the near-boundary emergency department experiment, the complexity values are
\[
(0.00,\ 0.02,\ 0.04,\ 0.06,\ 0.08,\ 0.10,\ 0.12,\ 0.14,\ 0.16,\ 0.16,\ 0.18,\ 0.25),
\]
Larger values of \(C_k\) represent policies that are more operationally complex to implement and maintain.

This subsection also presents supplementary experimental results for the emergency department setting, which complement the main analysis under upper tail parity in Section \ref{sec:ed}. We report results for two additional fairness parities, namely mean waiting time performance parity and 90th quantile of waiting time parity. The supplementary experiments use the same emergency department configuration
and candidate policy set described above.

For mean waiting time performance parity, we set fairness tolerance \(\kappa=13\).

\begin{figure}[htbp]
    \centering
    \includegraphics[width=0.85\textwidth]{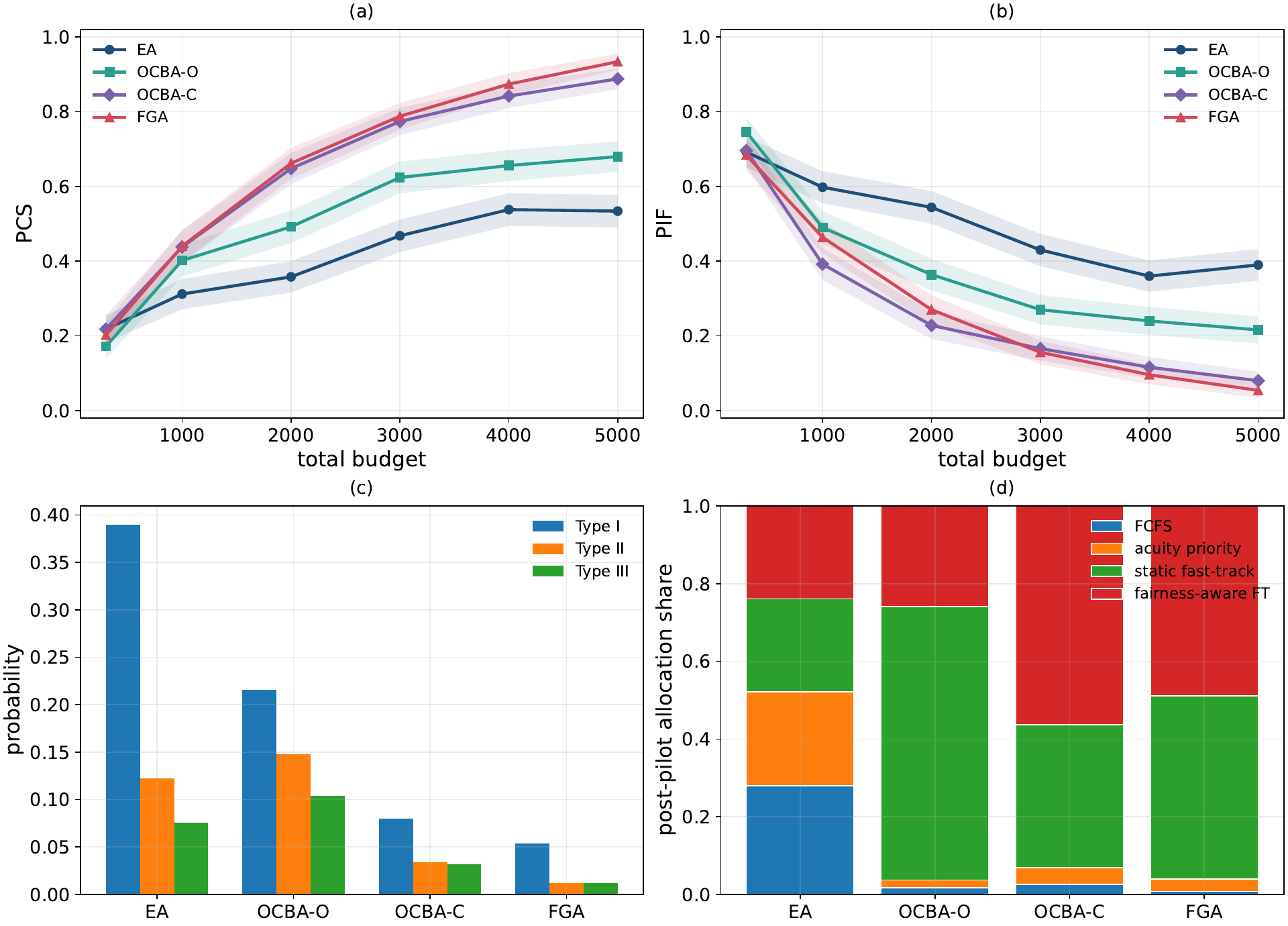}
    \caption{
    Emergency Department Experiment under Mean Waiting Time Performance Parity.
    (a) PCS of the algorithms.
    (b) PIF of the algorithms.
    (c) Error type decomposition across algorithms at \(T=5000\).
    (d) Budget allocation decomposition at \(T=5000\).
    }
    \label{fig:healthcare_mean}
\end{figure}

Figure \ref{fig:healthcare_mean} reports the selection performance under mean performance parity, with the fairness tolerance calibrated to induce the same best fair policy as the upper tail parity case in the main text. Mean performance parity yields higher overall PCS for fixed sampling budget and lower implementation cost than tail-sensitive parity cases, as the mean estimator has smaller influence function variance. This is consistent with our theoretical prediction and implies that operational cost equivalence does not imply implementation cost equivalence.

Panel~(a) and Panel~(b) illustrate that the relative performance ranking of the four algorithms remains unchanged. FGA achieves the highest PCS and lowest PIF across all budget levels, followed by OCBA-C.
Panel~(c) decomposes the selection errors at \(T=5000\). For EA and OCBA-O, errors are dominated by Type~I errors, reflecting frequent false selection of unfair policies. OCBA-C and FGA mitigate this error type, yet Type~I error remains its largest error component relative to Type~II and Type~III errors. FGA achieves balanced reductions across all three error categories and yields the lowest error probability for each type, resulting in the best overall selection performance.
Panel~(d) explains these performance differences through the decomposition of the allocated sampling budget. EA serves as the proportional baseline, where the budget share of each policy class is strictly determined by the number of policies in that category. Both FGA and OCBA-C allocate substantial sampling effort to fairness-aware fast-track policies as well as static fast-track policies. By contrast, OCBA-O concentrates heavily on static fast-track policies alone. These allocation patterns further illustrate that explicitly targeting difficult fairness verification comparisons improves both fairness compliance and finite-budget PCS in the emergency department setting.

For 90th quantile of waiting time parity, we set fairness tolerance \(\kappa=28\).

\begin{figure}[htbp]
    \centering
    \includegraphics[width=0.85\textwidth]{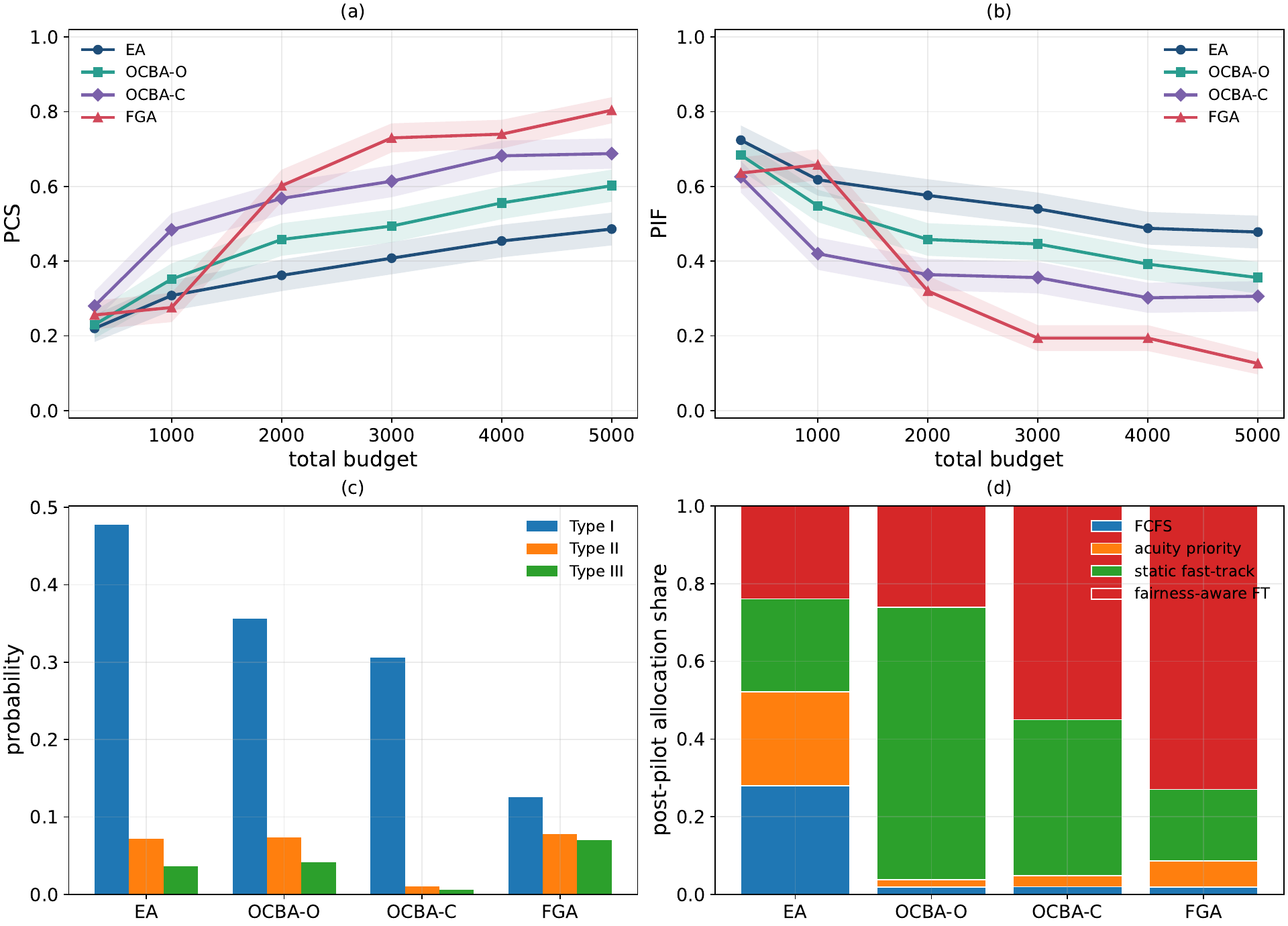}
    \caption{
    Emergency Department Experiment under 90th Quantile of Waiting Time Parity.
    (a) PCS of the algorithms.
    (b) PIF of the algorithms.
    (c) Error type decomposition across algorithms at \(T=5000\).
    (d) Budget allocation decomposition at \(T=5000\).
    }
    \label{fig:healthcare_quantile}
\end{figure}

Figure~\ref{fig:healthcare_quantile} presents the emergency department experiment results under high quantile parity. 

Panel~(a) shows that FGA consistently achieves the highest PCS across all sampling budget levels, followed by OCBA-C. EA and OCBA-O improve more slowly and remain less reliable even at \(T=5000\). 
Compared with the mean performance parity setting, the overall PCS levels are lower for all four algorithms, which shows that the quantile-based fairness specification incurs a higher implementation cost, even though the specifications have the same operational cost.
Panel~(b) reports that FGA yields the steepest decline in PIF. By contrast, EA and OCBA-O have slowly declining elevated PIF even as the sampling budget grows, as they do not strategically allocate samples to fairness verification comparisons.
Panel~(c) decomposes the selection errors at \(T=5000\). For EA and OCBA-O, errors are overwhelmingly dominated by Type~I errors, reflecting frequent false selection of high-performing but unfair policies. OCBA-C and FGA substantially reduce Type~I errors, but it remains the largest error component relative to Type~II and Type~III errors. FGA achieves the lowest Type~I error probability, while OCBA-C achieves the lowest Type~II and Type~III error probability.
Panel~(d) illustrates the post-pilot budget allocation decomposition at \(T=5000\). EA serves as the proportional baseline, where the budget share of each policy class is strictly determined by the number of policies in that category. OCBA-O concentrates most of its sampling budget on static fast-track policies. In comparison, both OCBA-C and FGA allocate a much larger share of budget to fairness-aware fast-track policies.

Results for mean performance parity and high-quantile parity follow the same qualitative pattern as the upper-tail parity case. Across all three fairness metrics, fairness-guided allocation consistently has better selection performance than other algorithms. The intuition is straightforward: explicitly prioritizing the hardest verification comparisons puts sample budget where it matters most, which directly boosts selection accuracy when sampling budgets are finite.

\end{document}